\newtheorem*{rep@theorem}{\rep@title}
\newcommand{\newreptheorem}[2]{%
\newenvironment{rep#1}[1]{%
 \def\rep@title{#2 \ref{##1}}%
 \begin{rep@theorem}}%
 {\end{rep@theorem}}}
\newtheorem{intro_thm}{Theorem}
\newtheorem{lemma}{Lemma}[section]
\newtheorem{thm}[lemma]{Theorem} 
\newtheorem{prop}[lemma]{Proposition}
\theoremstyle{definition}
\newtheorem{defn}[lemma]{Definition}
\newtheorem{es}[lemma]{Example}
\theoremstyle{remark}
\newtheorem{oss}[lemma]{Remark}
\newtheorem{nota}[lemma]{Notation}
\newtheoremstyle{TheoremNum}
        {0.2 cm}{0.2 cm}              
        {\itshape}                      
        {}                              
        {}                     
        {.}                             
        { }                             
        {\thmname{\bfseries #1}\thmnote{ \bfseries #3}}
    \theoremstyle{TheoremNum}
\newtheorem{rec_thm}{Theorem}
\newtheorem{claim*}[rec_thm]{Claim}
\newcommand\matR{{\mathbb{R}}}
\newcommand{\id}{\mathrm{id}}
\newcommand\calB{{\mathcal B}}
\newcommand\calG{{\mathcal G}}
\newcommand\calH{{\mathcal H}}
\newcommand{\Linf}{\text{L}^{\infty}}
\newcommand{\Ball}{\text{Ball}}
\begin{document}

\title[Boundaries of ergodic groupoids]{Boundaries and equivariant maps for ergodic groupoids}

\author[Filippo Sarti]{Filippo Sarti}
\address{Department of Mathematics, University of Pisa, Italy}
\email{filippo.sarti@dm.unipi.it}

\author[Alessio Savini]{Alessio Savini}
\address{Department of Mathematics, University of Milano Bicocca, Italy}
\email{alessio.savini@unimib.it}

\date{\today.\ \copyright{\ F. Sarti, A. Savini}.}

\begin{abstract}

We give a notion of boundary pair $(\mathcal{B}_-,\mathcal{B}_+)$ for measured groupoids which generalizes the one introduced by Bader and Furman \cite{BF14} for locally compact groups. In the case of a semidirect groupoid $\mathcal{G}=\Gamma \ltimes X$ obtained by a probability measure preserving action $\Gamma \curvearrowright X$ of a locally compact group, we show that a boundary pair is exactly $(B_- \times X, B_+ \times X)$, where $(B_-,B_+)$ is a boundary pair for $\Gamma$. For any measured groupoid $(\mathcal{G},\nu)$, we prove that the Poisson boundaries associated to the Markov operators generated by a probability measure equivalent to $\nu$ provide other examples of our definition. 
  
 Following Bader and Furman \cite{BF:Unpub}, we define algebraic representability for an ergodic groupoid $(\mathcal{G},\nu)$. In this way, given any measurable representation $\rho:\mathcal{G} \rightarrow H$ into the $\kappa$-points of an algebraic $\kappa$-group $\mathbf{H}$, we obtain $\rho$-equivariant maps $\mathcal{B}_\pm \rightarrow H/L_\pm$, where $L_\pm=\mathbf{L}_\pm(\kappa)$ for some $\kappa$-subgroups $\mathbf{L}_\pm<\mathbf{H}$. In the particular case when $\kappa=\mathbb{R}$ and $\rho$ is Zariski dense, we show that $L_\pm$ must be minimal parabolic subgroups. 

  \end{abstract}
  
  \maketitle

  \section{Introduction}

  \subsection*{Motivational background}
  
  The circle, which is the topological boundary of the disk, becomes particularly relevant in the study of harmonic functions in virtue of the well-known \emph{Poisson formula}. Starting from this elementary observation, the notion of boundary has been generalized in many different ways leading to an independent field of research. Roughly speaking, \emph{boundary theory} refers to the study of asymptotic properties of either groups or spaces through the introduction of suitable compactifications with boundaries. Beyond their applications in dynamics, harmonic analysis and geometric group theory, boundaries played a prominent role in \emph{rigidity theory}. Just to mention some milestone results, Mostow Rigidity \cite{mostow68:articolo,Most73}, Margulis-Zimmer Superrigidity \cite{margulis:super,zimmer:annals} and rigidity of maximal representations \cite{iozzi02:articolo,BIW1,bucher2:articolo,Pozzetti,BBIborel} all rely on the existence of certain equivariant maps between boundaries. 
  
  There exist several different notions of boundary in literature. A first example is the \emph{Furstenberg-Poisson boundary} \cite{furstenberg:annals} associated to a random walk on a locally compact group. In virtue of its probabilistic nature, the Poisson boundary is usually hard to realize in a concrete way. Remarkably, when $G$ is a Lie group of non-compact type, the Poisson boundary boils down to the generalized flag space $G/P$, where $P$ is any minimal parabolic subgroup. From the dynamical point of view, Poisson boundaries are both \emph{doubly ergodic} \cite{Kaim} and \emph{amenable} in the sense of Zimmer \cite{zimmer78}. Burger and Monod \cite{burger2:articolo} defined a \emph{strong boundary} as an amenable space which is doubly ergodic when we consider a separable coefficient module as target. Strong boundaries are relevant to compute explicitly bounded cohomology, in particular in low degrees, where no coboundaries appear. Bader and Furman \cite{BF14} generalized both Poisson boundaries and strong boundaries. Their definition of \emph{boundary pair} relies on a weakened version of ergodicity with respect to certain fiberwise isometric actions and it revealed crucial in several contexts, such as the theory of algebraic representability of ergodic actions \cite{bader:duchesne:lecureux:17,bader:furman:compositio} or the study of $\mathrm{CAT}(0)$-spaces with finite telescopic dimension \cite{duchesne:13,duchesne:lecureux:pozzetti:18}.
  
  More recently, the authors \cite{sarti:savini:3,sarti:savini, sarti:savini:2} exploited Bader-Furman boundaries to study rigidity of measurable cocycles. The latter can be viewed as measurable representations of the groupoid associated to a probability measure (class) preserving action. The starting point of our investigation was the work by Burger and Iozzi \cite{burger:articolo}, which allows to realize explicitly numerical invariants coming from bounded cohomology in terms of measurable equivariant maps between boundaries. It is worth noticing that measurable cocycles naturally live in a measurable framework, whereas continuous bounded cohomology heavily relies on the topological properties of the involved groups. Triggered by this discrepancy, in a recent work \cite{sarti:savini:23} the authors set the foundational framework of a new cohomological theory for measured groupoids which provides a more natural description of numerical invariants for cocycles. 
  
  In the attempt to extend the deep connection between boundaries and bounded cohomology beyond locally compact groups, in the present paper we introduce a notion of boundary pair for measured groupoids which mimics the one given by Bader and Furman. We also relate our definition with another notion of boundary for groupoids, namely the Poisson boundary studied by Kaimanovich \cite{Kai05}. We conclude by investigating the existence of equivariant maps for particular groupoid homomorphisms, with the hope that they will reveal relevant in cohomological computations. 
  
  \subsection*{Boundaries for measured groupoids}
  
  A groupoid is a small category where all the morphisms are invertible. More concretely, it is a generalization of the concept of group where not every pair of elements can be composed. A \emph{measured groupoid} is obtained by requiring in addition the existence of a measure which behaves well both under inversion and under left translations (Definition \ref{definition_measured_groupoid}). Classic examples are locally compact groups with the Haar measure, probability measure class preserving actions or measured standard Borel equivalence relations. 
  
  Inspired by Bader-Furman \cite[Definition 2.3]{BF14}, we give a definition of boundary pair for a measured groupoid. Our notion relies on the concepts of \emph{relative isometric ergodicity} (Definition \ref{definition:relative:isometric:ergodicity}) and \emph{amenability} (Definition \ref{definition:amenability}), where we must intend both in the groupoid context.  Relative metric ergodicity, introduced by Bader and Furman \cite{BF14}, is a weakened version of double ergodicity with respect to certain fiberwise isometric actions. Amenability for groupoids is a generalization of Zimmer amenability for measure class preserving actions \cite{zimmer78}. It has several equivalent formulations: for instance, it can be given in terms of the existence of an \emph{equivariant mean} \cite[Definition 3.1.4]{delaroche:renault:libro}, or through the \emph{fixed point property} for fiber affine actions \cite[Definition 4.2.6]{delaroche:renault:libro}. 
  
  A \emph{boundary pair} of a measured groupoid $\mathcal{G}$ is simply a pair of $\mathcal{G}$-spaces $(\mathcal{B}_-,\mathcal{B}_+)$ (Definition \ref{definition_left_space}) such that the semidirect groupoids $\mathcal{B}_-\rtimes \mathcal{G}$ and $\mathcal{B}_+\rtimes \mathcal{G}$ are amenable and the projections $\mathcal{B}_-*\mathcal{B}_+\rightarrow \mathcal{B}_{\pm}$ are relatively isometrically ergodic (Definition \ref{definition_G-boundary}). Here $\mathcal{B}_- \ast \mathcal{B}_+$ is the fiber product with respect to the target maps on the unit space of $\calG$. If it holds $\mathcal{B}_-=\mathcal{B}_+$, we simply refer to a boundary for the groupoid. It is clear that our definition extends the one by Bader and Furman. A more interesting example is given in the case of action groupoids, namely groupoids associated to a probability measure preserving action. 
  
  \begin{intro_thm}\label{theorem_semidirect}
    Let $\Gamma$ be a locally compact second countable group and consider
    a probability measure preserving action $\Gamma\curvearrowright X$ on a
    standard Borel probability space $(X,\mu)$. Let $\mathcal{G}=\Gamma \ltimes X$ be the semidirect groupoid determined by the $\Gamma$-action on $X$. 
    If $((B_-,\beta_-),(B_+,\beta_+))$ is a boundary pair for $\Gamma$,
    then $(B_-\times X,B_+\times X)$ endowed with the product measures
    $(\theta_-,\theta_+)\coloneqq (\beta_-\otimes \mu,\beta_+\otimes \mu)$ is a boundary pair for $\mathcal{G}$. 

\noindent In particular, if $(B,\beta)$ is a $\Gamma$-boundary, then $(B \times X, \beta \otimes \mu)$ is a $\mathcal{G}$-boundary. 
  \end{intro_thm}
  
  The amenability of the semidirect groupoids $\mathcal{B}_\pm\rtimes \mathcal{G}$ follows immediately by the one of the actions $\Gamma\curvearrowright B_{\pm}$. In a similar way, relative metric ergodicity is deduced from the one of the projections $B_-\times B_+\rightarrow B_\pm$, following the line of \cite[Theorem 1]{sarti:savini:3}.
  
  The second non-trivial example is the Poisson boundary of an invariant Markov operator. An \emph{invariant Markov chain} on a measured groupoid $\calG$ is an assignment of a measure to each morphism which is invariant with respect to the natural left action of $\calG$ by pushforward. Given an invariant Markov chain, we can define by integration an \emph{invariant Markov operator} on the space of essentially bounded functions on $\calG$ with respect to some initial distribution. In this way, it is possible to construct a \emph{Markov measure} on the space of \emph{forward trajectories} on $\calG$ (Section \ref{subsec:markov}). The space of ergodic components with respect to the action of the time shift on the space of forward trajectories is called \emph{Poisson boundary} associated to the invariant Markov operator (see Kaimanovich \cite{Kaimanovich1992,Kai05} for more details).
  
  Given a locally compact group $\Gamma$ and a spread out probability measure $\pi$ on $\Gamma$, we denote by $\check{\pi}$ the direct image of $\pi$ through the inversion map. Then the pair of Poisson boundaries $(B,\check{B})$ associated to the random walks generated by $(\pi,\check{\pi})$, respectively, define a boundary pair in the sense of Bader-Furman \cite[Theorem 2.7]{BF14}. If $\mathcal{B}$ is the Poisson boundary of an invariant Markov operator on a measured groupoid $\calG$, Kaimanovich \cite{Kai05} proved that the \emph{Poisson bundle} $\mathcal{B}\rtimes \mathcal{G}$ is an amenable groupoid. If $\pi$ is a probability measure equivalent to the integrated Haar system on $\mathcal{G}$, we still denote by $\check{\pi}$ the inverse of $\pi$, as in the case of groups. In this paper we show that the pair $(\check{\mathcal{B}},\mathcal{B})$  of Poisson boundaries associated to the Markov operators generated by $(\check{\pi},\pi)$, respectively, is a boundary pair in our sense (Section \ref{subsec:gboundary}). More precisely, we show that the projections $\check{\mathcal{B}} \ast \mathcal{B} \rightarrow \check{\mathcal{B}}$ and $\check{\mathcal{B}} \ast \mathcal{B} \rightarrow \mathcal{B}$ are relatively isometrically ergodic. 
   
  \begin{intro_thm}\label{theorem_poisson_boundary}
    Let $(\calG,\nu)$ be a measured groupoid with unit space $(X,\mu)$ and let $\pi$ be a probability measure equivalent to $\nu$ such that $t_*\pi=s_*\pi=\mu$, where $t$ and $s$ are the target and the source on $\mathcal{G}$, respectively. Denote by $\check{\pi}$ the direct image of $\pi$ under the inverse map. The pair $((\mathcal{B},\theta_{\mu}),(\check{\mathcal{B}},\check{\theta}_{\mu}))$ of Poisson boundaries associated to the Markov operators generated by $\pi$ and $\check{\pi}$, respectively, with initial distribution $\mu$ is a boundary pair for $\mathcal{G}$.

\noindent In particular, if $\pi$ is symmetric, namely $\pi=\check{\pi}$, then $(\mathcal{B},\theta_{\mu})$ is a $\mathcal{G}$-boundary.
  \end{intro_thm}
  
  Our proof heavily relies on the strategy exploited by Bader and Furman \cite{BF14}. In a preliminary result (Proposition \ref{prop:stationary}) we show a \emph{stationarity} property which generalizes the one already known for groups. This will be crucial to construct by hand the lift required by relative isometric ergodicity. 
  
  \subsection*{Algebraic representability of ergodic groupoids and equivariant maps}
  It is well known that the Zariski closure of the image of a group homomorphism into an algebraic group is still a subgroup of the target. It should be clear that for a groupoid homomorphism, this cannot be true any longer. In the context of measurable cocycles with algebraic targets, Zimmer \cite{zimmer:libro} defined the notion of \emph{algebraic hull} as the smallest algebraic group containing the image of any representative in a fixed cohomology class. Following the line traced by Zimmer, Bader and Furman \cite{BF:Unpub} introduced the concept of \emph{algebraic representability} of an ergodic action. Given a measurable cocycle with an algebraic target, they were able to translate a generic ergodic action on a standard Borel space into some algebraic action on certain quasi-projective quotients. The latter is a crucial step both in the study of equivariant maps \cite{BF14,bader:duchesne:lecureux:17} and in some rigidity statement relying on the existence of suitable invariant projective measures \cite{BFMS21,BFMS:Unpub,Sav:proj}. 
  
  Inspired by the relevance of algebraic representability, we generalize it for any ergodic groupoid. In this context, the action of a locally compact group on a standard Borel space is substituted by the action of the groupoid on its unit space (Theorem \ref{teor:initial:object}). In this way, we are able to prove an existence result for measurable equivariant maps from the boundaries of a measured groupoid. 
  
  \begin{intro_thm}\label{thm_equivariant_map}
    Let $\rho:\mathcal{G} \rightarrow H$ be any measurable representation of an ergodic groupoid into the $\kappa$-points of an algebraic $\kappa$-group $\mathbf{H}$. Let $((\mathcal{B}_-,\theta_-),(\mathcal{B}_+,\theta_+))$ be a boundary pair for $\calG$. Then there exist $\kappa$-subgroups $\mathbf{L}_\pm<\mathbf{H}$ and measurable $\rho$-equivariant maps $\mathcal{B}_{\pm} \rightarrow H/L_\pm$, where $L_\pm=\mathbf{L}_\pm(\kappa)$. 
    \end{intro_thm}
  
  The main point in the proof of the previous theorem is to show that $\mathcal{B}_\pm \rtimes \calG$ are ergodic groupoids, but this is a direct consequence of relative isometric ergodicity (Proposition \ref{proposition_properties}). 

In the particular case $\kappa=\mathbb{R}$, it is natural to ask under which conditions the equivariant maps given by Theorem \ref{thm_equivariant_map} are actually \emph{Furstenberg maps}, namely when the subgroups $L_\pm$ are \emph{minimal parabolic} subgroups. For both group representations \cite{BF14} and measurable cocycles \cite{sarti:savini:3} it is sufficient to require \emph{Zariski density}. We will extend the previous results to the setting of ergodic groupoids. 
 
  \begin{intro_thm}\label{theorem_furstenberg_map}
Let $\rho: \mathcal{G}\rightarrow H$ be a measurable Zariski dense representation of an ergodic groupoid $(\calG,\nu)$ into the real points $\text{H}=\textbf{\textup{H}}(\mathbb{R})$ of a real algebraic group. Let $((\mathcal{B}_-,\theta_-),(\mathcal{B}_+,\theta_+))$ be a boundary pair for $\mathcal{G}$. Then there exist $\mathcal{G}$-equivariant measurable maps $\mathcal{B}_\pm \rightarrow H/P_\pm$, where $P_\pm<H$ are minimal parabolic subgroups.

\noindent In particular if $(\mathcal{B},\theta)$ is a $\mathcal{G}$-boundary, then there exists an equivariant measurable map $\mathcal{B} \rightarrow H/P$.
  \end{intro_thm}
  
Given equivariant maps $\phi:\mathcal{B}_\pm \rightarrow H/L_\pm$, for any unit $x \in X$, the $x$-\emph{slice} of $\phi_\pm$ is the restriction of $\phi_\pm$ to the fiber of the target on $\mathcal{B}_\pm$. Under the assumption of Zariski density, we start showing that the slices of an equivariant map are essentially Zariski dense (Lemma \ref{lemma:Zarski:density:slices}). This fact, combined with both relative metric ergodicity and algebraic representability, allows to conclude that $L_\pm$ are indeed minimal parabolic subgroups.

  \subsection*{Plan of the paper} In Section \ref{sec:groupoids} we introduce the basic aspects about groupoids. We recall their algebraic definition, the notion of morphism and similarity, then we move to the measured setting. We conclude by characterizing ergodic groupoids. In Section \ref{sec:strong:boundaries} we give the definition of boundary pair for a measured groupoid. The first non-trivial example is given for a locally compact group $\Gamma$ acting in measure preserving way on a probability space $X$: in that case the boundary pair of the groupoid is the boundary pair of $\Gamma$ multiplied by $X$ (Theorem \ref{theorem_semidirect}). Another relevant example of boundary pair is given by the Poisson boundaries associated to the Markov operators generated by a quasi-invariant probability measure on a groupoid. We start Section \ref{sec:poisson} by recalling the construction of the Poisson boundary associated to a Markov operator. Then we prove that the measure on the boundary is stationary with respect to the groupoid action. Finally we conclude the section by showing Theorem \ref{theorem_poisson_boundary}. Section \ref{section_algebraic_representability} is devoted to algebraic representability of ergodic groupoids: after some some technical results, we introduce the notion of algebraic representability. The latter is crucial to prove the existence of equivariant maps for boundary pairs (Theorem \ref{thm_equivariant_map}) . Under the assumption of Zariski density, those maps land in the Furstenberg boundary of the target group (Theorem \ref{theorem_furstenberg_map}). 
  
  \subsection*{Acknowledgements} We would like to thank Bruno Duchesne, Camille Horbez and Jean L\'ecureux for useful conversations during the visiting of the first author to the Laboratoire de Math\'ematiques d'Orsay, funded by the ERC StG project \emph{``Artin groups, mapping class groups and $Out(F_n)$: from geometry to operator algebras via measure equivalence"} (Artin-Out-ME-OA, Grant 101040507).\\
  We are also grateful to Uri Bader, Vadim Kaimanovich and Jean L\'ecureux for being interested in the preliminary version of this paper and for their suggestions which increased the quality of the manuscript.  The first author is funded by the European Union - NextGenerationEU under the National Recovery and Resilience Plan (PNRR) - Mission 4 Education and research - Component 2 From research to business - Investment 1.1 Notice Prin 2022 - DD N. 104 del 2/2/2022, from title \emph{``Geometry and topology of manifolds"}, proposal code 2022NMPLT8 - CUP J53D23003820001
  and partially supported by INdAM--GNSAGA Project CUP E55F22000270001.

\section{Preliminaries}\label{sec:groupoids}

In this first part we recall some necessary material that we will need throughout the paper. We start with a brief overview about groupoids, with particular attention on measured ones. 
Before going into the measured framework, we first recall some basic notions about groupoids as purely algebraic structures. 
We essentially adopt the same notation as in \cite{sarti:savini:23}, but for more details we refer either to Muhly \cite{muhly} or to Anantharaman-Delaroche and Renault \cite{delaroche:renault}.

\begin{defn}\label{def_groupoid}
  A \emph{groupoid} is a small category whose morphisms are invertible.
  \end{defn}

A groupoid $\mathcal{G}$ is determined by both its set of objects (or its \emph{unit space}), denoted by $\mathcal{G}^{(0)}$,
and by its set of morphisms that we denote again by $\mathcal{G}$ with a slight abuse of notation.
The natural \emph{source} and \emph{target} maps
are denoted by $s:\mathcal{G}\rightarrow \mathcal{G}^{(0)}, s(g):=g^{-1}g$ and $t:\mathcal{G}\rightarrow \mathcal{G}^{(0)}, t(g)=gg^{-1}$, respectively. 
The set of \emph{composable pairs} is 
$$\calG^{[2]}\coloneqq \{ (g_1,g_2)\in \mathcal{G}\,|\, t(g_2)=s(g_1)\}\subset \mathcal{G}\times \mathcal{G}\,$$
and the \emph{composition map} is
$$\calG^{[2]}\rightarrow \mathcal{G}\,,\;\;\; (g_1,g_2)\mapsto g_1g_2,$$
where we understand that 
$s(g_1g_2)=s(g_2)$ and $t(g_1g_2)=t(g_1)$.
Any element $g\in \mathcal{G}$ admits an \emph{inverse} $g^{-1}$ that satisfies the following cancellation rules
$$g^{-1}gh=h, \,\;\;\; kgg^{-1}=k,$$
for every $h,k\in \mathcal{G}$ with $t(h)=s(g)$ and $t(g)=s(k)$.
For any $x\in \mathcal{G}^{(0)}$, the fiber with respect to 
the source (respectively to the target) is denoted by $\mathcal{G}_x$ (respectively by $\mathcal{G}^x$).

\begin{es}\label{example_groupoids}
   We list some basic examples of groupoids.
\begin{itemize}
  \item[(i)] Any group $G$ is a groupoid, with unit space $G^{(0)}=\{1_G\}$. Both the source and the target maps are trivial.
  \item[(ii)] An equivalence relation $\mathcal{R}\subset X\times X$ on a set $X$ has an associated groupoid, denoted by $\mathcal{G}_{\mathcal{R}}$. Precisely, the unit space is $\mathcal{G}_{\mathcal{R}}^{(0)}=X$ and the set of morphisms is $\mathcal{G}_{\mathcal{R}}=\mathcal{R}$. Here the source and the target maps are 
  $s(y,x)=x$ and $ t(y,x)=y$, respectively, and the composition of two elements 
  $(z,y), (y,x)$ is $(z,x)$. 
  Finally, the inverse of the element $(y,x)$ is $(x,y)$.
  \item[(iii)] A (left) group action $G\curvearrowright X$ gives rise to the \emph{action groupoid} (or \emph{semidirect groupoid}) $G\ltimes X$, whose set of units is $
  (G\ltimes X)^{(0)}= X$ and whose set of composable pairs is
  $$(G\ltimes X)^{[2]}=\{((h,y),(g,x))\in (G\times X)^2\, | \,  gx=y\}.$$
  The composition is given by the formula 
  \begin{equation*}
    (h,gx)(g,x)=(hg,x),
  \end{equation*}
  and the inverse is defined as
  \begin{equation*}
    (g,x)^{-1}=(g^{-1},gx).
  \end{equation*} 
  The target map is $t(g,x)=gx$ and the source map is $s(g,x)=x$.
  We notice that right-action groupoids can be similarly defined.
  \item[(iv)] Let $\calG$ be a groupoid and let $S$ be a set with a surjection $t_S:S\rightarrow \calG^{(0)}$.  If we define 
 $$\calG * S\coloneqq \{ (g,s)\in \calG\times S\, | \, s_{\calG}(g)=t_{S}(s)  \},$$
where $s_{\calG}$ is the source on $\calG$, we say that $\calG$ \emph{acts  on the left} on $S$ (or that $S$ is a \emph{left} $\mathcal{G}$-\emph{space})
if there exists a map $$\calG*S\rightarrow S,\;\;\;\; (g,s)\mapsto gs $$
 satisfying the following conditions:
 \begin{itemize}
 \item $(gh)s=g(hs)$, whenever $(gh,s)\in \calG* S$ and $(g,hs) \in \calG* S $;
 \item $t_S(gs)=t_{\calG}(g)$, with $t_{\calG}$ target on $\calG$, whenever $(g,s) \in \calG* S$;
 \item $gg^{-1}s =g^{-1} gs=s$, whenever $(g,s)\in \calG*S$.
 \end{itemize}
Such a left action gives rise to a groupoid denoted by 
$$
S \rtimes \mathcal{G}:= S \ast \mathcal{G}=\{ (s,g) \in S \times \mathcal{G} \ | \ t_{S}(s)=t_{\mathcal{G}}(g) \}
$$
and called again \emph{semidirect groupoid}. The set of composable pairs is given by 
$$
(S\rtimes \mathcal{G})^{[2]}:=\{ ((s_1,g_1),(s_2,g_2)) \in (S \rtimes \mathcal{G})^2 \ | \ s_2=g_1^{-1}s_1\}
$$
and the composition map is $(s_1,g_1)(g^{-1}_1 s_1,g_2)=(s_1,g_1g_2)$. The inverse of the element $(s,g)$ is given by $(g^{-1}s,g^{-1})$. In this way one can see that the source and the target of the element $(s,g)$ are $g^{-1}s$ and $s$, respectively. 
\end{itemize}
\end{es}

\begin{oss}
It should be clear that also $\mathcal{G} \ast S$ admits a natural structure of groupoid, denoted by $\mathcal{G} \ltimes S$, extending the one given by Example \ref{example_groupoids} (iii). Nevertheless, when we are going to introduce the notion of Haar system with respect to the target map, we will see that, given a Haar system on $\mathcal{G}$, it is easier to define a Haar system for $S \rtimes \mathcal{G}$ rather than for $\mathcal{G} \ltimes S$. 
\end{oss}

Maps between groupoids can be defined, from the categorical perspective, as functors between two small categories. 
However, for our purposes, we prefer to give an equivalent but more practical definition, which justifies why those functions generalize groups homomorphisms.
\begin{defn}\label{definition_homomorphism_homotopy_similarity}
A \emph{homomorphism} between two groupoids $\calG$ and $\calH$ is a set-theoretic
map $f:\calG\rightarrow \calH$ such that if $(g,h)\in \calG^{[2]} $ then $(f(g),f(h))\in \calH^{[2]}$ and it holds that
$$f(gh)=f(g)f(h)\,.$$
An invertible groupoid homomorphism is a groupoid \emph{isomorphism}. Whenever $\calG$ and $\calH$ are related by an isomorphism, we say that they are \emph{isomorphic} and we write $\calG\cong \calH$.

Given two homomorphisms $f_0,f_1:\calG\rightarrow \calH$, a \emph{similarity} between $f_0$ to $f_1$ is a map $h:\calG^{(0)}\rightarrow \calH$ such that for every unit $x\in \calG^{(0)}$ one has $s(h(x))=f_0(x)$, $t(h(x))=f_1(x)$ and for every $g\in \calG$ it holds
\begin{equation}\label{equation_homotopy}
h(t(g)) f_0(g)h(s(g))^{-1}= f_1(g)\,.
\end{equation}
If such a similarity exists, we write $f_0\simeq f_1$. The composability of the left-hand side of Equation \eqref{equation_homotopy} descends directly from the definition. 

Two groupoids $\calG,\calH$ are \emph{similar} if there exist homomorphisms 
$f_0:\calG\rightarrow \calH$ and $f_1:\calH\rightarrow \calG$ such that 
$$f_1\circ f_0\simeq \id_{\calG}\,,\;\;\; f_0\circ f_1\simeq \id_{\calH}\,.$$
Here the notation $\mathrm{id}$ refers to the identity map. 
\end{defn}

\begin{es}
  Basic examples of groupoids homomorphisms are groups homomorphisms or, more generally, 1-cocycles.
  Given a group action $\Gamma\curvearrowright X$ and a group $\Lambda$, a \emph{1-cocycle} is a groupoid homomorphism from $\Gamma \ltimes X$ to $\Lambda$, namely a map $f:\Gamma \times X\rightarrow \Lambda$ such that
  $$f(\gamma_1\gamma_2,x)=f(\gamma_1,\gamma_2x)f(\gamma_2,x)$$ for every $\gamma_1,\gamma_2\in \Gamma$ and $x\in X$. 
  Such kind of objects arise naturally in the measurable context, namely as examples of measurable homomorphisms between measured groupoids (see Definition \ref{definition_measured_groupoid}). In this environment they are called \emph{Borel 1-cocycles} or \emph{measurable cocycles}.
\end{es}

Now we want to introduce a suitable structure of measure spaces on groupoids. The first step is the following
\begin{defn}
A \emph{measurable groupoid} (or \emph{Borel groupoid}) is a groupoid endowed with a $\sigma$-algebra such that the composition and the inverse are Borel maps (here $\calG^{[2]}$ is endowed with the Borel structure inherited by $\calG\times \calG$).

\end{defn}

All the notions introduced in Definition \ref{definition_homomorphism_homotopy_similarity} adapts to the the measurable setting by adding the further request that the involved maps are measurable.
From now on, since we will only work in the measurable context, we will tacitly assume that all maps are measurable. Furthermore, unless otherwise mentioned, all measure spaces are assumed to be \emph{standard Borel}.

Once a $\sigma$-algebra on a groupoid is fixed, our next aim is to introduce a good notion of \emph{measure}. 
The source of inspiration comes from the framework of locally compact groups, where we have the Haar measure, which is invariant with respect to the group action. 
In the case of groupoids, the basic fact that any $g\in \mathcal{G}$ defines a 
bijection 
$$\mathcal{G}^{s(g)}\rightarrow \mathcal{G}^{t(g)},\;\;\; h\mapsto gh$$
suggests that, in order to mimic the behaviour of the Haar measure, one should require the existence of a family of measures supported on the $t$-fibers satisfying a suitable invariance property.
\begin{defn}\label{definition_borel_system}
  A \emph{Borel Haar system of measures} for the target map $t:\calG\rightarrow \calG^{(0)}$ of a measurable groupoid $\mathcal{G}$ is a family $\rho=\{\rho^x\}_{x\in \calG^{(0)}}$ of $\sigma$-finite measures on $\mathcal{G}$, where $\rho^x(\mathcal{G}\setminus \calG^x)=0$ for every $x\in X$, and such that
  \begin{itemize}
    \item for every non-negative measurable map $f$ on $\calG$, the function 
    $$x\mapsto \rho^x(f):=\int_{\mathcal{G}} f(g)d\rho^x(g)$$ is measurable;
    \item $\rho$ is \emph{left} $\calG$-\emph{invariant}, namely
 $$ \int_{\calG} f(g h)d \rho^{s(g)} (h)=\int_{\mathcal{G}} f(h) d \rho^{t(g)}(h).$$
The above equation can be rewritten as follows
\begin{equation}\label{equation invariance system target}
    g\rho^{s(g)}=\rho^{t(g)}.
    \end{equation}
  \end{itemize}
  \end{defn}

\begin{oss}\label{rem_disintegrazione_source}
We can define a Borel Haar system of measures with respect to the \emph{source} map in a similar way. In that case, we should require that the system $\{\rho_x\}_{x \in X}$ is \emph{right} $\mathcal{G}$-\emph{invariant}, namely it holds 
$$
\rho_{t(g)}g=\rho_{s(g)}.
$$
In order to pass from a left invariant system to a right invariant one, it is sufficient to exploit inversion. For more details we refer to \cite{delaroche:renault}. 
\end{oss}

\begin{oss}\label{remark_Borel_system}
  The notion of Borel system can be given in the more general setting of a Borel projection $\pi:Y\rightarrow X$ between standard Borel spaces. Precisely, it consists of a family of measures $\{\rho^x\}_{x\in X}$ with $\rho^x(Y\setminus \pi^{-1}(x))=0$ and such that the function
  $$x\mapsto \rho^x(f)$$ is measurable for every non-negative measurable map $f$ on $Y$. 
  Borel systems of projections $Z\rightarrow Y$ and $Y\rightarrow X$ can be composed in order to get a Borel system for the composition $Z\rightarrow Y\rightarrow X$. Moreover,
  if $t_Y:Y\rightarrow X$ and $t_Z:Z\rightarrow X$ are Borel projections between standard Borel spaces endowed with Borel systems $\{\tau^x\}_{x\in X}$ and $\{\theta^x\}_{x\in X}$, respectively, then we denote by 
  $$Y*Z\coloneqq \{(y,z)\in Y\times Z\,|\, t_Y(y)=t_Z(z)\}$$ 
  their fiber product. The collection of product measures $\{\tau^x\otimes\theta^x\}_{x\in X}$ is a Borel system for the obvious projection $Y*Z\rightarrow X$.
\end{oss}

Beyond the invariance with respect to the $\mathcal{G}$-action, the family of measures defining a Borel Haar system must behave well also with respect to the inversion.

  \begin{defn}\label{definition_invariant_borel_system}
    Given a measurable groupoid $\calG$ endowed with a Borel Haar system of measures $\rho=\{\rho^x\}_{x \in \calG^{(0)}}$ for the target map $t$, a measure $\mu$ on $\calG^{(0)}$ is \emph{quasi-invariant with respect to $\rho$} if the composition $\mu \circ \rho$ defined by
    \begin{equation} \label{equation_convolution_measures}
    (\mu \circ \rho)(f)=\int_{\calG^{(0)}}\left( \int_{\calG}  f(g)d\rho^x(g)\right)d\mu(x)
    \end{equation} 
     is equivalent to its direct image under the inverse map $g\mapsto g^{-1}$. In this case we say that $\mu \circ \rho$ is \emph{quasi-symmetric} and it is \emph{symmetric} if it is invariant. 
    \end{defn}
    
    The composition $\mu \circ \rho$ naturally defines a measure on $\calG$ whose class $C$ is \emph{symmetric}, namely such that any representative $\nu \in C$ is equivalent to the direct image $\nu^{-1}$ under the inverse map $g\rightarrow g^{-1}$. 
Moreover, $C$ is also \emph{left invariant}, which means that it contains a probability measure whose \emph{$t$-disintegration} is left quasi-invariant. 
A $t$-disintegration of a probability measure $\nu$ is a measurable map from the unit space $\calG^{(0)}$ to the space of probability measures on $\calG$, sending $x \mapsto \nu^x$, such that $\nu^x(\calG^x)=1$ and 
\begin{equation}\label{equation_isom_disintegration}
  \int_{\calG} f(g)d\nu(g)=\int_{\calG^{(0)}} \left(\int_{\calG} f(g)d\nu^x(g)\right) d\mu(x) \ ,
  \end{equation}
  for any $f \in \mathrm{L}^1(\calG)$.
   In our context, by Hahn disintegration theorem (see for instance Effros \cite[Lemma 4.4]{Eff66} or Hahn \cite[Theorem 2.1]{Hahn}), any probability measure $\nu$ with $t_*\nu=\mu$ can be disintegrated with respect to the target $t:\calG \rightarrow \calG^{(0)}$.
    We say that the $t$-disintegration is \emph{left quasi-invariant} if $g\nu^{s(g)}$ and $\nu^{t(g)}$ define the same measure class, for $\nu$-almost every $g \in \calG$.

    \begin{defn}\label{definition_measured_groupoid}
    A \emph{measured groupoid} is a Borel groupoid $\calG$ endowed with a symmetric and left invariant measure class $C$.
    \end{defn}

    \begin{es}\label{example_groupoid_action_measurable}
      Let $G$ be a locally compact group with left Haar measure $\rho$ and $(X,\mu)$ be a Lebesgue $G$-space, that is a standard Borel probability space on which $G$ acts by preserving the measure class. If we consider the Borel Haar system given by 
$$
\rho^x:=\rho \otimes \delta_x, \ \ \ x \in X,
$$
we can define the composition
$$
\mu \circ \rho=\int_X \rho^x d\mu(x),
$$
which boils down to the product measure $\rho \otimes \mu$. The measure class of $\rho \otimes \mu$ is quasi-symmetric and left invariant, turning the semidirect groupoid $G \ltimes X$ into a measured groupoid.       
      \end{es}

\begin{es}
Suppose that the action map of Example \ref{example_groupoids} (iv) is measurable. As a consequence we have that the semidirect groupoid $S \rtimes \mathcal{G}$ is Borel. 

Furthermore, if $(\rho^x)_{x\in \mathcal{G}^{(0)}}$ is a Borel Haar system for $t_{\mathcal{G}}:\mathcal{G}\rightarrow \mathcal{G}^{(0)}$,  we can lift it to a Borel Haar system $\{\rho^s\}_{s\in S}$ for $t:S\rtimes \mathcal{G}\rightarrow S$ by setting
$$
\rho^s:=\rho^{t_S(s)} \otimes \delta_s, \ \ \ s \in S. 
$$
Given a measure $\tau$ on $S$, we say that 
$\tau$ is \emph{quasi-invariant} with respect to $\{\rho^s \}_{s \in S}$ if the composition $\tau \circ \rho$ defined by 
\begin{equation}\label{equation_measure}
 (\tau \circ \rho)(f)\coloneqq\int_S\left(\int_{S\rtimes \mathcal{G}} f(s,g) d\rho^{t_S(s)}(g) \right)d\tau(s)
\end{equation}
is equivalent to its image under the map 
$(s,g)\mapsto(g^{-1}s,g^{-1})$. Equivalently, $\tau$ is quasi-invariant if $S\rtimes \mathcal{G}$ is a measured groupoid when endowed with the measure class defined by $\tau \circ \rho$ \cite[Definition 3.1.1]{delaroche:renault}. When $(S,\tau)=(\calG^{(0)},\mu)$, this gives us precisely the quasi-invariance of $\mu$.
\end{es}

\begin{defn}\label{definition_left_space}
Let $(\mathcal{G},\nu)$ be a measured groupoid. 
A \emph{Lebesgue} $\mathcal{G}$-\emph{space} is a (left) $\mathcal{G}$-space $S$ with a quasi-invariant measure $\tau$. 
\end{defn}

\begin{es}\label{example_action1}
Let $(\mathcal{G},\nu)$ be a measured groupoid. Then 
$(\mathcal{G},\nu)$ is naturally a Lebesgue $\mathcal{G}$-space, since 
\begin{equation}\label{equation_right_action_groupoid}
  \mathcal{G}^{(2)}\coloneqq \mathcal{G}\rtimes\mathcal{G}=\{(g,h)\in \mathcal{G}^2\,|\, t(g)=t(h)\}
\end{equation}
has a natural structure of measured groupoid inherited by the one of $\mathcal{G}$. More generally, the fiber product
$$\mathcal{G}^{(n)}\coloneqq\mathcal{G}*\cdots *\mathcal{G}=\{(g_1,g_2,\cdots g_n)\in 
\mathcal{G}^n\,|\, t(g_i)  =t(g_j)\}$$
endowed with the measure $\nu^{(n)}=\nu\widehat{\otimes}\cdots \widehat{\otimes} \nu$ (see \cite[Section 4.2]{sarti:savini:23}) is a Lebesgue $\mathcal{G}$-space. 
\end{es}

\begin{oss}
Given a measured groupoid structure on $S \rtimes \mathcal{G}$, a measure class turning the groupoid $\mathcal{G}\ltimes S$ into a measured one is easily obtained by pushing forward the composition $\tau \circ \rho$ of Equation \eqref{equation_measure} with respect to the map
  $$S\rtimes \mathcal{G}\rightarrow\mathcal{G}\ltimes S\,,\;\;\; (s,g)\mapsto (g,g^{-1}s).$$
\end{oss}

\begin{oss}
  Consider two Borel projections $Y\rightarrow X$ and $Z\rightarrow X$ as in Remark \ref{remark_Borel_system}.
  If $\tau$ and $\theta$ are measures on $Y$ and $Z$, respectively, that disintegrate with respect to $t_Y$ and $t_Z$ into Borel systems $\{\tau^x\}_{x\in X}$ and $\{\theta^x\}_{x\in X}$, respectively, then the formula 
  $$\tau\widehat{\otimes} \theta \coloneqq \int_X \tau^x\otimes\theta^x d\mu(x)$$
  defines a measure on $Y*Z$.
  In particular, $X=\mathcal{G}^{(0)}$ is the unit space of a measured groupoid $(\mathcal{G},\nu)$ and $(Y,\tau)$, $(Z,\theta)$ are Lebesgue $\mathcal{G}$-space in the sense of Definition \ref{definition_left_space}, then $(Y*Z,\tau\widehat{\otimes} \theta)$ is also a Lebesgue $\mathcal{G}$-space.
\end{oss}

A peculiar feature of boundaries from the dynamical viewpoint is their amenability. For instance, Poisson boundaries of locally compact groups are amenable spaces with respect to the group action. This property can be expressed using the notion of \emph{amenable action} introduced by Zimmer \cite{zimmer78}.  The next definition is precisely a generalization of this concept for a generic measured groupoid. As it happens for both groups and actions, also in the case of groupoids amenability admits several equivalent formulations. Since we are not going to use it directly, we only recall one possible definition and we refer to the book by Anantharaman-Delaroche and Renault \cite[Chapter 3]{delaroche:renault} for more details.  

\begin{defn}\label{definition:amenability}
A measured groupoid $(\mathcal{G},\nu)$ is \emph{amenable} if there exists a $\mathcal{G}$-invariant positive linear functional
$m:\Linf(\mathcal{G},\nu)\rightarrow \Linf(\mathcal{G}^{(0)},\mu)$ of norm one. 
\end{defn}

We conclude this section by introducing the notion of ergodicity for a measured groupoid. 

\begin{defn}\label{definition_ergodic_groupoid}
  Given a subset $U\subset \mathcal{G}^{(0)}$ of a measured groupoid $(\mathcal{G},\nu)$, its \emph{saturation} is 
  $$\mathcal{G}U\coloneqq \{t(g)\,|\, s(g)\in U\}\subset \mathcal{G}^{(0)}\,.$$
  We say that a Borel set $U\subset \mathcal{G}^{(0)}$ is \emph{invariant} if $\mathcal{G}U=U$.  A measured groupoid $(\mathcal{G},\nu)$ is \emph{ergodic} if any invariant Borel set $U\subset \mathcal{G}^{(0)}$ is either null or co-null.
  \end{defn}

In the sequel we will actually exploit a characterization of ergodicity in terms of measurable invariant functions on the unit space. Given a measured groupoid $(\mathcal{G},\nu)$ and a countably separated Borel space $Y$, a measurable function $f:X=\mathcal{G}^{(0)} \rightarrow Y$ is \emph{quasi invariant} if it satisfies
$$
f(t(g))=f(s(g)),
$$
for almost every $g \in \mathcal{G}$. As noticed by Ramsay \cite{ramsay}, the groupoid $(\mathcal{G},\nu)$ is ergodic if and only if any measurable quasi invariant functions as above are essentially constant. For instance, given a Lebesgue $\mathcal{G}$-space $(S,\tau)$, the semidirect groupoid $S \rtimes \mathcal{G}$ is ergodic if and only if any quasi invariant function $f:S \rightarrow Y$ into a countably separated space is essentially constant. 

The characterization of ergodicity in terms of quasi invariant functions will be crucial to introduce the notion of algebraic representability of ergodic groupoids in Section \ref{section_algebraic_representability}. 

\section{Boundaries for groupoids}\label{sec:strong:boundaries}

In this section we present our notion of boundary pair for a measured groupoid. We are going to mimic the definition given by Bader and Furman \cite{BF14} in the case of locally compact groups. For a semidirect groupoid associated to a probability measure-preserving action, we show that we can obtain a boundary pair multiplying a boundary pair of the acting group with the space of units. 

\begin{nota}
  We fix the following setting:
  \begin{itemize}
  \item $(\calG,\nu)$ is a measured groupoid, 
  where $\nu$ is a quasi-symmetric and left quasi-invariant probability measure. 
  The unit space $X=\calG^{(0)}$ is endowed with a probability measure 
  $\mu$. We denote by $t:\calG\rightarrow X$ the target map and by 
  $(\nu^x)_{x\in X}$ the disintegration of $\nu$ with respect to $t$.
  \item For any Lebesgue $\mathcal{G}$-space $(Y,\tau)$ with $(t_Y)_*\tau=\mu$ and
 any $x\in \mathcal{G}^{(0)}$ we denote by $Y^x$ the fiber over $x$ and by $\{\tau^x\}$ the $t_Y$-decomposition of $\tau$
  \end{itemize}
  \end{nota}

We start with the following:
\begin{defn}
Consider a Lebesgue $\mathcal{G}$-space $(Y,\tau)$ with target $t_Y:Y \rightarrow X$ and a Borel $\mathcal{G}$-space $Z$ with target $t_Z:Z \rightarrow X$.  A \emph{$\mathcal{G}$-equivariant} Borel map $\varphi:Y \rightarrow Z$ is a target preserving Borel map, namely $t_Z \circ \varphi=t_Y$, which satisfies
$\varphi(g y)=g\varphi(y)$ for every $g \in \mathcal{G}$ and $\tau^x$-almost every $y \in Y^{s(g)}$. Two $\mathcal{G}$-equivariant Borel functions will be considered equivalent if they coincide $\tau$-almost everywhere. A \emph{$\mathcal{G}$-map} will be such an equivalence class of $\mathcal{G}$-equivariant Borel maps (but we will always consider a representative of the equivalence class). 

A $\mathcal{G}$-map between Lebesgue $\mathcal{G}$-spaces will be assumed to be measure class preserving.  
\end{defn}

\begin{defn}\label{definition_fiberwise_isometric_action}
Let $Y$ and $Z$ be standard Borel spaces and $q:Y\rightarrow Z$ be a 
Borel map. Then a \emph{metric} on $q$ is the datum of a Borel function
$$d:Y *_q Y\rightarrow \matR_{\geq 0},$$ where 
$Y *_q Y$ is the fiber
 product with respect to $q$ and 
such that 
for every $z\in Z$ the function 
$$d_z\coloneqq d_{q^{-1}(z)\times q^{-1}(z)}: q^{-1}(z)\times q^{-1}(z) \rightarrow \matR_{\geq 0}$$
gives to $q^{-1}(z)$ the structure of separable metric space. 
\end{defn}

Metrics along equivariant functions can be enriched
with isometric actions by groupoids. This concept is formalized 
by the following

\begin{defn}
Let $Y$ and $Z$ be standard Borel spaces and $q:Y\rightarrow Z$ be a 
Borel map with a metric $d$ on it.
A \emph{fiberwise isometric action} of a Borel groupoid $\calG$ on 
$q$ is the datum of Borel $\calG$-actions on $Y$ and $Z$ such that $q$ is $\calG$-equivariant and 
$$d_{gz}(gy_1,gy_2)=d_z(y_1,y_2)$$
for every $g\in \calG$, every $z\in Z^{s(g)}$ and every 
$(y_1,y_2)$ in $Y^{s(g)}\times Y^{s(g)}$.
\end{defn}

Fiberwise isometric actions are the key ingredient to 
introduce a fiber version of ergodicity due to Bader and Furman \cite[Definition 2.1]{BF14}.
\begin{defn}\label{definition:relative:isometric:ergodicity}
Let $(\mathcal{G},\nu)$ be a measured groupoid. 
A $\calG$-equivariant Borel map $p:A\rightarrow B$ between Lebesgue $\calG$-spaces 
$(A,\alpha)$ and $(B,\beta)$ is \emph{relatively isometrically ergodic} 
if for every pair of standard Borel $\mathcal{G}$-spaces $Y$ and $Z$, every
fiberwise 
isometric $\calG$-action on $q:Y\rightarrow Z$ and any $\calG$-equivariant commutative square
of $\mathcal{G}$-maps
\begin{center}
  \begin{tikzcd}
  A\arrow{r}{f}\arrow{d}[swap]{p} & Y\arrow{d}{q} \\
  B\arrow{r}{f_0} \arrow[dotted]{ru}{f_1} & Z,
  \end{tikzcd}
  \end{center}
  there exists a $\calG$-equivariant lift $f_1:B\rightarrow Y$.\\
A Lebesgue $\calG$-space $(A,\alpha)$ is \emph{isometrically ergodic} if for every standard Borel $\mathcal{G}$-space $Y$, any fiberwise isometric $\mathcal{G}$-action along $t_Y:Y\rightarrow X$ and any $\mathcal{G}$-map $f:A\rightarrow Y$, there exists a $\mathcal{G}$-equivariant section $f_1:X\rightarrow Y$ such that the following diagram commutes
\begin{center}
  \begin{tikzcd}
  A\arrow{rr}{f}\arrow{ddr}[swap]{t_A} & &Y\arrow[bend right=20]{ldd}[swap]{t_Y} \\& &\\
   &X \arrow[bend right=20,dotted,swap]{ruu}{f_1}&\,.
  \end{tikzcd}
  \end{center}
\end{defn}

\begin{oss}
Being $\calG$-spaces, all the Borel spaces $A,B,Y,Z$ have natural Borel 
projections onto $X$, namely the above square is the basis of an 
inverted pyramid 
\begin{center}
\begin{tikzcd}
A\arrow{rrr}{f}\arrow{rrd}[swap]{p}\arrow[dotted]{rrddd}[swap]{t_A}&  & & Y\arrow{rrd}{q}\arrow[dotted]{lddd}{t_Y} &&
\\
&& B\arrow{rrr}{f_0}\arrow[dotted]{dd}[swap]{t_B} &&& Z\arrow[dotted]{llldd}[]{t_Z}
\\
&&&&&
\\ 
&& X &&&\,
\end{tikzcd}
\end{center}
where all triangles and all squares commute.
However, for sake of notation we will always omit those projections. 
\end{oss}

We state the following facts about (relative) isometric ergodicity that will be useful in Section \ref{section_algebraic_representability}. They are analogous to the ones listed in \cite[Proposition 2.2]{BF14}.

\begin{prop}\label{proposition_properties}
  \begin{itemize}
    \item[(i)] If $(A,\alpha) \rightarrow (B,\beta)$ and $(B,\beta) \rightarrow (C,\gamma)$ are relatively isometrically ergodic, then the composition $A \rightarrow C$ is relatively isometrically ergodic as well.
    \item[(ii)] Given Lebesgue $\mathcal{G}$-spaces $(A,\alpha)$ and $(B,\beta)$, if one of the projections between $p_B:A*B\rightarrow B$ and $p_B:B * A \rightarrow B$ is relatively isometrically ergodic then $(A,\alpha)$ is isometrically ergodic. 
    \item[(iii)] If a Lebesgue $\mathcal{G}$-space $(A,\alpha)$ is isometrically ergodic and $(\mathcal{G},\nu)$ is ergodic, then the semidirect measured groupoid $A\rtimes \mathcal{G}$ is ergodic. 
  \end{itemize}
\end{prop}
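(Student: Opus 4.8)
The plan is to prove the three statements in order, as each relies on the previous one or on the definitions of relative isometric ergodicity and ergodicity. For part (i), I would take the two relatively isometrically ergodic maps $p:A\to B$ and $r:B\to C$, and an arbitrary fiberwise isometric $\calG$-action on $q:Y\to Z$ together with a commutative square from $A\to C$ to $Y\to Z$. First I would use relative isometric ergodicity of $r:B\to C$ applied to the square with top map obtained by pulling back, but there is a subtlety: the given data is a square based on $A\to C$, not $B\to C$. The correct approach is to first build, from the map $A\to Y$ and $p:A\to B$, an intermediate space. Actually the cleanest route is: apply relative isometric ergodicity of $p:A\to B$ to the square $A\to Y$, $A\xrightarrow{p} B$, $B\to Z$ (the bottom map being the composite $B\to C\to Z$), obtaining a lift $B\to Y$; then one must check this lift is compatible so as to apply relative isometric ergodicity of $r:B\to C$ to produce the final lift $C\to Y$. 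The bookkeeping of which square one feeds into which hypothesis is the main thing to get right, but it is formal once set up correctly, mirroring \cite[Proposition 2.2]{BF14}.

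For part (ii), suppose $p_B:A*B\to B$ is relatively isometrically ergodic (the case $B*A\to B$ is symmetric). Given a standard Borel $\calG$-space $Y$ with a fiberwise isometric action along $t_Y:Y\to X$ and a $\calG$-map $f:A\to Y$, I need an equivariant section $X\to Y$. The idea is to consider the pullback: the map $f\circ p_A : A*B\to Y$ where $p_A:A*B\to A$ is the other projection, and set up the commutative square with $p_B:A*B\to B$ on the left, $q=t_Y:Y\to X$ on the right, and the bottom map $B\to X$ being $t_B$. Relative isometric ergodicity of $p_B$ yields a lift $f_1:B\to Y$ that is $\calG$-equivariant and satisfies $t_Y\circ f_1 = t_B$. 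But a $\calG$-equivariant map $B\to Y$ covering $t_B$, where $Y$ carries a fiberwise isometric action along $t_Y$ and $B$ is an arbitrary Lebesgue $\calG$-space, forces $f_1$ to factor through $X$: indeed the section is constructed so that it does not genuinely depend on the $B$-coordinate. More precisely, one reruns relative isometric ergodicity once more, or observes directly that the lift is $B$-independent because the fiberwise metric on $Y$ is $\calG$-invariant and $B\to X$ has no extra structure — this is exactly the mechanism by which \cite[Proposition 2.2(2)]{BF14} deduces isometric ergodicity of $A$. The hard part here is making precise the claim ``the lift does not depend on $B$''; I expect to argue that two such lifts differ by a fiberwise isometry over $X$ pulled back to $B$, hence the map $B\to$ (space of isometries) is invariant, and one then uses it to descend to $X$.

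For part (iii), assume $(A,\alpha)$ is isometrically ergodic and $(\calG,\nu)$ is ergodic; I must show $A\rtimes\calG$ is ergodic. By the Ramsay-type characterization recalled in Section~\ref{sec:groupoids}, it suffices to show that every quasi-invariant measurable function $f:A\to Z$ into a countably separated Borel space is essentially constant. Embed $Z$ as a Borel subset of a standard Borel space on which one can put a bounded Borel metric $d$ so that $\calG$ acts trivially, hence fiberwise isometrically along the trivial bundle $Z\times X\to X$ (more carefully: equip the target of $f$ with the structure of a Lebesgue $\calG$-space via the trivial action fibered over $X$, which is fiberwise isometric since a trivial action on a metric space is isometric). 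Quasi-invariance of $f$ says $f(ga)=f(a)$ a.e., so $f$ is a $\calG$-map $A\to Z\times X$ (sending $a\mapsto (f(a),t_A(a))$). Isometric ergodicity of $A$ produces a $\calG$-equivariant section $f_1:X\to Z\times X$, i.e. a quasi-invariant measurable function $X\to Z$; ergodicity of $(\calG,\nu)$ then forces $f_1$, hence $f$, to be essentially constant. The one technical point is to phrase the target of an arbitrary countably-separated-valued quasi-invariant function as a fiberwise isometric $\calG$-space, which is routine: any countably separated Borel space Borel-embeds into $[0,1]$, and the trivial action is tautologically fiberwise isometric. This is precisely the argument behind \cite[Proposition 2.2(3)]{BF14}, and I do not anticipate a serious obstacle beyond this packaging.
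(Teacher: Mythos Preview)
Your arguments for (i) and (iii) are correct and match the paper's proof; for (iii) the paper specializes to $Z=\{0,1\}$ (the characteristic function of an invariant set) rather than a general countably separated target, but this is cosmetic.

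For (ii) your setup is right and essentially equivalent to the paper's --- the paper packages the same idea by fibering over $B$ instead of $X$, i.e.\ it applies relative isometric ergodicity of $p_B:A*B\to B$ to the square
\[
\begin{tikzcd}
A*B \arrow{r}{f*\id_B} \arrow{d}[swap]{p_B} & Y*B \arrow{d}{p_B} \\
B \arrow{r}{\id_B} & B,
\end{tikzcd}
\]
but unwinding either diagram gives the same equation. However, the step you flag as ``the hard part'' is not hard, and your proposed route through ``the space of fiberwise isometries'' is a detour. The lift $f_1:B\to Y$ you obtain satisfies $f\circ p_A = f_1\circ p_B$ almost everywhere on $A*B$; in other words $f(a)=f_1(b)$ for $(\alpha\widehat\otimes\beta)$-a.e.\ $(a,b)$ with $t_A(a)=t_B(b)$. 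A Fubini argument immediately gives that for $\alpha$-a.e.\ $a$, the value $f(a)$ equals $f_1(b)$ for $\beta^{t_A(a)}$-a.e.\ $b$, so $f(a)$ depends only on $t_A(a)$. That is exactly the descent to a section $X\to Y$ you need, and no further appeal to isometries or to a second application of relative isometric ergodicity is required. The paper states this step tersely (``$f$ depends actually only on the target''), but that is the whole content.
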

\begin{proof} (i). Given a fiberwise isometric action $U \rightarrow V$, suppose we have the following commutative diagram of $\mathcal{G}$-maps
\begin{center}
  \begin{tikzcd}
  A \arrow{rr}\arrow{d} && U \arrow{dd}\\
  B \arrow{rrd} \arrow{d} && \\
  C \arrow{rr}  && V.
  \end{tikzcd}
  \end{center}
It is sufficient to apply twice relative isometric ergodicity: the first time to the map $A \rightarrow B$ and then to the map $B \rightarrow C$.

(ii). We will give a proof assuming that the projection $p_B:A * B \rightarrow B$ is relatively isometrically ergodic, since the other case is analogous. 

We consider a standard Borel $\mathcal{G}$-space $Y$. Given a fiberwise isometric $\calG$-action along the target $t_Y:Y\rightarrow X$ and a $\mathcal{G}$-map $f:A\rightarrow Y$, we need to show that there exists a $\mathcal{G}$-equivariant section $f_1:X\rightarrow Y$ such that $f=f_1 \circ t_A$, where $t_A$ is the target on $A$. 
To this end it is sufficient to consider the fiber product with $B$ on the right. More precisely, if we consider the following commutative diagram
\begin{center}
  \begin{tikzcd}
  A \ast B\arrow{rr}{f \ast \mathrm{id}_B}\arrow{d}[swap]{p_B} && Y \ast B \arrow{d}{p_B} \\
  B\arrow[swap]{rr}{\mathrm{id}_B}  \arrow[dotted]{rru}{f_1} && B,
  \end{tikzcd}
  \end{center}
since $p_B$ is relatively metrically ergodic, we get a function $f_1:B \rightarrow Y \ast B$ telling us that $f$ depends actually only on the target, namely $f(a)=f(t_A(a))$. Thus it is sufficient to define $f_1:X \rightarrow Y$ by $f_1(x):=f(a)$ for $a \in t_A^{-1}(x)$, via the universal property of quotients. 

  (iii). Let $U\subset A$ be a $A\rtimes \mathcal{G}$-invariant Borel subset and consider the map $$f_U:A\rightarrow X\times \{0,1\}\,,\;\;\; f_U\coloneqq t_A\times \mathbbm{1}_U$$
  where $X\times \{0,1\}$ is endowed with the $\mathcal{G}$-action $g(s(g),-)=(t(g),-)$.
  Since $f_U$ is $\mathcal{G}$-equivariant, by isometric ergodicity of $A$ there exists a $\mathcal{G}$-invariant measurable section $X\rightarrow X\times \{0,1\}$ of the projection on the first factor, as shown in the following diagram 
  \begin{center}
    \begin{tikzcd}
    A\arrow{rr}{f_U}\arrow{ddr}[swap]{t_A} & &X\times \{0,1\}\arrow[bend right=20]{ldd}[swap]{} \arrow{rr} && \{0,1\} \\
& &\\
     &X \arrow[bend right=20,dotted,swap]{ruu}{}&\,.
    \end{tikzcd}
    \end{center}
Equivalently, the characteristic function $\mathbbm{1}_U$ depends only on the target of an element $a \in A$. By composing the dotted section with the projection on $\{0,1\}$, we obtain a $\mathcal{G}$-invariant measurable map 
  $$\widetilde{f}_U:X\rightarrow \{0,1\} ,$$
which must be essentially constant by the ergodicity of $\mathcal{G}$. This concludes the proof.
\end{proof}


Relative isometric ergodicity is the key concept that we needed to introduce boundary pairs.
(cfr. \cite[Definition 2.3]{BF14})

\begin{defn}\label{definition_G-boundary}
Let $(\calG,\nu)$ be a measured groupoid with unit space $(X,\mu)$. A pair of Lebesgue 
$\calG$-spaces $((\calB_-,\theta_-),(\calB_+,\theta_+))$ such that $(t_{\mathcal{B}_\pm})_*\theta_\pm=\mu$ is a \emph{boundary pair} for $\mathcal{G}$ if
\begin{itemize}
\item[(i)] the semidirect groupoids $\mathcal{B}_\pm \rtimes \calG$ are amenable;
\item[(ii)] the projections $p_\pm:\mathcal{B}_-*\mathcal{B}_+\rightarrow \mathcal{B}_\pm$ 
on the two components are relatively isometrically ergodic.
\end{itemize}
The fiber product $\mathcal{B}_- \ast \mathcal{B}_+$ is done along the target maps $t_{\mathcal{B}_\pm}:\mathcal{B}_\pm \rightarrow X$. 

A Lebesgue $\calG$-space $(\calB,\theta)$ such that $(t_{\mathcal{B}})_*\theta=\mu$ is a \emph{$\mathcal{G}$-boundary} if $((\calB,\theta),(\calB,\theta))$ is a boundary pair for $\mathcal{G}$.

\end{defn}

\begin{oss}
The fact that $(t_{\mathcal{B}_\pm})_*\theta_\pm=\mu$ ensures that $\theta_\pm$ admits a disintegration with respect to $t_{\mathcal{B}_\pm}$ (see \cite[Theorem 2.1]{Hahn}). This assumption will reveal fundamental in the proof of Theorem \ref{theorem_furstenberg_map}.
\end{oss}

\begin{es}
When $\mathcal{G}$ is a locally compact second countable group $\Gamma$ endowed with the Haar measure class,  the notion of boundary pair boils down to the one already given by Bader and Furman \cite[Definition 2.3]{BF14}. For instance, the pair of Poisson boundaries associated to the Markov processes generated by a spread-out probability measure on $\Gamma$ is a boundary pair in the sense of Definition \ref{definition_G-boundary} by \cite[Theorem 2.7]{BF14}. 
\end{es}

The first non-trivial example of
Definition \ref{definition_G-boundary} is given for a semidirect groupoid associated to a probability measure-preserving action.
In such context a boundary pair boils down the product between the boundary pair of the group and the unit space. 

\begin{rec_thm}[\ref{theorem_semidirect}]
   Let $\Gamma$ be a locally compact second countable group and consider
    a probability measure preserving action $\Gamma\curvearrowright X$ on a
    standard Borel probability space $(X,\mu)$. Let $\mathcal{G}=\Gamma \ltimes X$ be the semidirect groupoid determined by the $\Gamma$-action on $X$. 
    If $((B_-,\beta_-),(B_+,\beta_+))$ is a boundary pair for $\Gamma$,
    then $(B_-\times X,B_+\times X)$ endowed with the product measures
    $(\theta_-,\theta_+)\coloneqq (\beta_-\otimes \mu,\beta_+\otimes \mu)$ is a boundary pair for $\mathcal{G}$. 

\noindent In particular, if $(B,\beta)$ is a $\Gamma$-boundary, then $(B \times X, \beta \otimes \mu)$ is a $\mathcal{G}$-boundary. 
\end{rec_thm}
\begin{proof}
The fact that the direct images of $(\theta_-,\theta_+)$ through the target maps coincide with $\mu$ is trivial because they are defined as products.

For the remaining part of the proof we will follow the line adopted in \cite[Theorem 1]{sarti:savini:3}.
By assumption, the actions $\Gamma \curvearrowright B_\pm$ are amenable and the 
projections $B_-\times B_+\rightarrow B_\pm$ are relatively isometrically ergodic.

First of all, the amenability of $\mathcal{B}_\pm \rtimes (\Gamma\ltimes X)$ follows immediately
by the amenability of $\Gamma \curvearrowright B_\pm$ \cite[Proposition 4.3.4]{zimmer:libro}, thanks to the groupoid isomorphism
\begin{gather*}
  ((B_\pm\times X) \rtimes (\Gamma\ltimes X),( (\beta_\pm\otimes \mu) \circ (m_{\Gamma}\otimes \mu))\rightarrow ((\Gamma\ltimes(B_\pm\times X),(\beta_\pm\otimes \mu)\circ m_{\Gamma}),\\
  ((b_\pm,x),(\gamma,x))\mapsto (\gamma,(b_\pm,x)).
\end{gather*}
where $m_\Gamma$ is the Haar measure on $\Gamma$ and $\circ $ defines the usual integration of a Borel system with respect to the measure 
on the unit space.

It remains to show relative isometric ergodicity of the projections
$$p_\pm:(B_-\times X)*(B_+\times X)\rightarrow B_\pm\times X.$$
Since the proofs of the two cases are analogous, we focus on the first term projection 
$p_-:(B_-\times X)*(B_+\times X)\rightarrow B_-\times X$.
To this end,
we consider a fiberwise isometric $\mathcal{G}$-action $q:Y\rightarrow Z$ between Borel
$\mathcal{G}$-spaces $Y$ and $Z$ and the commutative diagram
\begin{center}
  \begin{tikzcd}
    (B_-\times X)*(B_+\times X)\arrow{r}{f}\arrow{d}[swap]{p_-} & Y\arrow{d}{q} \\
    B_-\times X\arrow{r}{f_0} & Z,
  \end{tikzcd}
  \end{center}
  where both $f:(B_-\times X)*(B_+\times X)\rightarrow Y$ and $f_0:B_-\times X\rightarrow Z$ are 
  $\mathcal{G}$-maps. 

Thanks to the $\mathcal{G}$-isomorphism of measure spaces
\begin{equation}\label{equation_isomorphism1}
  (B_-\times X)*(B_+\times X) \rightarrow (B_-\times B_+)\times X,\;\;\;
((b_-,x),(b_+,x))\mapsto ((b_-,b_+),x) \,
\end{equation}
the above diagram boils down to 
\begin{equation}\label{equation_diagram1}
  \begin{tikzcd}
    (B_-\times B_+)\times X\arrow{r}{f}\arrow{d}[swap]{p_-} & Y\arrow{d}{q} \\
    B_-\times X\arrow{r}{f_0} & Z,
  \end{tikzcd}
\end{equation}
where we abusively denote again by $p_-$ and $f$ 
the composition of $p_-$ and $f$ with the isomorphism of Equation
 \eqref{equation_isomorphism1}, respectively.

We denote by $\mathrm{L}^0(X,Y)$ the space of all measurable functions between $X$ and $Y$ identified if they coincide $\mu$-almost everywhere. We adopt a similar definition for $\mathrm{L}^0(X,Z)$. With a slight abuse of notation, we will always pick a representative to refer to an equivalence class of sections lying in $\mathrm{L}^0(X, \ \cdot \ )$. 

The space $\mathrm{L}^0(X,Y)$ is a Polish space with the structure coming from the topology of convergence in measure \cite[Section 4.4]{Wheeden},\cite[Notation 2.4]{fisher:morris:whyte}. A generating family of Borel sets \cite[Remark 2.5]{fisher:morris:whyte} for the associated $\sigma$-algebra is given by 
\begin{equation}\label{eq generating Borel structure}
\Delta_{X_0,Y_0,\varepsilon}:=\{ f \ | \ \mu(X_0 \cap f^{-1}(Y_0) ) < \varepsilon \},
\end{equation}
where $X_0 \subset X, Y_0 \subset Y$ are Borel subsets and $\varepsilon >0$.

We set $\widetilde{q}:\mathrm{L}^0(X,Y)\rightarrow \mathrm{L}^0(X,Z)$ as $\widetilde{q}(\varphi)\coloneqq q\circ \varphi$ 
and define the distance $\widetilde{d}_{\psi}$ on the fiber $\widetilde{q}^{-1}(\psi)$ as 
  \begin{equation}\label{equation_metric}
    \widetilde{d}_{\psi}(\varphi_1,\varphi_2)\coloneqq\int\limits_X \frac{d_{\psi(x)}(\varphi_1(x),\varphi_2(x))}{1+d_{\psi(x)}(\varphi_1(x),\varphi_2(x))}d\mu(x),
  \end{equation}
where $d_{\psi(x)}$ is the distance given along $q:Y \rightarrow Z$ (and the latter makes sense because $q(\varphi_1(x))=q(\varphi_2(x))=\psi(x)$). Notice that the measurability of $q$, applied to the Borel sets of Equation \eqref{eq generating Borel structure}, implies that $\widetilde{q}$ is measurable.

The target map $t_Y$ induces a Borel map $\widetilde{t}_Y:\mathrm{L}^0(X,Y) \rightarrow \mathrm{L}^0(X,X)$ (and the same does $t_Z$). We define the preimage of the identity $\mathrm{id}_X$ as
$$
\mathcal{S}(X,Y):=\widetilde{t}_Y^{-1}(\mathrm{id}_X),
$$
which is the subspace of (classes of) measurable sections to $t_Y$. The latter is standard Borel, being a Borel subset of a Polish space. The same holds for $\mathcal{S}(X,Z)$. Moreover we can restrict $\widetilde{q}:\mathcal{S}(X,Y) \rightarrow \mathcal{S}(X,Z)$ getting a well-defined Borel map.

If we denote by $\ \cdot_Y \ $ the $(\Gamma \ltimes X)$-action on $Y$ and by $\ \cdot_Z \ $ the one on $Z$, the map
$$\Gamma \times \mathcal{S}(X,Y)\rightarrow \mathcal{S}(X,Y)\,,\;\;\; 
\gamma\cdot \varphi(x)\coloneqq (\gamma,\gamma^{-1}x)\cdot_Y \varphi(\gamma^{-1} x)$$
is a fiberwise isometric $\Gamma$-action on $\widetilde{q}$.
In fact, we have that 
\begin{align*}
  &\widetilde{d}_{\gamma\cdot \psi}(\gamma\cdot \varphi_1,\gamma\cdot \varphi_2)=\\
 & = \int\limits_X \frac{ d_{(\gamma,\gamma^{-1}x)\cdot_Z \psi(\gamma^{-1} x)}((\gamma,\gamma^{-1}x) \cdot_Y\varphi_1(\gamma^{-1}x),(\gamma,\gamma^{-1}x) \cdot_Y\varphi_2(\gamma^{-1}x))}{1+d_{(\gamma,\gamma^{-1}x)\cdot_Z \psi(\gamma^{-1} x)}((\gamma,\gamma^{-1}x) \cdot_Y\varphi_1(\gamma^{-1}x),(\gamma,\gamma^{-1}x) \cdot_Y\varphi_2(\gamma^{-1}x))}d\mu(x)\\
 & = \int\limits_X \frac{ d_{ \psi(\gamma^{-1} x)}(\varphi_1(\gamma^{-1}x),\varphi_2(\gamma^{-1}x))}{1+d_{ \psi(\gamma^{-1} x)}(\varphi_1(\gamma^{-1}x),\varphi_2(\gamma^{-1}x))}d\mu(x)\\
 & = \int\limits_X \frac{d_{\psi(y)}(\varphi_1(y),\varphi_2(y))}{1+d_{\psi(y)}(\varphi_1(y),\varphi_2(y))}d\mu(y)= \widetilde{d}_{\psi}(\varphi_1,\varphi_2)
\end{align*}
where we moved from the second line to the third one exploiting the fact that $q$ is a fiberwise isometric $\mathcal{G}$-action
and from the third line to the last one thanks to the $\Gamma$-invariance of $\mu$.
Moreover defining 
 $\widetilde{f}:B_-\times B_+\rightarrow \mathcal{S}(X,Y)$ as $$\widetilde{f}(b_-,b_+)(x)\coloneqq f((b_-,b_+),x)$$
 and similarly $\widetilde{f_0}:B_-\rightarrow \mathcal{S}(X,Z)$ as 
 $$\widetilde{f_0}(b_-)(x)\coloneqq f_0(b_-,x),$$
the diagram of Equation \eqref{equation_diagram1} becomes
\begin{equation}\label{equation_diagram2}
  \begin{tikzcd}
    B_-\times B_+\arrow{r}{\widetilde{f}}\arrow{d}[swap]{p_1} & \mathcal{S}(X,Y)\arrow{d}{\widetilde{q}} \\
    B_-\arrow{r}{\widetilde{f_0}} & \mathcal{S}(X,Z).
  \end{tikzcd}
\end{equation}

Since the first factor projection $p_1:B_-\times B_+\rightarrow B_-$ is relatively isometrically
ergodic, there exists a lift $\widetilde{F}:B_-\rightarrow \mathcal{S}(X,Y)$ of $\widetilde{f_0}$ 
such that the following diagram commutes
\begin{equation}\label{equation_diagram3}
  \begin{tikzcd}
    B_-\times B_+\arrow{r}{\widetilde{f}}\arrow{d}[swap]{p_1} & \mathcal{S}(X,Y)\arrow{d}{\widetilde{q}} \\
    B_-\arrow{r}{\widetilde{f_0}}\arrow[dotted]{ru}{\widetilde{F}} & \mathcal{S}(X,Z).
  \end{tikzcd}
\end{equation}
Then, setting 
$$F:B_-\times X\rightarrow Y\,,\;\;\; F(b_-,x)\coloneqq \widetilde{F}(b_-)(x)$$
we obtain a lift of $f_0$ fitting in the following commutative diagram
\begin{equation}\label{equation_diagram4}
    \begin{tikzcd}
      (B_-\times B_+)\times X\arrow{r}{f}\arrow{d}[swap]{p_-} & Y\arrow{d}{q} \\
      B_-\times X\arrow{r}{f_0}\arrow[dotted]{ru}{F} & Z.
    \end{tikzcd}   
  \end{equation}
 This shows the relative isometric ergodicity of $p_-$ and concludes the proof.
\end{proof}

\begin{oss}
The previous theorem is relevant also from a cohomological point of view. Let $\calG=\Gamma \ltimes X$ be the semidirect groupoid associated to a measure preserving action of a locally compact group. Given a coefficient $\calG$-module (see \cite[Definition 1.2.1]{monod:libro}), in virtue of the \emph{exponential law} \cite[Theorem 1]{sarti:savini:23} we know that the measurable bounded cohomology of $\calG$ with $E$-coefficients is isomorphic to the continuous bounded cohomology of $\Gamma$ with twisted coefficients $\mathrm{L}^\infty_{\mathrm{w}^\ast}(X,E)$, namely
$$
\mathrm{H}^k_{mb}(\calG,E) \cong \mathrm{H}^k_{cb}(\Gamma,\mathrm{L}^\infty_{\mathrm{w}^\ast}(X,E)),
$$
for every $k \geq 0$. If $(B_-,B_+)$ is a boundary pair for $\Gamma$ in the sense of Bader-Furman, the amenability of $B_\pm$ guarantees that the complex of essentially bounded functions on $B_\pm$ computes the bounded cohomology of $\Gamma$ with the same coefficients \cite[Theorem 1]{burger2:articolo}. As a consequence, we have that 
\begin{equation}\label{eq:isomorphism:cohomology}
\mathrm{H}^k_{cb}(\Gamma,\mathrm{L}^\infty_{\mathrm{w}^\ast}(X,E)) \cong \mathrm{H}^k(\mathrm{L}^\infty_{\mathrm{w}^\ast}(B_\pm^{\bullet+1},\mathrm{L}^\infty_{\mathrm{w}^\ast}(X,E))^\Gamma),
\end{equation}
where the $\Gamma$-action on $\mathrm{L}^\infty_{\mathrm{w}^\ast}(X,E)$ is induced by the one of $\calG$ on $E$. 

By Theorem \ref{theorem_semidirect} we know that $\mathcal{B}_\pm=B_\pm \times X$ is a boundary pair of the groupoid $\calG$. The usual exponential law for essentially bounded functions \cite[Corollary 2.3.3]{monod:libro} can be exploited to show that the complex on the right-hand side of Equation \eqref{eq:isomorphism:cohomology} is actually isomorphic to
$$
\mathrm{L}^\infty_{\mathrm{w}^\ast}(\mathcal{B}_\pm^{(\bullet+1)},E)^\calG.
$$
Here $\mathcal{B}_\pm^{(k)}$ is the $k$-fiber product of $\mathcal{B}_\pm$ with respect to the target map and the apex $\calG$ refers to the $\calG$-invariants functions, defined as in \cite[Section 4.2]{sarti:savini:23}. To sum up the above chain of isomorphisms, we obtain that 
$$
\mathrm{H}^k_{mb}(\calG,E) \cong \mathrm{H}^k(\mathrm{L}^\infty_{\mathrm{w}^\ast}(\mathcal{B}_\pm^{(\bullet+1)},E)^\calG).
$$
Thus, in the case of an action groupoid $\calG$, the boundary pair can be used to compute the measurable bounded cohomology of $\calG$. In the future, we hope to prove that this is actually true for any measured groupoid, generalizing in this way the result by Burger and Monod. 
\end{oss}

%
%

\section{The Poisson boundary} \label{sec:poisson}

In this section we are going to focus our attention on the notion of Poisson boundary for an invariant Markov operator on a measured groupoid. After a brief introduction about the general setting, we describe the particular case of a Markov operator generated by a probability measure $\pi$ lying in the same class of the integrated Haar system. We are going to show that the Poisson boundaries associated to the measure $\pi$ and its reflected variant $\check{\pi}$, respectively, determine a boundary pair in the sense of Definition \ref{definition_G-boundary}. For the construction of the Poisson boundary we mainly refer to Kaimanovich \cite{Kai05} and the references therein. 

\subsection{The Poisson boundary of an invariant Markov process on a measured groupoid}\label{subsec:markov}

Let $\mathcal{G}$ be a Borel groupoid with units $X=\mathcal{G}^{(0)}$. We consider a Haar system $\lambda=\{ \lambda^x \}_{x \in X}$ with respect to the target map and we denote by $\mu$ a quasi-invariant probability measure on $X$ with respect to $\lambda$. A \emph{Markov chain} defined on $\mathcal{G}$ is the assignment of a Borel probability measure $\pi^g$ on $\mathcal{G}$ to any element $g \in \mathcal{G}$ so that the function
$$
\pi(f): \mathcal{G} \rightarrow \mathbb{R}, \ \ \pi(f)(g):=\int_{\mathcal{G}} f(h)d\pi^g(h)
$$
is Borel for every non-negative Borel function $f$ on $\mathcal{G}$. We say that the Markov chain is $\mathcal{G}$-\emph{invariant} if it holds
\begin{equation}\label{eq:ginvariance}
\pi^{gh}=g_\ast \pi^h , 
\end{equation}
for every pair of composable elements $(g,h) \in \mathcal{G}^{[2]}$. As noticed by Kaimanovich \cite[Section 3B]{Kai05}, Equation \eqref{eq:ginvariance} implies that $\pi^g(\mathcal{G}^{t(g)})=1$
for any $g\in \mathcal{G}$. By definition, every $\mathcal{G}$-invariant Markov chain $\{ \pi^g \}_{g \in \mathcal{G}}$ is uniquely determined by the transition probabilities defined on the units, namely $\{ \pi^x \}_{x \in X}$. Conversely, any Borel system of probability measures supported on the fibers $\{ \mathcal{G}^x \}_{x \in X}$ of the target map can be uniquely extended to a $\mathcal{G}$-invariant Markov chain \cite[Proposition 3.4]{Kai05} by setting
$$
\pi^g:=g_\ast \pi^{s(g)},
$$ 
for every $g \in \mathcal{G}$. 

Given a Markov chain on $\mathcal{G}$, we can naturally define the associated \emph{Markov operator} on the space of bounded Borel functions on $\mathcal{G}$ by setting
$$
(P\varphi)(g):=\int_{\mathcal{G}} \varphi(h)d\pi^g(h) ,
$$
where $\varphi$ is a bounded Borel function. We say that $\varphi$ is $P$-\emph{harmonic} if it holds $\varphi=P\varphi$.

 When the chain is $\mathcal{G}$-invariant, the Markov operator $P$ boils down to a $\mathcal{G}$-invariant collection of Markov operators $\{ P^x \}_{x \in X}$ on the fibers $\{\mathcal{G}^x \}_{x \in X}$ of the target map \cite[Section 3.B]{Kai05}. More precisely, the collection of measures defining $P^x$ is given by $\{ \pi^g \}_{t(g)=x}$. 

By duality, the \emph{dual Markov operator} acts on the space of non-negative Borel measures on $\mathcal{G}$ as follows
$$
m P:=\int_{\mathcal{G}} \pi^g dm(g) , 
$$
where $m$ is a non-negative measure. We say that $m$ is $P$-\emph{invariant}, $P$-\emph{quasi-invariant} or $P$-\emph{adapted} if $m P=m$, $m P$ is equivalent to $m$ or $m P$ is absolutely continuous with respect to $m$, respectively. For our purpose, we are going  to assume that 
$$
\mu \circ \lambda=\int_X \lambda^x d\mu(x) 
$$
is $P$-adapted. This condition is equivalent to assume that the measure $\lambda^x$ is adapted with respect to the fiberwise Markov operator $P^x$, for almost every $x \in X$. The measure $\mu \circ \lambda$ is $P$-adapted when we consider \emph{absolutely continuous} transition probabilities $\{ \pi^g \}_{g \in \mathcal{G}}$, namely when it holds
\begin{equation}\label{eq:abscont}
\pi^g  \prec \lambda^{t(g)}
\end{equation}
for every $g \in \mathcal{G}$. Again by $\mathcal{G}$-invariance, it is sufficient to verify Equation \eqref{eq:abscont} only on the space of units.

Let now consider the infinite fiber product
$$
\Omega_{\mathcal{G}}:=\{ \omega=(g_0,g_1,g_2,\ldots) \in \mathcal{G}^{\mathbb{N}} \ | \ t(g_i)=t(g_j) \ \ \forall i,j \in \mathbb{N} \},
$$
where we call an element $\omega \in \Omega_{\mathcal{G}}$ a \emph{forward trajectory}. Since the target is constant along the components of a forward trajectory, we have a natural target map 
$$
t_\Omega:\Omega_{\mathcal{G}} \rightarrow X, \ \ t_\Omega(\omega)=t(g_0).
$$
Given $g \in \mathcal{G}$, we can define a Borel probability measure $\mathbb{P}_g$ such that, for any bounded Borel functions $\varphi_0,\ldots,\varphi_n$ on $\mathcal{G}$, it holds that
\begin{equation}\label{eq:implicit:def}
\int_{\Omega_{\mathcal{G}}} \varphi_0(g_0) \ldots \varphi_n(g_n) d\mathbb{P}_g(\omega)=(\varphi_0 \cdot P( \ldots (\varphi_{n-1} \cdot (P\varphi_n)) \ldots ))(g),
\end{equation}
which can be alternatively rewritten as
\begin{align}\label{eq:explicit:def}
&\int_{\Omega_{\mathcal{G}}} \varphi_0(g_0) \ldots \varphi_n(g_n) d\mathbb{P}_g(\omega)\\
=&\int_{\mathcal{G}}\ldots\int_{\mathcal{G}}\varphi_0(g_0) \ldots \varphi_n(g_n) d\pi^{g_{n-1}}(g_n)\ldots d\pi^{g_0}(g_1)d\delta_g(g_0). \nonumber
\end{align}
Notice that $\mathbb{P}_g$ is well-defined on $\Omega_{\calG}$ since the right-hand side of Equation \eqref{eq:explicit:def} forces all the $g_i$'s to have the same target. Following Benoist and Quint \cite[Chapter 2.1.1]{BQ16} we say that $\mathbb{P}_g$ is the $g$-\emph{Markov measure} on the space of forward trajectories associated to the Markov operator $P$. More generally, given a probability measure $m$ on $\mathcal{G}$ as initial distribution, we can consider the \emph{Markov measure associated to} $m$ by applying the usual integration method, namely
\begin{equation}\label{equation_measure_forward_traj}
  \mathbb{P}_m:=\int_{\mathcal{G}} \mathbb{P}_g dm(g).  
\end{equation}
In this way we immediately see that $\mathbb{P}_g$ is the Markov measure with initial distribution $\delta_g$, the Dirac mass at $g$. Notice that the $n$-th one-dimensional distribution associated to the Markov measure $\mathbb{P}_m$ is precisely $m P^n$, namely the Markov operator applied $n$ times to $m$. 

We define the \emph{time shift} by
\begin{align*}
T:\Omega_{\mathcal{G}} &\longrightarrow \Omega_{\mathcal{G}}\\
\omega=(g_0,g_1,g_2,\ldots) &\mapsto T\omega:=(g_1,g_2,g_3,\ldots). 
\end{align*}
It is worth noticing that, given an initial distribution $m$ on $\mathcal{G}$, the time shift acts naturally on the Markov measure $\mathbb{P}_m$ as follows
\begin{equation}
T_\ast \mathbb{P}_m=\mathbb{P}_{mP}.
\end{equation}
Given two forward trajectories $\omega,\omega' \in \Omega_{\mathcal{G}}$, we say that they are \emph{stably} $T$-\emph{equivalent}, if there exist $k,\ell \in \mathbb{N}$ such that $T^k \omega=T^\ell \omega'$. By choosing as $m$ the integrated Haar system $\mu \circ \lambda$, there must exist a unique (up to isomorphism) measurable space $\mathcal{B}$ and a Borel map 
$$
\mathbf{bnd}:\Omega_{\mathcal{G}} \rightarrow \mathcal{B},
$$
such that the $\sigma$-algebra generated by $\mathbf{bnd}$ coincides with the one generated by stable $T$-equivalence \cite[Section 5.A]{Kai05}. We say that $\mathcal{B}$ is the space of ergodic components with respect to the time shift and we endow it with the measure class $[\theta_{\mu \circ \lambda}]:=\mathbf{bnd}_\ast[\mathbb{P}_{\mu \circ \lambda}]$.

\begin{defn}\label{def:poisson:boundary}
Let $(\mathcal{G},\mu \circ \lambda)$ be a measured groupoid with Borel Haar system $\lambda$ and measure $\mu$ on the unit space. Consider a $\mathcal{G}$-invariant Markov chain $\{ \pi^g \}_{g \in \mathcal{G}}$ with associated Markov operator $P$. Suppose that $\mu \circ \lambda$ is $P$-adapted. The \emph{Poisson boundary} of the Markov operator $P$ is the space $\mathcal{B}$ of ergodic components of $(\Omega_{\mathcal{G}},\mathbb{P}_{\mu \circ \lambda})$ with respect to $T$ endowed with the measure class $[\theta_{\mu \circ \lambda}]$.  
\end{defn}

We first observe that the target map $t_\Omega$ on $\Omega_{\mathcal{G}}$ naturally descends to a target map on $\mathcal{B}$, namely
$$
t_{\mathcal{B}}:\mathcal{B} \rightarrow X, \ \ t_{\mathcal{B}}([\omega])=t_\Omega(\omega).  
$$
As noticed by Kaimanovich, the generic fiber $t_{\mathcal{B}}^{-1}(x)=\mathcal{B}^x$ of the above map is given by the Poisson boundary of the fiberwise Markov operator $P^x$ defined on $\mathcal{G}^x$ \cite[Section 5.B]{Kai05}. 

There exists a natural left action of $\mathcal{G}$ on $\Omega$ given by 
$$
\mathcal{G} \ast \Omega_\mathcal{G} \rightarrow \Omega_\mathcal{G}, \ \ (g,\omega=(g_0,g_1,\ldots)) \mapsto (gg_0,gg_1,\ldots),
$$
where $s(g)=t(\omega)$. Since the $\mathcal{G}$-action commutes with the time shift $T$, it naturally descends to an action on the boundary $\mathcal{B}$, that is 
$$
\mathcal{G} \ast \mathcal{B} \rightarrow \mathcal{B}, \ \ (g, b=[\omega]) \mapsto gb:=[g\omega]. 
$$
Additionally, since $\mu \circ \lambda$ is quasi-symmetric, the Markov measure $\mathbb{P}_{\mu \circ \lambda}$ is quasi-symmetric for the groupoid $\Omega_{\calG} \rtimes \mathcal{G}$, and hence the measure class $[\theta_{\mu \circ \lambda}]$ is  symmetric with respect to the induced $\mathcal{G}$-action on $\mathcal{B}$ \cite[Section 5.B]{Kai05}.  


\subsection{The boundary pair generated by a quasi-symmetric left quasi-invariant probability measure.}\label{subsec:gboundary}

In this section we show that the Poisson boundaries associated to the invariant Markov operators determined by a probability measure in the class of the integrated Haar system provide an example of boundary pairs in the sense of Definition \ref{definition_G-boundary}. 

We fix the following setting.
Let $(\mathcal{G},\mu \circ \lambda)$ be a measured groupoid, where $\lambda=\{\lambda^x\}_{x \in X}$ is a Borel Haar system and $\mu$ is a quasi-invariant probability measure on $X$ with respect to $\lambda$.
Let $\pi$ be a probability measure equivalent to $\mu \circ \lambda$ such that $t_\ast \pi=s_\ast \pi=\mu$, where $t$ and $s$ are the target and the source on $\calG$, respectively.
Denote by $\{\pi^x\}_{x\in X}$ the $t$-disintegration of $\pi$ and define the associated $\calG$-invariant Markov chain $\{\pi^g\}_{g\in \mathcal{G}}$, obtained through the left $\mathcal{G}$-action (see Section \ref{subsec:markov}). By construction, the left quasi-invariance of $\pi$ implies that $$\pi^g \sim \lambda^{t(g)},$$ thus we have absolutely continuous transition probabilities. 

We consider the \emph{reflected measure} $\check{\pi}$, namely the direct image on $\calG$ of the measure $\pi$ via the inverse map. Following what we did for $\pi$, we can consider the $t$-disintegration $\{\check{\pi}^x\}_{x\in X}$ and then extend it to a $\calG$-invariant Markov chain $\{\check{\pi}^g\}_{g\in \calG}$.
We denote by $\mathbb{P}_\mu$ the Markov measure on $\Omega_{\calG}$ defined by the invariant Markov chain determined by $\pi$ with initial distribution $\mu$. We define $\check{\mathbb{P}}_\mu$ in a similar way by looking at the Markov chain generated by $\check{\pi}$. The Poisson boundaries associated to the above Markov measures will be denoted by $(\mathcal{B},\theta_{\mu})$ and $(\check{\mathcal{B}},\check{\theta}_{\mu})$. 

The main goal of this section is to prove the following
\begin{rec_thm}[\ref{theorem_poisson_boundary}]
Let $(\calG,\nu)$ be a measured groupoid with unit space $(X,\mu)$ and let $\pi$ be a probability measure equivalent to $\nu$ such that $t_*\pi=s_*\pi=\mu$, where $t$ and $s$ are the target and the source on $\mathcal{G}$, respectively. Denote by $\check{\pi}$ the direct image of $\pi$ under the inverse map. The pair $((\mathcal{B},\theta_{\mu}),(\check{\mathcal{B}},\check{\theta}_{\mu}))$ of Poisson boundaries associated to the Markov operators generated by $\pi$ and $\check{\pi}$, respectively, with initial distribution $\mu$ is a boundary pair for $\mathcal{G}$.

\noindent In particular, if $\pi$ is symmetric, namely $\pi=\check{\pi}$, then $(\mathcal{B},\theta_{\mu})$ is a $\mathcal{G}$-boundary.
\end{rec_thm}

This extends the analogous result given by Bader and Furman \cite[Theorem 2.7]{BF14} for groups. Although the proof is a suitable adaptation to the groupoids framework of the one by Bader and Furman, some new technicalities arise.

The first necessary step is to show an invariance property of the measure $\mathbb{P}_\mu$ that we will explicitly use later on.

\begin{lemma}\label{lemma_preserve}
  Retain the notation given at the beginning of the section. Then the map 
  \begin{equation}\label{equation_map_trajectories}
    S:\Omega_{\mathcal{G}} \longrightarrow \Omega_{\mathcal{G}}, \ \ S(g_0,g_1,g_2,\ldots)\coloneqq (g_1^{-1}g_1,g_1^{-1}g_2,\ldots)
  \end{equation}
  preserves the measure $\mathbb{P}_{\mu}$, namely
  \begin{equation*}
  S_\ast(\mathbb{P}_\mu)=\mathbb{P}_\mu.
  \end{equation*} 
\end{lemma}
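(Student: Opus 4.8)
The plan is to verify the identity $S_\ast(\mathbb{P}_\mu)=\mathbb{P}_\mu$ by testing both measures against cylinder functions of the form $\varphi_0(g_0)\varphi_1(g_1)\cdots\varphi_n(g_n)$, which generate the Borel $\sigma$-algebra on $\Omega_{\mathcal{G}}$. Writing $S(g_0,g_1,g_2,\ldots)=(g_1^{-1}g_1,g_1^{-1}g_2,\ldots)=(s(g_1),g_1^{-1}g_2,g_1^{-1}g_3,\ldots)$, the pushforward is computed by
\begin{equation*}
\int_{\Omega_{\mathcal{G}}}\varphi_0(g_0)\cdots\varphi_n(g_n)\diff(S_\ast\mathbb{P}_\mu)
=\int_{\Omega_{\mathcal{G}}}\varphi_0(s(g_1))\varphi_1(g_1^{-1}g_2)\cdots\varphi_n(g_1^{-1}g_{n+1})\diff\mathbb{P}_\mu.
\end{equation*}
First I would expand the right-hand side using the explicit formula \eqref{eq:explicit:def} for $\mathbb{P}_\mu$ with initial distribution $\mu$, i.e.\ $\diff\mathbb{P}_\mu=\int_{\mathcal{G}}\diff\mathbb{P}_g\diff\mu(\ldots)$ realized as an iterated integral $\int\cdots\int \diff\pi^{g_n}(g_{n+1})\cdots\diff\pi^{g_1}(g_2)\diff\pi^{x}(g_1)\diff\mu(x)$. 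The key computational input is the $\mathcal{G}$-invariance of the Markov chain, $\pi^{gh}=g_\ast\pi^h$, equivalently $\int f(h)\diff\pi^{g_1}(h)=\int f(g_1 h)\diff\pi^{s(g_1)}(h)$, which lets me substitute $g_{i}\mapsto g_1^{-1}g_i$ fiberwise in each integral from $i=2$ onward and reduce the chain rooted at $g_1$ to a chain rooted at $s(g_1)$.

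The mechanism is essentially the standard Markov-chain identity that the shift sends $\mathbb{P}_m$ to $\mathbb{P}_{mP}$, but twisted by a fiberwise translation so that it becomes measure-\emph{preserving} rather than just $\mathbb{P}_\mu\mapsto\mathbb{P}_{\mu P}$. Concretely, after performing the substitution the inner integrals collapse to $\int\varphi_1(g_1)\cdots\varphi_n(g_n)\diff\mathbb{P}_{s(g_1)}$ style expressions, and I am left integrating $\varphi_0(s(g_1))$ times that against $\diff\pi^x(g_1)\diff\mu(x)$. Here I would use $t_\ast\pi=s_\ast\pi=\mu$: integrating out $g_1$ against $\pi^x$ and then $x$ against $\mu$ amounts to integrating a function of $s(g_1)$ against $s_\ast\pi=\mu$; meanwhile the condition $t_\ast\pi=\mu$ ensures the resulting chain rooted at $s(g_1)$ is distributed according to $\mathbb{P}_\mu$ (since the new root $y=s(g_1)$ ranges over $X$ with law $\mu$, and $\mu$ is precisely the initial distribution of $\mathbb{P}_\mu$ after the change of root). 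Matching the two sides then gives exactly $\int\varphi_0\cdots\varphi_n\diff\mathbb{P}_\mu$, as desired.

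The main obstacle I anticipate is bookkeeping the fiberwise substitution rigorously: the map $(g_1,g_2,\ldots)\mapsto(g_1^{-1}g_2,g_1^{-1}g_3,\ldots)$ only makes sense because every $g_i$ shares the target $t(g_1)$, so one must keep track that after translation all the new entries share target $s(g_1)$, and that the disintegrations $\{\pi^g\}$ transform correctly under left translation — this is where $\mathcal{G}$-invariance is used repeatedly and where an inattentive computation could drop a Radon–Nikodym factor. A secondary subtlety is that the statement is genuinely an equality of measures, not merely of measure classes, so the use of the hypotheses $t_\ast\pi=s_\ast\pi=\mu$ (rather than mere equivalence) must be honest: it is exactly these mass-normalization conditions that upgrade the quasi-invariance into honest invariance. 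Once the cylinder identity is established for all $n$ and all bounded Borel $\varphi_0,\ldots,\varphi_n$, a routine monotone-class argument extends it to all of $\mathrm{L}^1(\Omega_{\mathcal{G}},\mathbb{P}_\mu)$ and concludes the proof.
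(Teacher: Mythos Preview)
Your approach is correct and uses the same essential ingredients as the paper---the $\mathcal{G}$-invariance $\pi^{gh}=g_\ast\pi^h$ and the hypothesis $s_\ast\pi=\mu$---but the paper organizes the computation more cleanly by factoring $S$ rather than unwinding cylinder integrals directly. Specifically, the paper observes that $S\omega=g_1^{-1}T\omega$, so $S$ is the composition of the time shift $T$ (which sends $\mathbb{P}_\mu$ to $\mathbb{P}_{\mu P}=\mathbb{P}_\pi$) with the diagonal translation $\sigma(g,\omega)=g^{-1}\omega$ on $\mathcal{G}\ast\Omega_{\mathcal{G}}$; it then checks $\sigma_\ast(\pi\,\widehat{\otimes}\,\mathbb{P}_\pi)=\mathbb{P}_\mu$ in one line using $g^{-1}_\ast\mathbb{P}_g=\mathbb{P}_{s(g)}$ and $s_\ast\pi=\mu$. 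This packages your fiberwise substitution $g_i\mapsto g_1^{-1}g_i$ into the single identity $g^{-1}_\ast\mathbb{P}_g=\mathbb{P}_{s(g)}$, avoiding the index bookkeeping and the monotone-class step entirely.

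One small correction: in your last paragraph you attribute the final identification to $t_\ast\pi=\mu$, but it is really $s_\ast\pi=\mu$ that does the work (the law of $s(g_1)$ under $\pi$ is $\mu$, so the new root $y=s(g_1)$ has law $\mu$). The condition $t_\ast\pi=\mu$ enters only implicitly, guaranteeing that the $t$-disintegration $\{\pi^x\}$ integrates back to $\pi$ against $\mu$; the paper invokes only $s_\ast\pi=\mu$ explicitly.
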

\begin{proof}
We start considering the fiber product 
$$
\calG \ast \Omega_{\calG}:=\{ (g, \omega=(g_0,g_1,g_2,\ldots)) \in \calG \times \Omega_{\calG} \ | \ g=g_0 \},
$$
endowed with the fiber product measure
$$
\pi \widehat{\otimes} \mathbb{P}_\pi:=\int_{\calG} (\delta_g \otimes \mathbb{P}_g) d\pi(g). 
$$
There is a natural map 
$$
\sigma:\calG \ast \Omega_{\calG} \rightarrow \Omega_{\calG}, \ \ \ \sigma(g,\omega)=g^{-1}\omega,
$$
where the multiplication by $g^{-1}$ is done diagonally. If $\omega=(g_0,g_1,g_2,\ldots)$ and $T$ is the time shift on $\Omega_{\calG}$, we can see that the map $S$ can be rewritten as
\begin{equation}\label{eq:sigma:dis}
S\omega=\sigma(g_1,T\omega)=g_1^{-1}T\omega. 
\end{equation}
Recalling that the time shift acts on $\mathbb{P}_\mu$ as follows
$$
T_\ast \mathbb{P}_\mu=\mathbb{P}_{\mu P}=\mathbb{P}_\pi, 
$$
Equation \eqref{eq:sigma:dis} implies that 
$$
S_\ast(\mathbb{P}_\mu)=\sigma_\ast(\pi \widehat{\otimes} \mathbb{P}_\pi). 
$$
As a consequence, it is sufficient to show $\sigma_\ast(\pi \widehat{\otimes} \mathbb{P}_\pi)=\mathbb{P}_\mu$. Let $\varphi$ be a bounded Borel function on $\Omega_{\calG}$. 
\begin{align*}
\langle \sigma_\ast(\pi \widehat{\otimes} \mathbb{P}_\pi), \varphi \rangle &=\langle\pi \widehat{\otimes} \mathbb{P}_\pi, \varphi \circ \sigma \rangle\\
&=\int_{\calG} \int_{\Omega_{\calG}} \int_{\mathcal{G}}\varphi(\sigma(h,\omega))d\delta_g(h)d\mathbb{P}_g(\omega)d\pi(g)\\
&=\int_{\calG} \int_{\Omega_{\calG}} \varphi(g^{-1}\omega)d\mathbb{P}_g(\omega)d\pi(g)\\
&=\int_{\calG} \int_{\Omega_{\calG}} \varphi(\omega)d\mathbb{P}_{s(g)}(\omega)d\pi(g)\\
&=\int_X \int_{\Omega_{\calG}} \varphi(\omega)d\mathbb{P}_x(\omega)d\mu(x)=\langle \mathbb{P}_\mu,\varphi \rangle. 
\end{align*}
In the above computation we passed from the second line to the third one by using the definition of $\sigma$, we moved from the third line to the fourth one thanks to the fact that $g^{-1}_\ast \mathbb{P}_g=\mathbb{P}_{s(g)}$ and we concluded exploiting the definition of $\mathbb{P}_\mu$ together with the hypothesis $s_\ast(\pi)=\mu$. This concludes the proof. 
\end{proof}

We move on in our investigation by showing that, under the assumptions settled at the beginning of this section, the Poisson boundaries we are dealing with are amenable $\mathcal{G}$-space. Here the crucial assumption is that both $\pi$ and $\check{\pi}$ are equivalent to $\mu\circ \lambda$.
\begin{lemma}\label{lem:amenability}
Let $(\mathcal{G},\mu \circ \lambda)$ be a measured groupoid and let $\pi$ a probability measure equivalent to $\mu \circ \lambda$. Consider the Poisson boundaries $(\mathcal{B},\theta_{\mu})$ and $(\check{\mathcal{B}},\check{\theta}_{\mu})$ associated to the Markov operators generated by $\pi$ and $\check{\pi}$, respectively, both with initial distribution $\mu$. Then, the semidirect groupoids $(\mathcal{B} \rtimes \mathcal{G}, \theta_\mu \circ \lambda)$ and $(\check{\mathcal{B}} \rtimes \calG,  \check{\theta}_\mu \circ \lambda)$ are amenable.  
\end{lemma}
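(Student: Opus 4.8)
The plan is to deduce the statement from Kaimanovich's theorem on the amenability of the Poisson bundle \cite{Kai05}, the only point requiring care being that the measure class on $\mathcal{B}$ in our setup comes from the Markov measure $\mathbb{P}_\mu$ with initial distribution the measure $\mu$ on the unit space, whereas Kaimanovich's construction (Definition \ref{def:poisson:boundary}) uses the integrated Haar system $\mu\circ\lambda$ as initial distribution. Once we know these two classes on $\mathcal{B}$ coincide, amenability of $\mathcal{B}\rtimes\mathcal{G}$ is \cite{Kai05} verbatim, and symmetrically for $\check{\mathcal{B}}\rtimes\mathcal{G}$.

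First I would verify that the invariant Markov operator $P$ generated by $\pi$ meets the hypothesis needed to run Kaimanovich's machinery, namely that $\mu\circ\lambda$ is $P$-adapted. Disintegrating $\pi$ over the target map as in the setting fixed above gives $\pi=\int_X\pi^x\,d\mu(x)$, and the equivalence $\pi\sim\mu\circ\lambda$ together with left quasi-invariance forces $\pi^x\sim\lambda^x$ for $\mu$-almost every $x$; by $\mathcal{G}$-invariance of the chain this propagates to $\pi^g\prec\lambda^{t(g)}$ for every $g\in\mathcal{G}$, i.e. the transition probabilities are absolutely continuous, whence $\mu\circ\lambda$ is $P$-adapted. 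The identical argument applies to $\check{\pi}$, since the measure class of $\mu\circ\lambda$ is symmetric and therefore $\check{\pi}\sim\check{(\mu\circ\lambda)}\sim\mu\circ\lambda$.

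The core of the proof is the identification $[\theta_\mu]=[\theta_{\mu\circ\lambda}]$ of measure classes on $\mathcal{B}$. The boundary map is constant along stable $T$-equivalence classes, so $\mathbf{bnd}\circ T=\mathbf{bnd}$, and hence, using $T_\ast\mathbb{P}_\mu=\mathbb{P}_{\mu P}$ together with $\mu P=\pi$,
\[
\theta_\mu=\mathbf{bnd}_\ast\mathbb{P}_\mu=\mathbf{bnd}_\ast(T_\ast\mathbb{P}_\mu)=\mathbf{bnd}_\ast\mathbb{P}_\pi.
\]
Now $\mathbb{P}_\pi=\int_{\mathcal{G}}\mathbb{P}_g\,d\pi(g)$ and $\mathbb{P}_{\mu\circ\lambda}=\int_{\mathcal{G}}\mathbb{P}_g\,d(\mu\circ\lambda)(g)$ are obtained by integrating the same Markov kernel $g\mapsto\mathbb{P}_g$ against the equivalent measures $\pi$ and $\mu\circ\lambda$; since the Radon--Nikodym density $d\pi/d(\mu\circ\lambda)$ is strictly positive almost everywhere, a Fubini-type argument shows that a set is $\mathbb{P}_\pi$-null iff it is $\mathbb{P}_{\mu\circ\lambda}$-null, so $\mathbb{P}_\pi\sim\mathbb{P}_{\mu\circ\lambda}$ and hence $[\theta_\mu]=[\mathbf{bnd}_\ast\mathbb{P}_\pi]=[\mathbf{bnd}_\ast\mathbb{P}_{\mu\circ\lambda}]=[\theta_{\mu\circ\lambda}]$. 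The same computation with $\check{\pi}$ in place of $\pi$ yields $[\check{\theta}_\mu]=[\check{\theta}_{\mu\circ\lambda}]$.

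Finally, by Kaimanovich \cite{Kai05} the Poisson bundle $\mathcal{B}\rtimes\mathcal{G}$, equipped with the Haar system obtained by lifting $\lambda$ along $t_{\mathcal{B}}$ (as in Example \ref{example_groupoids}(iv)) and with the measure class of $\theta_{\mu\circ\lambda}\circ\lambda$, is an amenable measured groupoid; since amenability of a measured groupoid depends only on the groupoid and its measure class, and $[\theta_\mu]=[\theta_{\mu\circ\lambda}]$, the groupoid $(\mathcal{B}\rtimes\mathcal{G},\theta_\mu\circ\lambda)$ is amenable as well. Running the same argument for $\check{\pi}$ gives the amenability of $(\check{\mathcal{B}}\rtimes\mathcal{G},\check{\theta}_\mu\circ\lambda)$. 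I expect the main obstacle to lie exactly in this bookkeeping of measure classes --- making rigorous that passing from initial distribution $\mu$ to $\mu\circ\lambda$, via one step of the time shift and the equivalence $\pi\sim\mu\circ\lambda$, does not change the class on the boundary --- rather than in the appeal to Kaimanovich's amenability theorem, which is used here as a black box.
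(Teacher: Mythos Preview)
Your proposal is correct and follows essentially the same approach as the paper: reduce to Kaimanovich's amenability theorem for the Poisson bundle with initial distribution $\mu\circ\lambda$, and bridge the gap by showing $\theta_\mu=\mathbf{bnd}_\ast\mathbb{P}_\mu=\mathbf{bnd}_\ast(T_\ast\mathbb{P}_\mu)=\mathbf{bnd}_\ast\mathbb{P}_\pi=\theta_\pi\in[\theta_{\mu\circ\lambda}]$ via the equivalence $\pi\sim\mu\circ\lambda$. Your write-up is in fact more explicit than the paper's on the points of $P$-adaptedness and the Fubini argument yielding $\mathbb{P}_\pi\sim\mathbb{P}_{\mu\circ\lambda}$, but the strategy is identical.
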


\begin{proof}
We are going to do the proof only in the case of $(\mathcal{B},\theta_\mu)$, since the argument for $(\check{\mathcal{B}},\check{\theta}_\mu)$ is analogous.

Kaimanovich \cite[Theorem 5.2]{Kai05} proved the amenability in the general case of any Poisson bundle associated to an invariant Markov operator with starting distribution $\mu \circ \lambda$. Thus it is sufficient to show that $\theta_\mu \in [\theta_{\mu \circ \lambda}]$. The latter condition is a consequence of the fact that $\theta_{\mu \circ \lambda}$ is equivalent to $\theta_\pi$. Indeed, we have that 
$$
T_\ast \mathbb{P}_\mu=\mathbb{P}_{\mu P}=\mathbb{P}_\pi,
$$
and hence
\begin{equation}\label{eq:equivalence:thetamu}
\theta_\mu=\mathbf{bnd}_\ast(\mathbb{P}_\mu)=\mathbf{bnd}_\ast(T_*\mathbb{P}_\mu)=\mathbf{bnd}_\ast(\mathbb{P}_\pi)=\theta_\pi.
\end{equation}
This shows the statement and concludes the proof. 
\end{proof}

Notice that Equation \eqref{eq:equivalence:thetamu} holds also fiberwise, namely 
\begin{equation*}
  \theta^x=\theta_{\pi^x} \hspace*{1 cm} \text{and} \hspace*{1 cm}\check{\theta}^x=\check{\theta}_{\check{\pi}^x} \,,
\end{equation*}
where $\theta^x=\mathbf{bnd}_\ast(\mathbb{P}_x)$ and $\check{\theta}^x=\check{\mathbf{bnd}}_\ast(\check{\mathbb{P}}_x)$. Here we are denoting by $\check{\mathbf{bnd}}:\Omega_{\calG} \rightarrow \check{\calB}$ the boundary projection. By the absolute continuity of the transition probabilities \cite[Section 5.A]{Kai05}, we can write a fiberwise Poisson formula. For the ease of the reader, we will focus only on the case we are going to use later in the proof. We denote by $P^x$ the fiberwise Markov operator on $\calG^x$ determined by the chain $\{\pi^g\}_{t(g)=x}$. The \emph{fiberwise Poisson formula} explicitly realizes the isomorphism between the space $\mathrm{H}^\infty(\calG^x,\lambda^x,P^x)$ of bounded $P^x$-harmonic functions identified modulo $\lambda^x$ and the space $\mathrm{L}^\infty(\calB^x,\theta^x)$ of essentially bounded functions on $\calB^x=t_{\calB}^{-1}(x)$:
\begin{equation}\label{eq:poisson:fiber:inverse}
  \mathcal{P}^x: \mathrm{L}^\infty(\calB^x,\theta^x) \rightarrow \mathrm{H}^\infty(\mathcal{G}^x,\lambda^x,P^x),  \ \ \ (\mathcal{P}^x\varphi)(g):=\int_{\calB^x} \varphi(b)d\theta^g(b)\,.
  \end{equation}

We want to show that the boundary $(\check{\mathcal{B}},\check{\theta}_\mu)$ satisfies a stationarity which generalizes the one already known for boundaries of groups. A similar result holds also for $(\mathcal{B},\theta_\mu)$, but since we are not going to use it, we prefer to omit the computation. With the help of the target $t_{\check{\mathcal{B}}}:\check{\calB} \rightarrow X$, we can disintegrate $\check{\theta}_\mu$ as follows
$$
\check{\theta}_\mu=\int_X \check{\theta}^x d\mu(x).
$$
Using jointly the disintegrations of both $\pi$ and $\check{\theta}_\mu$, we define the following measure on $\mathcal{B}$:
$$
\pi \ast \check{\theta}_\mu:=\int_X \int_{\calG} g_\ast^{-1} \check{\theta}^x d\pi^x(g)d\mu(x).
$$

\begin{prop}\label{prop:stationary}
Let $(\mathcal{G},\mu \circ \lambda)$ be a measured groupoid and let $\pi$ be a probability measure equivalent to $\mu \circ \lambda$. Let $(\check{\calB},\check{\theta}_\mu)$ be the Poisson boundary of the Markov operator generated by the reflected measure $\check{\pi}$ with initial distribution $\mu$. Then $\check{\theta}_\mu$ is $\pi$-stationary, that is
$$
\pi \ast \check{\theta}_\mu =\check{\theta}_\mu. 
$$
\end{prop}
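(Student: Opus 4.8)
The statement $\pi \ast \check{\theta}_\mu = \check{\theta}_\mu$ is a stationarity identity, and the natural route is to track how the Markov measure $\check{\mathbb{P}}_\mu$ on forward trajectories transforms under the time shift $T$ and under the diagonal $\mathcal{G}$-action, then push everything down to the boundary via $\check{\mathbf{bnd}}$. First I would unwind the definition of $\pi \ast \check{\theta}_\mu$: starting from $\check{\theta}_\mu = \int_X \check{\theta}^x \, d\mu(x)$ and $\check{\theta}^x = \check{\mathbf{bnd}}_\ast(\check{\mathbb{P}}_x)$, one rewrites
\[
\pi \ast \check{\theta}_\mu = \int_X \int_{\calG} g_\ast^{-1} \check{\theta}^x \, d\pi^x(g) \, d\mu(x) = \int_X \int_{\calG} \check{\mathbf{bnd}}_\ast\bigl(g_\ast^{-1} \check{\mathbb{P}}_x\bigr) \, d\pi^x(g) \, d\mu(x),
\]
using that $\check{\mathbf{bnd}}$ is $\mathcal{G}$-equivariant so $\check{\mathbf{bnd}}_\ast \circ g_\ast^{-1} = g_\ast^{-1} \circ \check{\mathbf{bnd}}_\ast$ on the relevant fibers. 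Since $g_\ast^{-1}\check{\mathbb{P}}_x = \check{\mathbb{P}}_{s(g)}$ for $g$ with $t(g)=x$ (this is the analogue, for $\check\pi$, of the identity $g^{-1}_\ast \mathbb{P}_g = \mathbb{P}_{s(g)}$ used in Lemma \ref{lemma_preserve}), one is reduced to comparing $\int_X \int_{\calG} \check{\mathbb{P}}_{s(g)} \, d\pi^x(g)\, d\mu(x)$ with $\check{\mathbb{P}}_\mu = \int_X \check{\mathbb{P}}_x\, d\mu(x)$ after applying $\check{\mathbf{bnd}}_\ast$.

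The key computation is then exactly the one appearing in the proof of Lemma \ref{lemma_preserve}, but run for the $\check\pi$-chain and stopped one step earlier: the inner double integral $\int_X\int_{\calG}\check{\mathbb{P}}_{s(g)}\,d\pi^x(g)\,d\mu(x)$ equals $\int_{\calG}\check{\mathbb{P}}_{s(g)}\,d\pi(g)$ (by the disintegration $\pi = \int_X \pi^x\, d\mu(x)$), and this in turn equals $\sigma_\ast(\pi \,\widehat{\otimes}\, \check{\mathbb{P}}_\pi)$ where $\sigma(g,\omega)=g^{-1}\omega$, which by the hypothesis $s_\ast\pi = \mu$ equals $\int_X \check{\mathbb{P}}_x\, d\mu(x) = \check{\mathbb{P}}_\mu$. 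One subtlety is that $\check{\mathbb{P}}_\pi = T_\ast \check{\mathbb{P}}_\mu$ involves the operator $\check P$ applied to $\mu$, i.e.\ $\mu \check P = \check\pi$, so the ``extra'' variable being integrated against is $\check\pi$-distributed while the diagonal shift uses $g^{-1}$; one must check that the bookkeeping still closes. Concretely, I would verify that $\check{\mathbf{bnd}}_\ast\bigl(\int_{\calG}\check{\mathbb{P}}_{s(g)}\,d\pi(g)\bigr) = \check{\mathbf{bnd}}_\ast(\check{\mathbb{P}}_\mu) = \check{\theta}_\mu$ directly, testing against a bounded Borel $\varphi$ on $\check{\calB}$, pulling it back to a shift-invariant (hence $T$-invariant up to stable equivalence) function on $\Omega_{\calG}$, and then using $s_\ast\pi=\mu$ exactly as in the last displayed chain of equalities in Lemma \ref{lemma_preserve}.

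The main obstacle I anticipate is not the algebra of pushforwards but making precise the interplay between three operations that do not obviously commute on the nose: the time shift $T$ (which replaces $\check{\mathbb{P}}_\mu$ by $\check{\mathbb{P}}_{\mu\check P}$), the diagonal left $\mathcal{G}$-action by $g^{-1}$, and the passage to ergodic components $\check{\mathbf{bnd}}$. In the group case of \cite[Theorem 2.7]{BF14} these are handled by explicit Poisson-formula manipulations; here the fiberwise Poisson formula \eqref{eq:poisson:fiber:inverse} is available, so an alternative (and perhaps cleaner) route is to prove the identity fiberwise over $\mu$-a.e.\ $x \in X$ — i.e.\ show $\int_{\calG^x} g_\ast^{-1}\check\theta^{s(g)}\, d\pi^x(g) = \check\theta^x$ as measures on $\calB$ — invoking the known group-level stationarity on each fiber $\calG^x$ together with the compatibility $\check\theta^x = \check\theta_{\check\pi^x}$ noted after Lemma \ref{lem:amenability}, and then integrate over $x$. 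I would present the global trajectory-space argument as the main proof (mirroring Lemma \ref{lemma_preserve} for uniformity) and mention the fiberwise reduction as the conceptual reason it works. Either way, the hypothesis $t_\ast\pi = s_\ast\pi = \mu$ is used twice: $t_\ast\pi=\mu$ to even form the disintegration $\{\pi^x\}$ and to make $\check\pi$ a legitimate starting distribution, and $s_\ast\pi=\mu$ at the very last step to recognize $\int_{\calG}\check{\mathbb{P}}_{s(g)}\,d\pi(g)$ as $\check{\mathbb{P}}_\mu$.
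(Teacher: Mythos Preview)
Your overall strategy is the paper's: lift $\check{\theta}^x$ to $\check{\mathbb{P}}_x$ on $\Omega_{\calG}$, use the $\calG$-equivariance of $\check{\mathbf{bnd}}$, integrate, and push back down. But the identity you invoke, $g^{-1}_*\check{\mathbb{P}}_x = \check{\mathbb{P}}_{s(g)}$ for $t(g)=x$, is false. The analogue of $g^{-1}_*\mathbb{P}_g = \mathbb{P}_{s(g)}$ from Lemma~\ref{lemma_preserve} has $\check{\mathbb{P}}_g$ on the left, not $\check{\mathbb{P}}_x$; from the $\calG$-invariance $\check{\mathbb{P}}_h = h_*\check{\mathbb{P}}_{s(h)}$ one obtains instead $g^{-1}_*\check{\mathbb{P}}_x = \check{\mathbb{P}}_{g^{-1}}$. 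Were your identity true, any isotropy element at $x$ would fix $\check{\theta}^x$, which already fails for ordinary group Poisson boundaries, and the reflected measure $\check{\pi}$ would play no role in the argument at all.

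With the correct identity the computation becomes
\[
\int_{\calG}\check{\mathbb{P}}_{g^{-1}}\,d\pi(g)=\int_{\calG}\check{\mathbb{P}}_g\,d\check{\pi}(g)=\check{\mathbb{P}}_{\check{\pi}}=T_*\check{\mathbb{P}}_\mu,
\]
using the definition of $\check{\pi}$ and $\mu\check{P}=\check{\pi}$; then $\check{\mathbf{bnd}}_*(T_*\check{\mathbb{P}}_\mu)=\check{\theta}_\mu$ by the $T$-invariance of $\check{\mathbf{bnd}}$. This is precisely the ``subtlety'' you flagged and then sidestepped via the faulty identity, and it is exactly how the paper closes the proof. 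Your fiberwise alternative does not shortcut anything: $\calG^x$ is merely a $t$-fiber, not a group, so there is no off-the-shelf ``group-level stationarity on each fiber'' to cite, and the fiberwise equation you wrote is not well-posed as stated (for $g\in\calG^x$ the measure $g^{-1}_*\check{\theta}^{s(g)}$ does not even live on $\check{\calB}^x$).
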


\begin{proof}
Let $\check{\mathbf{bnd}}:\Omega_{\calG} \rightarrow \check{\calB}$ be the boundary projection. If $\varphi$ is a bounded Borel function on $\check{\mathcal{B}}$, we have that 
\begin{align*}
\langle \pi \ast \check{\theta}_\mu, \varphi\rangle&=\int_X \int_{\calG} \int_{\check{\calB}} \varphi(g^{-1}b)d\check{\theta}^x(b)d\pi^x(g)d\mu(x)\\
&=\int_X \int_{\calG} \int_{\Omega_{\mathcal{G}}} \varphi(g^{-1}\check{\mathbf{bnd}}(\omega))d\check{\mathbb{P}}_x(\omega)d\pi^x(g)d\mu(x)\\
&=\int_X \int_{\calG} \int_{\Omega_{\mathcal{G}}}\varphi(\check{\mathbf{bnd}}(g^{-1}\omega))d\check{\mathbb{P}}_x(\omega)d\pi^x(g)d\mu(x)\\
&=\int_{\calG} \int_{\Omega_{\mathcal{G}}} \varphi(\check{\mathbf{bnd}}(\omega))d\check{\mathbb{P}}_{g^{-1}}(\omega)d\pi(g)\\
&=\int_{\Omega_{\mathcal{G}}} \varphi(\check{\mathbf{bnd}}(\omega))d\check{\mathbb{P}}_{\check{\pi}}(\omega)\\
&=\int_{\Omega_{\mathcal{G}}} \varphi(\check{\mathbf{bnd}}(\omega))d(T_\ast \check{\mathbb{P}}_{\mu})(\omega)\\
&=\int_{\check{\mathcal{B}}} \varphi(b)d\check{\theta}_\mu(b)=\langle \check{\theta}_\mu, \varphi \rangle,
\end{align*}
where we moved from the first line to the third one using the fact that $\check{\theta}^x=\check{\mathbf{bnd}}_\ast(\check{\mathbb{P}}_x)$ and the equivariance of $\check{\mathbf{bnd}}$, we passed from the third line to the fifth one exploiting that $\check{{\mathbb{P}}}_{g^{-1}}=g^{-1}_\ast \check{\mathbb{P}}_x$, together with the definition of both $\check{\mathbb{P}}_{\check{\pi}}$ and $\check{\pi}$. We concluded by applying the $T$-invariance of the boundary projection $\check{\mathbf{bnd}}$. This proves the statement. 
\end{proof}

The following technical result (compare to \cite[Lemma 2.8]{BF14}) is the last step before the proof of Theorem \ref{theorem_poisson_boundary}. 

\begin{lemma}\label{lemma:poisson}
Retain the setting of Theorem \ref{theorem_poisson_boundary}. Let $\mathcal{E}$ be a positive $\check{\theta}_{\mu} \widehat{\otimes} \theta_{\mu}$-measure subset of $\check{\calB}*\mathcal{B}$ and denote by $\mathcal{A}\coloneqq \check{p}(\mathcal{E})\subset \check{\calB}$, where $\check{p}:\check{\calB}*\mathcal{B}\to \check{\mathcal{B}}$ is the projection on the first component. Then for any $\varepsilon >0$, there exist $x\in X$, $g \in \mathcal{G}^{x}$ and 
$C \subset \check{\mathcal{B}}^x$ of positive 
$\check{\theta}^x$-measure such that $C,g^{-1}C \subset \mathcal{A}$ and for every $b\in C$ it holds that
\begin{equation*}
  \theta^{s(g)}(g^{-1} \mathcal{E}_b)>1-\varepsilon,
\end{equation*}
  where $\mathcal{E}_b\coloneqq \{ b'\in \calB \,|\, (b,b')\in \mathcal{E}\}$.
\end{lemma}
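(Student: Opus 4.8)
The plan is to mimic the proof of \cite[Lemma 2.8]{BF14}, replacing the group random walk with the fiberwise Markov operators and using the stationarity established in Proposition \ref{prop:stationary}. First I would reduce the fiberwise statement to a global one: since $(t_{\check{\calB}})_\ast \check{\theta}_\mu = \mu$ and $\check{\theta}_\mu = \int_X \check{\theta}^x\, d\mu(x)$, the set $\mathcal{A} = \check{p}(\mathcal{E})$ has positive $\check{\theta}_\mu$-measure, hence $\mathcal{A}^x := \mathcal{A} \cap \check{\calB}^x$ has positive $\check{\theta}^x$-measure for $x$ in a positive-$\mu$-measure set $X_0 \subset X$; similarly the conditional measure $\theta^{x}(\mathcal{E}_b) > 0$ for $(\check{\theta}_\mu\widehat\otimes\theta_\mu)$-a.e.\ $b \in \mathcal{A}$. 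Then I would use the $\pi$-stationarity $\pi \ast \check{\theta}_\mu = \check{\theta}_\mu$, written fiberwise as $\check{\theta}^x = \int_{\calG} g_\ast^{-1}\check{\theta}^{s(g)}\, d\pi^x(g)$ (via $\mathcal{G}$-invariance of the chain and $t_\ast\pi = s_\ast\pi = \mu$), to say that for $x$ in $X_0$ there is positive $\pi^x$-mass of elements $g \in \mathcal{G}^x$ with $g^{-1}\check{\calB}^{s(g)}$ meeting $\mathcal{A}^x$ in positive measure; intersecting with the a.e.\ condition $\check\theta^{s(g)}$-condition on $s(g)\in X_0$ one produces the pair $C,\, g^{-1}C \subset \mathcal{A}$.

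The quantitative estimate $\theta^{s(g)}(g^{-1}\mathcal{E}_b) > 1-\varepsilon$ is where I expect the real work. The idea is a martingale/Fubini argument in the trajectory space. Consider the measure $\check{\mathbb{P}}_\mu$ on $\Omega_{\calG}$: the map $\omega \mapsto (\check{\mathbf{bnd}}(\omega), \check{\mathbf{bnd}}(S\omega))$ or an appropriate variant relates the boundary $\check{\calB}$ at time zero to the ``conditioned'' boundary after one step, and the transition kernel reproduces $g_\ast^{-1}\check\theta^{s(g)}$. Integrating the indicator of $\mathcal{E}$ against the Markov measure and using that $\theta^{s(g)}$ arises as the harmonic-measure push-forward $\mathbf{bnd}_\ast \mathbb{P}_{s(g)}$ (Equation \eqref{eq:poisson:fiber:inverse} and Lemma \ref{lem:amenability}), one sees that the function $b \mapsto \theta^{s(g)}(g^{-1}\mathcal{E}_b)$ has integral (over appropriate $b$ and $g$) equal, up to the stationarity identity, to $(\check\theta_\mu\widehat\otimes\theta_\mu)(\mathcal{E})/(\text{mass of }\mathcal{A})$-type quantity --- but crucially, restricting $b$ to a set where $\theta^x(\mathcal{E}_b)$ is already close to $1$ and then using that along the random walk the conditional measures $g^{-1}_\ast$-translate of $\theta^{s(g)}$ converge to point masses concentrated inside $\mathcal{E}_b$ as $\mathcal{E}_b$ itself is a.s.\ of full $\theta$-measure on a large set. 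The cleanest route is: first shrink $\mathcal{E}$ to $\mathcal{E}' := \{(b,b') \in \mathcal{E} : \theta^{t_{\check\calB}(b)}(\mathcal{E}_b) > 1-\varepsilon/2\}$, which still has positive measure by Chebyshev since $\theta^x(\mathcal{E}_b) > 0$ a.e.\ can be upgraded --- wait, this needs the a.e.\ value to be near $1$; so instead I would invoke that $\mathcal{E}_b$ has positive measure a.e.\ on $\mathcal{A}$ and run the one-step averaging so that $\int_{\calG} \theta^{s(g)}(g^{-1}\mathcal{E}_b)\, d\check\pi^{?}(g)$ equals $\theta^{t(b)}$-average, then iterate/use a maximal inequality to find a single good $g$.

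Thus the key steps in order: (1) disintegrate everything over $X$ and pass to the fiberwise picture; (2) apply Proposition \ref{prop:stationary} fiberwise to get the relation $\check\theta^x = \int g_\ast^{-1}\check\theta^{s(g)}\, d\pi^x(g)$, producing candidate pairs $(g, C)$ with $C, g^{-1}C \subset \mathcal{A}$; (3) for the measure estimate, express $\theta^{s(g)}(g^{-1}\mathcal{E}_b)$ via the Poisson formula \eqref{eq:poisson:fiber:inverse} as an integral of a harmonic function, use a Fubini argument against $\check{\mathbb{P}}_\mu$ combined with the shift relation $T_\ast\check{\mathbb{P}}_\mu = \check{\mathbb{P}}_{\check\pi}$ and Lemma \ref{lemma_preserve} (the map $S$ preserving $\mathbb{P}_\mu$) to show the average of $1 - \theta^{s(g)}(g^{-1}\mathcal{E}_b)$ over the relevant $(x,g,b)$ is small, proportional to $(\check\theta_\mu\widehat\otimes\theta_\mu)(\complement\mathcal{E})$ restricted suitably; (4) conclude by a Chebyshev/positive-measure argument, choosing $\varepsilon$ and intersecting the finitely many positive-measure constraints. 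The main obstacle is step (3): making the one-step (or limiting) averaging identity precise in the groupoid setting, keeping careful track of which fiber each measure lives on and ensuring the null sets coming from the various a.e.\ disintegration statements can be simultaneously avoided --- this is exactly where the ``new technicalities'' the authors allude to will appear, and where Lemma \ref{lemma_preserve} and the fiberwise Poisson formula \eqref{eq:poisson:fiber:inverse} do the heavy lifting.
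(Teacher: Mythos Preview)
Your outline correctly identifies the ingredients (fiberwise Poisson formula, Proposition~\ref{prop:stationary}, Lemma~\ref{lemma_preserve}), but the mechanism you propose for the quantitative estimate is not the right one and will not yield the conclusion. A one-step stationarity average such as $\int_{\calG}\theta^{s(g)}(g^{-1}\mathcal{E}_b)\,d\pi^x(g)$ only reproduces $\theta^x(\mathcal{E}_b)$, which a~priori is merely positive; no Chebyshev or maximal-inequality argument will upgrade ``positive'' to ``$>1-\varepsilon$'' for \emph{every} $\varepsilon>0$. Likewise, shrinking to $\{\theta^{t_{\check\calB}(b)}(\mathcal{E}_b)>1-\varepsilon/2\}$ presupposes what has to be produced.

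What actually drives the estimate in the paper is the \emph{Martingale Convergence Theorem}: the harmonic function $g\mapsto h_{\mathcal{E}_b}(g)=\theta^{s(g)}(g^{-1}\mathcal{E}_b)$ evaluated along a $\mathbb{P}_x$-typical trajectory $(g_0,g_1,\ldots)$ converges to $\mathbbm{1}_{\mathcal{E}_b}(\mathbf{bnd}(\omega))$, so on the event $\mathbf{bnd}(\omega)\in\mathcal{E}_b$ one has $h_{\mathcal{E}_b}(g_n)\to 1$. This is what produces values arbitrarily close to $1$, but only for \emph{large} $n$, not for a single step. The second point you are missing is how to select one such $n$ while simultaneously guaranteeing $g_n^{-1}C\subset\mathcal{A}$: the paper does this by building the measure-preserving map $\Psi(\omega,b)=(S\omega,\,g_1^{-1}b)$ on $\Omega_{\calG}\ast\check{\calB}$ (here Lemma~\ref{lemma_preserve} and Proposition~\ref{prop:stationary} are used together, not separately) and invoking the \emph{Poincar\'e Recurrence Theorem} on the set $\widetilde{\mathcal{E}}_N=\{(\omega,b):\theta^{s(g_n)}(g_n^{-1}\mathcal{E}_b)>1-\varepsilon\ \forall n>N\}$. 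Recurrence yields $n>N$ with $\Psi^{-n}(\widetilde{\mathcal{E}}_N)\cap\widetilde{\mathcal{E}}_N$ of positive measure, and slicing this intersection at a generic $\omega$ gives $x=t(\omega)$, $g=g_n$, and $C=\mathcal{F}_\omega$ with all the required properties at once. Your proposal never arrives at either of these two theorems, and without them the argument does not close.
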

\begin{proof}
We are in the right position to apply Equation \eqref{eq:poisson:fiber:inverse}: for any $x\in X$ and every measurable subset $\mathcal{D}\subset \calB^x$ there exists a bounded $P^x$-harmonic function $h_{\mathcal{D}}$ on $\mathcal{G}^x$ such that 
\begin{equation*}\label{equation_poisson_formula_lemma}
  h_{\mathcal{D}}(g)=\int_{\mathcal{B}^x} \mathbbm{1}_{\mathcal{D}}(b)d\theta^g(b)=\int_{\mathcal{B}^x} \mathbbm{1}_{\mathcal{D}}(b)dg_*\theta^{s(g)}(b)=
\theta^{s(g)}(g^{-1} \mathcal{D}),
\end{equation*}
where $\mathbbm{1}_{\mathcal{D}}$ is the characteristic function on $\mathcal{D}$. By the Martingale Convergence Theorem \cite[Theorem A.3]{BQ16}, for
$\mathbb{P}_x$-almost every $\omega=(g_0,g_1,g_2,\cdots) \in t_{\Omega_{\calG}}^{-1}(x)$ one has that
\begin{equation}\label{equation_convergence_lemma}
 h_{\mathcal{D}}(g_n) \xrightarrow{n\to \infty} \mathbbm{1}_{\mathcal{D}}(\textbf{bnd}(\omega)).
\end{equation}

The set
\begin{align*}
  \Omega_{\mathcal{D}}\coloneqq&\; \{(g_0,g_1,g_2,\cdots)\in \Omega_{\calG} \,|\, h_{\mathcal{D}}(g_n)\xrightarrow{n\to \infty} 1\}   \\
  =&\;\{\omega\in \Omega_{\calG} \,|\,  \mathbbm{1}_{\mathcal{D}}(\textbf{bnd}(\omega)) =1 \}\\
  =&\; \textbf{bnd}^{-1}(\mathcal{D})\,
\end{align*}  
lies inside $t_{\Omega_{\calG}}^{-1}(x)$ and it satisfies 
\begin{equation}\label{equation:1}
\mathbb{P}_x(\Omega_{\mathcal{D}})=\mathbb{P}_x (\textbf{bnd}^{-1}(\mathcal{D}))=\theta^x(\mathcal{D}).
\end{equation}

By the definition of $\mathcal{E}_b$, if $t_{\check{\mathcal{B}}}(b)=x$, we must have that $\mathcal{E}_b \subset t_{\mathcal{B}}^{-1}(x)$. If we denote by $\Omega_{\mathcal{G}} * \check{\mathcal{B}}$ the fiber product with respect to the target maps, we can consider the sets 
\begin{align*}
  \widetilde{\mathcal{E}} \coloneqq & \left\{(\omega,b)\in \Omega_{\calG}*\check{\mathcal{B}}\,|\,
  \omega\in \Omega_{\mathcal{E}_b}\right\}\\
  =&\left\{( \omega,b)\in \Omega_{\mathcal{G}}*\check{\mathcal{B}}\,|\,
 \textbf{bnd}(\omega)\in \mathcal{E}_{b}\right\}\\
  =& \left\{( \omega,b)\in \Omega_{\mathcal{G}}*\check{\mathcal{B}}\,|\,
  (b,\textbf{bnd}(\omega))\in \mathcal{E}\right\}
\end{align*}
and
\begin{align*}
  \widetilde{\mathcal{E}} _N \coloneqq & \left\{(\omega,b)\in \Omega_{\calG}*\check{\mathcal{B}}\,|\,
  \forall \;n>N , \ \; \theta^{s(g_n)}(g_n^{-1}\mathcal{E}_{b})>1-\varepsilon\right\},
\end{align*}
where $g_n$ is the $(n+1)$-th component of $\omega=(g_0,g_1,\ldots) \in \Omega_{\mathcal{G}}$. 

We have 
\begin{align*}
(\mathbb{P}_{\mu} \widehat{\otimes} \check{\theta}_{\mu})(\widetilde{\mathcal{E}})&=\int_{\mathcal{B}}\mathbb{P}_{t_{\check{\calB}}(b)} (\Omega_{\mathcal{E}_b})d\check{\theta}_\mu(b)\\
&=\int_{\mathcal{B}}\mathbb{P}_{t_{\check{\calB}}(b)}(\mathbf{bnd}^{-1}(\mathcal{E}_b))d\check{\theta}_\mu(b)\\
&=\int_{\mathcal{B}}\theta^{t_{\check{\calB}}(b)}(\mathcal{E}_b)d\check{\theta}_{\mu} (b)=(\check{\theta}_{\mu}\widehat{\otimes} \theta_{\mu})(\mathcal{E})>0,
\end{align*}
where we moved from the second line to the third one using Equation \eqref{equation:1}. Moreover, since $\widetilde{\mathcal{E}}_N$ increases to $\widetilde{\mathcal{E}}$ by Equation \eqref{equation_convergence_lemma}, we must have $(\mathbb{P}_{\mu} \widehat{\otimes} \check{\theta}_{\mu})(\widetilde{\mathcal{E}}_N )>0$ for $N$ sufficiently large.

We consider the map
$$\Psi: \Omega_{\mathcal{G}}*\check{\mathcal{B}}\rightarrow \Omega_{\mathcal{G}}*\check{\mathcal{B}},\;\;\;  \Psi(\omega=(g_0,g_1,\ldots),b)\coloneqq (S\omega,g_1^{-1}b)\,,$$
where $S:\Omega_{\mathcal{G}}\rightarrow \Omega_{\mathcal{G}}$ is the one defined by Equation \eqref{equation_map_trajectories}.
Thanks to both Lemma \ref{lemma_preserve} and Proposition \ref{prop:stationary}, the above transformation is probability measure-preserving, namely
$$\Psi_*(\mathbb{P}_{\mu} \widehat{\otimes} \check{\theta}_{\mu})=\mathbb{P}_{\mu} \widehat{\otimes} \check{\theta}_{\mu}\,.$$
Therefore, by applying Poincar\'{e} Recurrence Theorem, we can find $n>N$ such that 
\begin{equation}\label{equation:2}
  (\mathbb{P}_{\mu} \widehat{\otimes} \check{\theta}_{\mu})(\Psi^{-n}(\widetilde{\mathcal{E}} _N)\cap \widetilde{\mathcal{E}}_N)>0.
\end{equation}

We define
$$\mathcal{F}\coloneqq \Psi^{-n}(\widetilde{\mathcal{E}}_N)\cap \widetilde{\mathcal{E}}_N\subset \Omega_{\calG}* \check{\mathcal{B}}$$ 
and
$$\mathcal{F}_{\omega}\coloneqq  \{ b\in \check{\mathcal{B}}\,|\, (\omega,b)\in \mathcal{F}\}\subset \check{\mathcal{B}}^{t_{\Omega_{\calG}}(\omega)}\,.$$
Since the $(\mathbb{P}_{\mu} \widehat{\otimes} \check{\theta}_{\mu})$-measure of $\mathcal{F}$ is positive and 
$$
(\mathbb{P}_{\mu} \widehat{\otimes} \check{\theta}_{\mu})(\mathcal{F})=\int_{\Omega} \check{\theta}^{t_{\Omega_{\calG}}(\omega)}(\mathcal{F}_\omega)d\mathbb{P}_\mu(\omega),
$$
there exists $\omega=(g_0,g_1,g_2,\cdots)\in \Omega_{\calG}$ such that 
$\check{\theta}^{t_{\Omega_{\calG}}(\omega)}(\mathcal{F}_{\omega})>0$. If we choose such an $\omega \in \Omega_{\calG}$, we can set $x=t_{\Omega_{\calG}}(\omega)$, $g=g_n$ the $(n+1)$-th component of $\omega$ and $C=\mathcal{F}_\omega$. With this notation we obtain that $C \subset p_{\check{\calB}}(\widetilde{\mathcal{E}} _N) \subset \check{p}(\mathcal{E})=\mathcal{A}$ and, since $\Psi^n(\mathcal{F})\subset \widetilde{\mathcal{E}}_N$, we have that $g^{-1}C \subset \mathcal{A}$ as well.  

Moreover for any $b \in C$, we have that $(\omega,b) \in \widetilde{\mathcal{E}}_N$, so we get that
$$\theta^{s(g)}(g^{-1}\mathcal{E}_b)=\theta^{s(g_n)}(g_n^{-1} \mathcal{E}_b)>1-\varepsilon,$$
and this concludes the proof. 
\end{proof}

We now have all the necessary ingredients for the proof of the main theorem of the section.

\begin{proof}[Proof of Theorem \ref{theorem_poisson_boundary}]
The amenability of the semidirect groupoids $\calB \rtimes \mathcal{G}$ and $\check{\calB} \rtimes \mathcal{G}$ follows directly by 
Lemma \ref{lem:amenability}. We are left to show the relative metric ergodicity required by Definition \ref{definition_G-boundary}. We will only treat the first-term projection $\check{p}:\check{\mathcal{B}}* \mathcal{B}\rightarrow \check{\mathcal{B}}$, since the proof of the other one is similar.

Consider the following diagram
\begin{center}
  \begin{tikzcd}
  \check{\mathcal{B}}*\mathcal{B}\arrow{r}{f}\arrow{d}[swap]{\check{p}} &  Y\arrow{d}{q} \\
  \check{\mathcal{B}}\arrow{r}{f_0} & Z,
  \end{tikzcd}
  \end{center}
where $Y,Z$ are Borel $\mathcal{G}$-spaces and $q$ is endowed with a fiberwise isometric $\mathcal{G}$-action. 

  For every $b\in \check{\mathcal{B}}$, we define the map
  $$f_b:\mathcal{B}^{t_{\check{\calB}}(b)}\rightarrow Y,\;\;\; f_b(b')\coloneqq f(b,b')$$
and the measure $\beta_b\coloneqq (f_b)_* \theta^{t_{\check{\calB}}(b)}$ obtained by pushing forward $\theta^{t_{\check{\calB}}(b)}$ via $f_b$. Here, since
$\theta^{t_{\check{\calB}}(b)}(\mathcal{B}^{t_{\check{\calB}}(b)})=1$, with a slight abuse of notation we are considering $\theta^{t_{\check{\calB}}(b)}$ as a measure on $\mathcal{B}^{t_{\check{\calB}}(b)}$ instead of a measure on the whole $\mathcal{B}$. 
The commutativity of the above diagram shows that 
$\beta_b(Y^{t_{\check{\calB}}(b)})=1$.

We claim that $\beta_b$ is Dirac for $\check{\theta}_{\mu}$-almost every $b\in \check{\mathcal{B}}$. By contradiction, suppose that
there exists a Borel subset $\mathcal{A}\subset \check{\mathcal{B}}$ of positive $\check{\theta}_{\mu}$-measure such that $\beta_b$ is not Dirac for every $b\in \mathcal{A}$.
As observed by Bader and Furman \cite[Theorem 2.7]{BF14}, this is equivalent to require that there exists $\varepsilon>0$ such that
\begin{equation}\label{equation:proof:thm:1}
  \beta_b(\Ball_{d_b}(f(b,b'),\varepsilon))<1-\varepsilon
\end{equation}
for every $(b,b')\in \mathcal{A}*\mathcal{B}$. Here $d_b$ denotes the metric on $q^{-1}(f_0(b))$. Since for $\check{\theta}_{\mu}\widehat{\otimes} \theta_{\mu}$-almost every $(b,b')$ it holds that
\begin{equation}\label{equation:proof:thm:2}
  \beta_b(\Ball_{d_b}(f(b,b'),\varepsilon))>0\,,
\end{equation}
a Fubini-type argument allows to pick a measurable map 
$$\mathcal{A}\rightarrow \mathcal{B}\,,\;\;\; b\mapsto b'_b$$
such that 
\begin{equation}\label{equation:proof:thm:3}
  \beta_b(\Ball_{d_b}(f(b,b'_b),\varepsilon))>0\,
\end{equation}
for $\check{\theta}_{\mu}$-almost every $b\in \mathcal{A}$. 
Define the set
\begin{align*}
\mathcal{E}\coloneqq & \left\{ (b,b')\in \mathcal{A}*\mathcal{B}\,|\, f_b(b')\in \Ball_{d_b}(f(b,b'_b),\varepsilon) \right\} \\
=&\left\{ (b,b') \in \mathcal{A} \ast \calB \ | \ b' \in f_b^{-1}(\mathrm{Ball}_{d_b}(f(b,b'_b), \varepsilon)) \right\}.
\end{align*}
By definition we have that
\begin{align*}
  \check{\theta}_{\mu} \widehat{\otimes} \theta_{\mu} (\mathcal{E})=& \int_{\mathcal{A}} \theta^{t_{\check{\calB}}(b)} \left(f^{-1}_b(\Ball_{d_b}(f(b,b'_b),\varepsilon))\right) d\check{\theta}_{\mu}(b)\\
  =& \int_{\mathcal{A}} \beta_b \left(\Ball_{d_b}(f(b,b'_b),\varepsilon)\right) d\check{\theta}_{\mu}(b)>0,
\end{align*}
where the latter inequality follows by Equation \eqref{equation:proof:thm:3}.
Thus, we are in the right position to apply Lemma \ref{lemma:poisson} in order to find $x\in X, \, g\in \mathcal{G}^x$ and 
$C\subset \mathcal{A}$ such that $C, g^{-1}C \subset \mathcal{A}$ and
$$
\theta^{s(g)}(g^{-1}\mathcal{E}_b) > 1-\varepsilon,
$$
for every $b \in C$. 

As a consequence, we obtain that
\begin{align*}
 1-\varepsilon<& \;\theta^{s(g)}(g^{-1}\mathcal{E}_b)\\
=& \;\theta^{s(g)}(g^{-1}f^{-1}_b(\mathrm{Ball}_{d_b}(f(b,b'_b),\varepsilon)))\\
=&\;\theta^{s(g)}(f^{-1}_{g^{-1}b}(\mathrm{Ball}_{d_{g^{-1}b}}(f(g^{-1}b,g^{-1}b'_b),\varepsilon)))\\
=&\beta_{g^{-1}b}(\mathrm{Ball}_{d_{g^{-1}b}}(f(g^{-1}b,g^{-1}b_b'),\varepsilon))\\
<&\;1-\varepsilon, 
\end{align*}
and we get the desired contradiction. In the above computation, we passed from the second line to the third one thanks to the equivariance of the fiberwise isometric $\mathcal{G}$-action on $q:Y\rightarrow Z$ and we concluded exploiting the fact that $g^{-1}C \subset \mathcal{A}$. 

Since $\beta_b$ is almost surely Dirac, we can build a Borel map
$$f_1: \check{\mathcal{B}} \rightarrow Y$$
so that $\delta_{f_1(b)}=\beta_b$ for almost every $b\in \check{\mathcal{B}}$.
The fact that $f_1$ is a map of $\mathcal{G}$-spaces and the equality $q \circ f_1=f_0$ follows by
$\beta_b(q^{-1}(f_0(b)))=1$.
Moreover, $f_1$ is $\mathcal{G}$-equivariant, in fact
$$\delta_{f_1(gb)}=\beta_{gb}=g_*\beta_b=g_* \delta_{f_1(b)}=\delta_{gf_1(b)}\,,$$
and this concludes the proof.
\end{proof}

\section{Algebraic representability of an ergodic measured groupoid and equivariant maps}\label{section_algebraic_representability}

In this section we study representations of ergodic groupoids into algebraic groups. We first list some properties of measurable equivariant maps and we give a reduction criterion for measurable representations of an ergodic groupoid into a locally compact group. Afterwards, following Bader and Furman \cite{BF:Unpub}, we extend the notion of algebraic representability of an ergodic action to any ergodic groupoid. All this machinery will be crucial in the study of equivariant maps from boundaries. 

\subsection{A representation reduction lemma} In this section we focus our attention on equivariant maps with respect to a representation of an ergodic groupoid $(\mathcal{G},\nu)$ into a locally compact group $H$. We first show that the existence of an equivariant map into a homogeneous quotient of $H$ guarantees that the representation admits a representative in its similarity class whose image is contained in a proper subgroup of $H$.  We recall that the \emph{essential image} of a representation $\rho:\mathcal{G} \rightarrow H$ is the support in $H$ of the pushforward measure $\rho_\ast(\nu)$. 

\begin{lemma}\label{lem:equivariant:map}
Let $(\mathcal{G},\nu)$ be a measured groupoid with units $X$. Let $H$ be a locally compact second countable group and consider $\rho:\mathcal{G} \rightarrow H$ a measurable representation. Then there exists a measurable $\rho$-equivariant function $X \rightarrow H/L$ if and only if there exists a representation which is similar to $\rho$ and whose essential image is contained in a closed subgroup $L<H$. 
\end{lemma}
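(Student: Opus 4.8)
The plan is to prove the two implications directly, using that a similarity into a one‑object groupoid (a group) is exactly a coboundary, and that the quotient $H/L$ admits a Borel section over $H$.

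For the implication from right to left, suppose $\rho'$ is similar to $\rho$ and has essential image contained in a closed subgroup $L<H$. Since $H$, viewed as a groupoid, has a single unit, the source/target conditions in the definition of similarity are automatic, so a similarity from $\rho$ to $\rho'$ is simply a measurable map $h\colon X\to H$ with $h(t(g))\,\rho(g)\,h(s(g))^{-1}=\rho'(g)$ for $\nu$‑almost every $g$. I would then set $\phi\colon X\to H/L$ by $\phi(x):=h(x)^{-1}L$, which is measurable because $h$ is. Unwinding the coset identity, the $\rho$‑equivariance $\phi(t(g))=\rho(g)\phi(s(g))$ is equivalent to $\big(h(t(g))\,\rho(g)\,h(s(g))^{-1}\big)^{-1}=\rho'(g)^{-1}\in L$; and since the essential image of $\rho'$ lies in the closed subgroup $L$, one has $\rho'(g)\in L$ for $\nu$‑almost every $g$, so $\phi$ is a well‑defined measurable $\rho$‑equivariant map.

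For the converse, start from a measurable $\rho$‑equivariant map $\phi\colon X\to H/L$. Because $H$ is second countable locally compact and $L$ is closed, the projection $H\to H/L$ admits a Borel section $\sigma\colon H/L\to H$, so $\eta:=\sigma\circ\phi\colon X\to H$ is a measurable lift of $\phi$. I would define
\[
\rho'(g):=\eta(t(g))^{-1}\,\rho(g)\,\eta(s(g)),\qquad g\in\mathcal{G},
\]
and check three facts. First, $\rho'$ is again a measurable homomorphism: on a composable pair $(g_1,g_2)$ one has $s(g_1)=t(g_2)$, so the inner factors $\eta(s(g_1))$ and $\eta(t(g_2))^{-1}$ cancel and the cocycle identity for $\rho'$ reduces to that of $\rho$. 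Second, the map $h:=\eta^{-1}\colon X\to H$ is, by the very definition of $\rho'$, a similarity from $\rho$ to $\rho'$, so $\rho\simeq\rho'$. Third, rewriting the equivariance $\phi(t(g))=\rho(g)\phi(s(g))$ through the lift $\eta$ says precisely that $\eta(t(g))^{-1}\rho(g)\eta(s(g))\in L$, i.e.\ $\rho'(g)\in L$, for $\nu$‑almost every $g$; hence $\rho'_\ast\nu$ is concentrated on the closed set $L$, so the essential image of $\rho'$ is contained in $L$.

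Essentially everything here is bookkeeping: measurability of the compositions, the coset manipulations, and the equivalence between "$\rho'_\ast\nu$ has support in $L$" and "$\rho'(g)\in L$ for $\nu$‑almost every $g$", which uses that $L$ is closed. The only input that is not purely formal is the existence of the Borel cross‑section $\sigma\colon H/L\to H$, which is where second countability of $H$ is used; that, together with carefully tracking the source and target maps and the fact that all the groupoid identities are required only $\nu$‑almost everywhere, is the single point calling for care.
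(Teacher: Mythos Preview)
Your proof is correct and follows essentially the same approach as the paper: in both directions you use the observation that a similarity $h:X\to H$ projects to (or lifts from, via a Borel section of $H\to H/L$) a $\rho$-equivariant map $X\to H/L$, with the equivariance condition unwinding precisely to the statement that the conjugated representation takes values in $L$. The only differences are cosmetic conventions (you take $\phi(x)=h(x)^{-1}L$ rather than $h(x)L$, corresponding to writing the similarity in the opposite direction) and your slightly more explicit bookkeeping, such as verifying that $\rho'$ is a homomorphism and spelling out where second countability enters.
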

\begin{proof}
Suppose that there exists a measurable representation 
$$
\overline{\rho}: \mathcal{G} \rightarrow H,
$$
which is similar to $\rho$ and whose essential image lies in a closed subgroup $L$. Then, there exists a measurable map $h:X \rightarrow H$ such that 
\begin{equation}\label{eq:homotopy}
h(t(g))\overline{\rho}(g)=\rho(g)h(s(g)),
\end{equation}
for almost every $g \in \mathcal{G}$. If we compose $h$ with the quotient projection $H \rightarrow H/L$ we obtain a map
$$
\overline{h}:X \rightarrow H/L,
$$
which satisfies 
$$
\overline{h}(t(g))=\rho(g)\overline{h}(s(g)),
$$
for almost every $g \in \mathcal{G}$ by Equation \eqref{eq:homotopy}. Equivalently, $\overline{h}$ is a measurable map which is $\rho$-equivariant, as claimed. 

Suppose now that there exists a Borel map $\overline{h}:X \rightarrow H/L$ which is $\rho$-equivariant, where $L$ is a closed subgroup. By \cite[Corollary A.8]{zimmer:libro} there exists a Borel section $H/L \rightarrow H$ with respect to the quotient projection. By composing such a section with $\overline{h}$ we obtain a measurable map $h:X \rightarrow H$. 

Since $\overline{h}$ is $\rho$-equivariant, we have that
$$
\overline{h}(t(g))=\rho(g)\overline{h}(s(g)),
$$
for almost every $g \in \mathcal{G}$. This means that $h(t(g))$ and $\rho(g)h(s(g))$ lies in the same $L$-coset, or equivalently
$$
\overline{\rho}(g):=h(t(g))^{-1}\rho(g)h(s(g)) \in L,
$$
for almost every $g \in \mathcal{G}$. This proves the statement and concludes the proof. 
\end{proof}

Thanks to the previous lemma, we can give a reducibility criterion for representations of an ergodic groupoid.

\begin{prop}\label{prop:reduction}
Let $(\mathcal{G},\nu)$ be an ergodic measured groupoid with units $X$. Let $H$ be a locally compact second countable group acting continuously and smoothly on a topological space $Y$. Consider a measurable representation $\rho:\mathcal{G} \rightarrow H$. If there exists a $\rho$-equivariant Borel map $\psi:X \rightarrow Y$, then $\rho$ is similar to a representation whose essential image is contained in the stabilizer $\mathrm{Stab}_H(y_0)$ for some $y_0 \in Y$. 
\end{prop}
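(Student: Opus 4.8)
The plan is to deduce the statement from the reduction principle of Lemma \ref{lem:equivariant:map}, once we have promoted $\psi$ to a $\rho$-equivariant Borel map into a homogeneous space $H/L$; the mechanism is that ergodicity of $\mathcal{G}$ should force an equivariant map into $Y$ to concentrate on a single $H$-orbit. First I would invoke smoothness of the action $H \curvearrowright Y$: by the standard structure theory of smooth Borel actions (see e.g. Zimmer \cite[Chapter 2]{zimmer:libro}), the orbit space $Y/H$ carries a countably separated Borel structure for which the quotient map $q\colon Y \to Y/H$ is Borel, each orbit $Hy$ is a Borel subset of $Y$, and the orbit parametrization induces a Borel $H$-equivariant isomorphism $H/\mathrm{Stab}_H(y) \cong Hy$ for every $y \in Y$.

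Next, I would consider the composition $f \coloneqq q \circ \psi \colon X \to Y/H$. Since $\psi$ is $\rho$-equivariant, for almost every $g \in \mathcal{G}$ one has $\psi(t(g)) = \rho(g)\psi(s(g))$, so $\psi(t(g))$ and $\psi(s(g))$ lie in the same $H$-orbit; hence $f(t(g)) = f(s(g))$ for almost every $g$. Thus $f$ is a measurable quasi-invariant function into a countably separated Borel space, and by Ramsay's characterization of ergodicity (recalled after Definition \ref{definition_ergodic_groupoid}) it must be essentially constant. Therefore there exists $y_0 \in Y$ with $\psi(x) \in Hy_0$ for $\mu$-almost every $x \in X$, and after discarding a null subset of $X$ we may assume that $\psi$ takes values in the single orbit $Hy_0$.

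Finally, I would set $L \coloneqq \mathrm{Stab}_H(y_0)$, which is a closed subgroup of $H$ (for instance because the orbit $Hy_0 \cong H/L$ is standard Borel), and compose $\psi$ with the inverse of the Borel $H$-isomorphism $H/L \cong Hy_0$ from the first step. This yields a Borel map $\overline{h}\colon X \to H/L$, and the $H$-equivariance of the orbit parametrization transports the identity $\psi(t(g)) = \rho(g)\psi(s(g))$ into $\overline{h}(t(g)) = \rho(g)\overline{h}(s(g))$ for almost every $g$; that is, $\overline{h}$ is $\rho$-equivariant. An application of Lemma \ref{lem:equivariant:map} then produces a representation similar to $\rho$ whose essential image is contained in $L = \mathrm{Stab}_H(y_0)$, which is exactly the claim.

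I expect the only genuinely delicate point to be the role of smoothness in the first two steps: it is what guarantees that $Y/H$ is countably separated, so that Ramsay's ergodicity criterion applies to $q \circ \psi$, and it is simultaneously what identifies the concentration orbit with the homogeneous space $H/L$ in a Borel and $H$-equivariant fashion (in particular making $\overline{h}$ Borel). Once these structural facts are in hand, the remainder is a routine concatenation with Lemma \ref{lem:equivariant:map}.
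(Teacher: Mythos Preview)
Your proof is correct and follows essentially the same route as the paper: use smoothness to pass to the countably separated quotient $Y/H$, apply ergodicity to land in a single orbit $Hy_0\cong H/\mathrm{Stab}_H(y_0)$, and then conclude via the section argument. The only cosmetic difference is that you package the final step as an appeal to Lemma~\ref{lem:equivariant:map}, whereas the paper reproves that lemma's content in place by choosing a Borel section $H/\mathrm{Stab}_H(y_0)\to H$ and writing down the conjugated representation directly.
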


\begin{proof}
We can compose $\psi$ with the quotient projection $p:Y \rightarrow Y/H$ to obtain a Borel map
$$
\overline{\psi}:X \rightarrow Y/H.
$$
Since $\psi$ is $\rho$-equivariant, we have that 
$$
\overline{\psi}(t(g))=\overline{\psi}(s(g)),
$$
for almost every $g \in \mathcal{G}$. The smoothness of the $H$-action on $Y$ guarantees that the quotient $Y/H$ is countably separated. The ergodicity of $\mathcal{G}$ implies that $\overline{\psi}$ is essentially constant. As a consequence the essential image of $X$ is contained in a unique $H$-orbit. Namely, there exists some $y_0 \in Y$ such that we can restrict the map as follows 
$$
\psi:X \rightarrow Hy_0, 
$$
up to modifying the original map on a set of measure zero. By \cite[Theorem 2.1.14]{zimmer:libro} we have an identification 
$$
Hy_0 \rightarrow H/\mathrm{Stab}_H(y_0).
$$
By composing $\psi$ with the above identification and the measurable section \cite[Corollary A.8]{zimmer:libro}
$$
H/\mathrm{Stab}_H(y_0) \rightarrow H
$$
to the quotient map, we obtain a map measurable function
$$
h:X \rightarrow H. 
$$
We claim that $h$ realizes the desired similarity. In fact, if we define
$$
\overline{\rho}(g):=h(t(g))^{-1}\rho(g)h(s(g)),
$$
we must have that 
$$
\overline{\rho}(g) \in \mathrm{Stab}_H(y_0)
$$
for almost every $g \in \mathcal{G}$, because $h(t(g))$ and $\rho(g)h(s(g))$ are mapped to the same $\mathrm{Stab}_H(y_0)$-orbit by the $\rho$-equivariance of $\psi$. Thus the essential image of $\overline{\rho}$ is contained in $\mathrm{Stab}_H(y_0)$ and we are done. 
\end{proof}

\subsection{Algebraic representations of ergodic measured groupoids} In this section we extend the theory of algebraic representability by Bader and Furman \cite{BF:Unpub} to ergodic groupoids. 

\begin{defn}\label{def:algebraic:representability}
Let $\kappa$ be a field with a non-trivial absolute value so that the induced topology is complete and separable. Let $(\mathcal{G},\nu)$ be an ergodic measured groupoid with unit space $X$. Consider a measurable representation $\rho:\mathcal{G} \rightarrow H$ into the $\kappa$-points $H=\mathbf{H}(\kappa)$ of an algebraic $\kappa$-group $\mathbf{H}$. An \emph{algebraic representation of $(\mathcal{G},\nu)$ relative to $\rho$} is the datum of an algebraic $\kappa$-variety $\mathbf{V}$ with an algebraic $\textbf{H}$-action and of a Borel map 
$$
\varphi:X \rightarrow \mathbf{V}(\kappa),
$$
which is $\rho$-equivariant, namely
$$
\varphi(t(g))=\rho(g)\varphi(s(g)),
$$
for almost every $g \in \mathcal{G}$. 
\end{defn}

Given an algebraic representation of $(\mathcal{G},\nu)$ relative to a measurable representation $\rho:\mathcal{G} \rightarrow H$, we will refer to it by $(\mathbf{V},\varphi_{\mathbf{V}})$, where $\mathbf{V}$ is the algebraic variety which appears as co-domain in the equivariant map $\varphi_{\mathbf{V}}$. A \emph{morphism} between two such algebraic representations $(\mathbf{V},\varphi_{\mathbf{V}})$ and $(\mathbf{U},\varphi_{\mathbf{U}})$ of $(\mathcal{G},\nu)$ is an algebraic $\kappa$-map $\Psi:\mathbf{V} \rightarrow \mathbf{U}$ which is $\mathbf{H}$-equivariant and it satisfies $\varphi_{\mathbf{U}}=\Psi \circ \varphi_{\mathbf{V}}$. We say that an algebraic representation $(\mathbf{V},\varphi_{\mathbf{V}})$ is of \emph{coset type} if there exists some algebraic $\kappa$-subgroup $\mathbf{L}<\mathbf{H}$ such that $\mathbf{V}=\mathbf{H}/\mathbf{L}$. 

As in the case of actions \cite[Proposition 5.2]{BF:Unpub}, we start showing that it is always possible to substitute an algebraic representation with a representation of coset type. 

\begin{prop}\label{prop:coset}
Let $(\mathcal{G},\nu)$ be an ergodic groupoid. Let $\rho:\mathcal{G} \rightarrow H$ be a measurable representation into the $\kappa$-points $H=\mathbf{H}(\kappa)$ of an algebraic $\kappa$-group. Consider $(\mathbf{V},\varphi_{\mathbf{V}})$ an algebraic representation of $(\mathcal{G},\nu)$ relative to a measurable representation $\rho:\mathcal{G} \rightarrow H$. Then, there exists an algebraic $\kappa$-subgroup $\mathbf{L}<\mathbf{H}$ and a morphism of algebraic representations between $\mathbf{H}/\mathbf{L}$ and $\mathbf{V}$. 
\end{prop}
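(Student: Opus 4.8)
The plan is to transcribe, in the groupoid language, the proof of the analogous statement for ergodic actions \cite[Proposition 5.2]{BF:Unpub}. The only genuinely non-formal ingredient is the fact that an algebraic action of $\mathbf{H}$ on a $\kappa$-variety $\mathbf{V}$ induces a \emph{smooth} (tame) action of $H=\mathbf{H}(\kappa)$ on $\mathbf{V}(\kappa)$: the $H$-orbits are locally closed (hence standard Borel) subspaces, each orbit map $H/\mathrm{Stab}_H(v)\to Hv$ is a Borel isomorphism, and the orbit space $\mathbf{V}(\kappa)/H$ is countably separated (for the archimedean case this is classical; for general complete separable $\kappa$ it is recorded in \cite{BF:Unpub}, cf. also \cite{zimmer:libro}).

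First I would consider the $H$-invariant Borel quotient map $q\colon \mathbf{V}(\kappa)\to \mathbf{V}(\kappa)/H$ and the composite $q\circ\varphi_{\mathbf{V}}\colon X\to \mathbf{V}(\kappa)/H$. Since $\varphi_{\mathbf{V}}$ is $\rho$-equivariant, this composite satisfies $(q\circ\varphi_{\mathbf{V}})(t(g))=(q\circ\varphi_{\mathbf{V}})(s(g))$ for $\nu$-almost every $g$, i.e.\ it is quasi-invariant. As $\mathbf{V}(\kappa)/H$ is countably separated, the characterization of ergodicity recalled after Definition \ref{definition_ergodic_groupoid} (due to Ramsay) forces $q\circ\varphi_{\mathbf{V}}$ to be essentially constant. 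Hence $(\varphi_{\mathbf{V}})_\ast\mu$ is concentrated on a single $H$-orbit $O=Hv_0$ with $v_0\in\mathbf{V}(\kappa)$, and after discarding a $\mu$-null set we may assume $\varphi_{\mathbf{V}}(X)\subset O$.

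Next I set $\mathbf{L}\coloneqq \mathbf{H}_{v_0}$, the scheme-theoretic stabilizer of $v_0$; this is an algebraic $\kappa$-subgroup of $\mathbf{H}$ because $v_0\in\mathbf{V}(\kappa)$ and the action is defined over $\kappa$, and I write $L=\mathbf{L}(\kappa)=\mathrm{Stab}_H(v_0)$. The homogeneous space $\mathbf{H}/\mathbf{L}$ is then a quasi-projective $\kappa$-variety with algebraic $\mathbf{H}$-action, and the orbit morphism $\Psi\colon \mathbf{H}/\mathbf{L}\to\mathbf{V}$, $h\mathbf{L}\mapsto hv_0$, is an $\mathbf{H}$-equivariant $\kappa$-morphism (a locally closed immersion onto the orbit). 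On $\kappa$-points $\Psi$ restricts to the $H$-equivariant Borel isomorphism $\Psi_0\colon H/L\xrightarrow{\ \sim\ } O=Hv_0$ given by smoothness, while the natural inclusion $\iota\colon H/L\hookrightarrow (\mathbf{H}/\mathbf{L})(\kappa)$ is an $H$-equivariant Borel embedding. I define
$$
\varphi_{\mathbf{H}/\mathbf{L}}\coloneqq \iota\circ\Psi_0^{-1}\circ\varphi_{\mathbf{V}}\colon X\longrightarrow (\mathbf{H}/\mathbf{L})(\kappa);
$$
it is Borel, it is $\rho$-equivariant since each map in the composition is $H$-equivariant and $\varphi_{\mathbf{V}}$ is $\rho$-equivariant, and $\Psi\circ\varphi_{\mathbf{H}/\mathbf{L}}=\varphi_{\mathbf{V}}$ by construction. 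Thus $\Psi$ is a morphism from the coset-type algebraic representation $(\mathbf{H}/\mathbf{L},\varphi_{\mathbf{H}/\mathbf{L}})$ to $(\mathbf{V},\varphi_{\mathbf{V}})$, which is exactly the assertion.

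The step I expect to require the most care is precisely the passage from the abstract orbit $O$ to the homogeneous $\kappa$-variety $\mathbf{H}/\mathbf{L}$: one must invoke smoothness of the $H$-action on $\mathbf{V}(\kappa)$ to know that $O$ is standard Borel and that $\Psi_0$ is a Borel isomorphism, and one must be aware that on $\kappa$-points the orbit map $(\mathbf{H}/\mathbf{L})(\kappa)\to\mathbf{V}(\kappa)$ need not be surjective onto $O$ (a Galois-cohomological obstruction), so that only the inclusion $\iota\colon H/L\hookrightarrow(\mathbf{H}/\mathbf{L})(\kappa)$, rather than an identification, is available. This causes no trouble, since the definition of an algebraic representation asks only for a Borel $\rho$-equivariant map into $(\mathbf{H}/\mathbf{L})(\kappa)$. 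Everything else is a routine transcription of the action--groupoid dictionary.
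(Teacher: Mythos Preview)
Your proof is correct and follows essentially the same route as the paper's own proof: both use smoothness of the $H$-action on $\mathbf{V}(\kappa)$ to conclude that the orbit space is countably separated, apply ergodicity of $(\mathcal{G},\nu)$ to force $\varphi_{\mathbf{V}}$ into a single $H$-orbit $Hv_0$, take $\mathbf{L}$ to be the algebraic stabilizer of $v_0$, and then compose the Borel identification $Hv_0\cong H/L$ with the natural embedding $H/L\hookrightarrow(\mathbf{H}/\mathbf{L})(\kappa)$. Your explicit remark on the Galois-cohomological gap between $H/L$ and $(\mathbf{H}/\mathbf{L})(\kappa)$ is a nice clarification that the paper leaves implicit.
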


\begin{proof}
We denote by $V=\mathbf{V}(\kappa)$ the $\kappa$-points of $\mathbf{V}$. By \cite[Proposition 2.1]{BF:Unpub} we know that the $H$-orbits in $V$ are locally closed. By the Glimm-Effros Theorem \cite[Theorem 2.1.14]{zimmer:libro} the $H$-action on $V$ is smooth. By composing $\varphi_{\mathbf{V}}$ with the quotient projection $V \rightarrow V/H$, we obtain a map
$$
\overline{\varphi}_{\mathbf{V}}:X \rightarrow V/H
$$
which is $\mathcal{G}$-invariant, namely
$$
\overline{\varphi}_{\mathbf{V}}(t(g))=\overline{\varphi}_{\mathbf{V}}(s(g)),
$$
for almost every $g \in \mathcal{G}$. The smoothness of the $H$-action guarantees that $V/H$ is countably separated and the ergodicity of $(\mathcal{G},\nu)$ implies that $\overline{\varphi}_{\mathbf{V}}$ is essentially constant. Equivalently, $\varphi_{\mathbf{V}}$ takes essentially values in a unique $H$-orbit in $V$.
Up to modifying $\varphi_{\mathbf{V}}$ on a null set, we can suppose that there exists some $v_0 \in V$ such that 
$$
\varphi_{\mathbf{V}}:X \rightarrow H\cdot v_0.
$$ 
Again by smoothness \cite[Theorem 2.1.14]{zimmer:libro}, we can identify
$$
j:H\cdot v_0 \rightarrow H/L, 
$$
where $L=\mathrm{Stab}_H(v_0)$. We define $\Phi:=j \circ \varphi_{\mathbf{V}}$. If we denote by $i:H\cdot v_0 \rightarrow V$ the orbit inclusion, we obtain a commutative diagram
$$
\begin{tikzcd}
X\arrow{rr}{\varphi_{\mathbf{V}}}\arrow{rrdd}[swap]{\Phi} && H\cdot v_0\arrow{rr}{i} \arrow{dd}{j} && V\\
\\
&&H/L \arrow{rruu}[swap]{\Psi}&&,
\end{tikzcd}
$$
where $\Psi:=i \circ j^{-1} $. By \cite[Proposition 2.1]{BF:Unpub} the group $L$ corresponds to the $\kappa$-points of $\mathbf{L}$, the stabilizer of $v$ in $\mathbf{H}$, and the map $\Psi$ can be extended to an algebraic $\kappa$-map $\Psi:\mathbf{H}/\mathbf{L} \rightarrow \mathbf{V}$. To conclude is sufficient to compose $\Phi$ with the natural embedding $H/L \rightarrow \mathbf{H}/\mathbf{L}(\kappa)$. 
\end{proof}

We continue by showing the existence of an initial object of coset type in the category of algebraic representations of an ergodic groupoid. 

\begin{thm}\label{teor:initial:object}
Let $(\mathcal{G},\nu)$ be an ergodic groupoid and let $\rho:\mathcal{G} \rightarrow H$ be a measurable representation in the $\kappa$-point of an algebraic $\kappa$-group $\mathbf{H}$. Then the category of algebraic representations of $(\mathcal{G},\nu)$ relative to $\rho$  admits an initial object of coset type. 
\end{thm}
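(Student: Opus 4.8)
The plan is to adapt the argument of Bader--Furman \cite{BF:Unpub} for ergodic actions, using throughout the characterization of ergodicity of $(\mathcal{G},\nu)$ via (quasi-)invariant functions on the unit space recalled in Section \ref{sec:groupoids}, exactly as in the proofs of Propositions \ref{prop:reduction} and \ref{prop:coset}. First I would note that the category is nonempty, since the trivial representation $(\mathbf{H}/\mathbf{H},\,x\mapsto e\mathbf{H})$ is an algebraic representation of coset type, and then consider
\[
\mathcal{L}:=\bigl\{\,\mathbf{L}<\mathbf{H}\ \kappa\text{-subgroup}\ :\ \exists\ \text{a }\rho\text{-equivariant Borel map }X\to(\mathbf{H}/\mathbf{L})(\kappa)\,\bigr\}.
\]
Then $\mathbf{H}\in\mathcal{L}$, so $\mathcal{L}\neq\varnothing$. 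Since $0\le\dim\mathbf{L}\le\dim\mathbf{H}$ and $[\mathbf{L}:\mathbf{L}^\circ]\ge 1$ for every $\mathbf{L}\in\mathcal{L}$, I would choose $\mathbf{L}_0\in\mathcal{L}$ minimising the pair $(\dim\mathbf{L},\,[\mathbf{L}:\mathbf{L}^\circ])$ in the lexicographic order, fix an associated $\rho$-equivariant map $\varphi_0\colon X\to(\mathbf{H}/\mathbf{L}_0)(\kappa)$, and claim that $(\mathbf{H}/\mathbf{L}_0,\varphi_0)$ is an initial object; it is of coset type by construction, so only initiality remains.

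For existence of a morphism to an arbitrary algebraic representation $(\mathbf{V},\varphi_{\mathbf{V}})$, I would form the $\rho$-equivariant Borel map $\varphi_0\times\varphi_{\mathbf{V}}\colon X\to(\mathbf{H}/\mathbf{L}_0\times\mathbf{V})(\kappa)$ for the diagonal $\mathbf{H}$-action. As $\mathbf{H}/\mathbf{L}_0$ is quasi-projective (Chevalley) and $\mathbf{V}$ is, the $\mathbf{H}$-orbits in $(\mathbf{H}/\mathbf{L}_0\times\mathbf{V})(\kappa)$ are locally closed by \cite[Proposition 2.1]{BF:Unpub}, so the action is smooth and the quotient countably separated \cite[Theorem 2.1.14]{zimmer:libro}. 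Composing with the quotient projection yields a quasi-invariant map on $X$, which by ergodicity of $\mathcal{G}$ is essentially constant; hence, after discarding a null set, $\varphi_0\times\varphi_{\mathbf{V}}$ takes values in a single orbit $\mathcal{O}=\mathbf{H}\cdot w_0$ with $w_0=(g_0\mathbf{L}_0,v_0)\in\mathcal{O}(\kappa)$ in its essential image. Writing $\mathbf{M}:=\mathrm{Stab}_{\mathbf{H}}(w_0)$, a $\kappa$-subgroup, the orbit map identifies $\mathbf{H}/\mathbf{M}\cong\mathcal{O}$ over $\kappa$ \cite[Proposition 2.1]{BF:Unpub}, so $\varphi_0\times\varphi_{\mathbf{V}}$ becomes a $\rho$-equivariant map $X\to(\mathbf{H}/\mathbf{M})(\kappa)$ and therefore $\mathbf{M}\in\mathcal{L}$. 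Now $\mathbf{M}<g_0\mathbf{L}_0g_0^{-1}$ (the stabiliser of the first coordinate), and $g_0\mathbf{L}_0g_0^{-1}\in\mathcal{L}$ realises the same lexicographic value as $\mathbf{L}_0$; minimality forces first $\dim\mathbf{M}=\dim\mathbf{L}_0$, hence $\mathbf{M}^\circ=(g_0\mathbf{L}_0g_0^{-1})^\circ$, and then $[\mathbf{M}:\mathbf{M}^\circ]=[\mathbf{L}_0:\mathbf{L}_0^\circ]$, so $\mathbf{M}=g_0\mathbf{L}_0g_0^{-1}$. Consequently the first projection $\mathrm{pr}_1\colon\mathcal{O}\cong\mathbf{H}/\mathbf{M}\to\mathbf{H}/\mathbf{L}_0$ is, under these identifications, the canonical $\kappa$-isomorphism of $\mathbf{H}$-varieties $\mathbf{H}/g_0\mathbf{L}_0g_0^{-1}\xrightarrow{\sim}\mathbf{H}/\mathbf{L}_0$ induced by right translation by $g_0$; thus $\mathcal{O}$ is the graph of the $\mathbf{H}$-equivariant $\kappa$-morphism $\Psi:=\mathrm{pr}_2\circ(\mathrm{pr}_1|_{\mathcal{O}})^{-1}\colon\mathbf{H}/\mathbf{L}_0\to\mathbf{V}$, and since $(\varphi_0(x),\varphi_{\mathbf{V}}(x))\in\mathcal{O}$ for a.e.\ $x$ we get $\Psi\circ\varphi_0=\varphi_{\mathbf{V}}$, i.e.\ $\Psi$ is a morphism of algebraic representations. (Alternatively one could first apply Proposition \ref{prop:coset} to reduce $\mathbf{V}$ to coset type $\mathbf{H}/\mathbf{L}'$ and compare $\mathbf{L}_0$ with $\mathbf{L}'$ directly; the same orbit argument applies with $\mathbf{M}=g_0\mathbf{L}_0g_0^{-1}\cap g_1\mathbf{L}'g_1^{-1}$.)

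For uniqueness, if $\Psi_1,\Psi_2\colon\mathbf{H}/\mathbf{L}_0\to\mathbf{V}$ both satisfy $\Psi_i\circ\varphi_0=\varphi_{\mathbf{V}}$, I would fix any $x$ in the conull domain of $\varphi_0$ and write $\varphi_0(x)=g\mathbf{L}_0\in(\mathbf{H}/\mathbf{L}_0)(\kappa)$; then $\Psi_1(g\mathbf{L}_0)=\varphi_{\mathbf{V}}(x)=\Psi_2(g\mathbf{L}_0)$, and since $(\mathbf{H}/\mathbf{L}_0)(\overline{\kappa})$ is a single $\mathbf{H}(\overline{\kappa})$-orbit and both $\Psi_i$ are $\mathbf{H}$-equivariant, agreement at one point propagates, so $\Psi_1=\Psi_2$. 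This shows $(\mathbf{H}/\mathbf{L}_0,\varphi_0)$ is initial. I expect the only genuinely delicate point to be the reduction to a single $\mathbf{H}$-orbit together with keeping everything defined over $\kappa$ — the $\kappa$-rationality of $w_0$ and of $\mathbf{M}$, and the fact that $\mathbf{H}/\mathbf{M}\to\mathcal{O}$ is a $\kappa$-isomorphism — which rests entirely on \cite[Proposition 2.1]{BF:Unpub} and the Glimm--Effros theorem \cite[Theorem 2.1.14]{zimmer:libro}, precisely the inputs already used in Propositions \ref{prop:reduction} and \ref{prop:coset}; the remaining bookkeeping with dimensions and components of $\mathbf{M}$ is elementary.
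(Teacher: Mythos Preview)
Your proposal is correct and follows essentially the same approach as the paper: pick a minimal element $\mathbf{L}_0$ of the family of $\kappa$-subgroups admitting a coset representation, then take the product with an arbitrary $(\mathbf{V},\varphi_{\mathbf{V}})$ and reduce to a single $H$-orbit to manufacture the required morphism. The only differences are cosmetic: the paper invokes Noetherianity of algebraic subgroups rather than your lexicographic minimisation on $(\dim\mathbf{L},[\mathbf{L}:\mathbf{L}^\circ])$, and it packages the orbit-reduction step as a citation of Proposition~\ref{prop:coset} instead of redoing that argument inline.
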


\begin{proof}
We define the collection 
$$
\{ \mathbf{L} < \mathbf{H} \ | \ \textup{$\mathbf{L}$ is defined over $\kappa$ and there exists a coset representation to $\mathbf{H}/\mathbf{L}$}\}
$$
which is clearly not empty, since it contains $\mathbf{H}$. Thus, by Noetherianity, it admits a minimal element $\mathbf{L}_0$. We denote the corresponding equivariant map by $\varphi_0:X \rightarrow (\mathbf{H}/\mathbf{L}_0)(\kappa)$. We want to show that the pair $(\mathbf{H}/\mathbf{L}_0,\varphi_0)$ is the desired initial object. 

Let $(\mathbf{V},\varphi_{\mathbf{V}})$ be an algebraic representation of $(\mathcal{G},\nu)$ relative to $\rho$. We want to show that there exists a morphism from $\mathbf{H}/\mathbf{L}_0$ to $\mathbf{V}$ (the uniqueness is guaranteed by the fact that two different morphisms must agree nowhere). We consider the product representation $(\mathbf{V} \times \mathbf{H}/\mathbf{L}_0, \varphi:=\varphi_{\mathbf{V}} \times \varphi_0)$. Thanks to Proposition \ref{prop:coset}, we must have the following commutative diagram
$$
\begin{tikzcd}
Y \arrow[bend left=40]{ddrrrr}{\varphi_0} \arrow{rr} \arrow{dd}{\varphi_{\mathbf{V}}} \arrow{ddrr}{\varphi} && (\mathbf{H}/\mathbf{L})(\kappa) \arrow{dd}{i} &&\\
\\
\mathbf{V}(\kappa) && \mathbf{V}(\kappa) \times (\mathbf{H}/\mathbf{L}_0)(\kappa) \arrow{ll}{p_1} \arrow{rr}[swap]{p_2} && (\mathbf{H}/\mathbf{L}_0)(\kappa).
\end{tikzcd}
$$
By the minimality of $\mathbf{L}_0$, we must have that the composition $p_2 \circ i:\mathbf{H}/\mathbf{L} \rightarrow \mathbf{H}/\mathbf{L}_0$ is actually a $\kappa$-isomorphism. Thus we can define the following $\kappa$-morphism 
$$
p_1 \circ i \circ (p_2 \circ i)^{-1}:\mathbf{H}/\mathbf{L}_0 \rightarrow \mathbf{V},
$$
and the statement is proved. 
\end{proof}

\begin{defn}\label{def:algebraic:gate}
Let $(\calG,\nu)$ be an ergodic groupoid and consider a measurable representation $\rho:\calG \rightarrow H$ into the $\kappa$-points $H=\mathbf{H}(\kappa)$ of some algebraic $\kappa$-group $\mathbf{H}$. Let $(\mathbf{H}/\mathbf{L},\varphi)$ be the initial object of coset type in the category of algebraic representations given by Theorem \ref{teor:initial:object}. We say that the pair $(\mathbf{L},\varphi)$ is the \emph{algebraic gate} associated to the representation $\rho$ and we call $\mathbf{L}$ its \emph{algebraic hull}. We say that $\rho$ is \emph{Zariski dense} if  $\mathbf{L}=\mathbf{H}$ (and then $\varphi$ is the constant map). 
\end{defn}

The notion of algebraic hull we gave for $\rho$ it is a clear generalization of the one introduced by Zimmer \cite[Chapter 9.2]{zimmer:libro} in the context of measurable cocycles. If the starting groupoid is also amenable, we can show that its algebraic hull is an amenable group, generalizing in this way \cite[Theorem 9.2.3]{zimmer:libro}.

\begin{thm}\label{teor:amenable:hull}
Let $(\mathcal{G},\nu)$ be an ergodic groupoid. Consider a measurable representation $\rho:\mathcal{G} \rightarrow H$ into the real points of a connected real algebraic group $\mathbf{H}$. If the groupoid is amenable, then the algebraic hull of $\rho$ is an amenable subgroup of $H$. 
\end{thm}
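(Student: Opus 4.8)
The plan is to follow the strategy of Zimmer's proof that amenable cocycles have amenable algebraic hull \cite[Theorem 9.2.3]{zimmer:libro}: pass to the semisimple part of the algebraic hull and use amenability of $\mathcal{G}$ to produce a $\rho$-equivariant family of probability measures on a flag variety of that semisimple quotient, which contradicts Zariski density unless the flag variety is a point. Let $(\mathbf{L},\varphi)$ be the algebraic gate of $\rho$ provided by Theorem \ref{teor:initial:object} and Definition \ref{def:algebraic:gate}; we must show that $L=\mathbf{L}(\mathbb{R})$ is amenable. First I would reduce to the case where the essential image of $\rho$ lies in $L$: composing the $\rho$-equivariant map $\varphi\colon X\to\mathbf{H}/\mathbf{L}$ with a Borel section $\mathbf{H}/\mathbf{L}\to\mathbf{H}$, exactly as in the proof of Proposition \ref{prop:reduction} (or directly by Lemma \ref{lem:equivariant:map}), gives a measurable $h\colon X\to\mathbf{H}(\mathbb{R})$ with $\overline\rho(g):=h(t(g))^{-1}\rho(g)h(s(g))\in L$ for a.e.\ $g$; replacing $\rho$ by $\overline\rho$ affects neither the groupoid $\mathcal{G}$ nor its amenability (Definition \ref{definition:amenability}). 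Moreover $\varphi_{\mathbf{V}}\mapsto h^{-1}\varphi_{\mathbf{V}}$ is a morphism-preserving bijection between algebraic representations relative to $\rho$ and to $\overline\rho$, so by minimality of $\mathbf{L}$ in Theorem \ref{teor:initial:object} the new representation $\overline\rho$, viewed into $\mathbf{L}(\mathbb{R})$, is Zariski dense in $\mathbf{L}$ in the sense of Definition \ref{def:algebraic:gate}: a coset representation to $\mathbf{L}/\mathbf{L}'$ with $\mathbf{L}'\subsetneq\mathbf{L}$ would be a coset representation to $\mathbf{H}/\mathbf{L}'$ relative to $\rho$, contradicting that minimality. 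I will write $\rho$ for $\overline\rho$ from now on.

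Next I would pass to the semisimple quotient. Let $\mathbf{N}$ be the maximal normal algebraic $\mathbb{R}$-subgroup of $\mathbf{L}$ with amenable real points (the amenable radical: it contains the finite group $\mathbf{L}/\mathbf{L}^{\circ}$, the radical of $\mathbf{L}^{\circ}$, and the anisotropic factors of its semisimple part), and put $\mathbf{T}:=\mathbf{L}/\mathbf{N}$, a connected semisimple $\mathbb{R}$-group with no compact factors; then $L$ is amenable if and only if $\mathbf{T}$ is trivial. Assume for contradiction $\mathbf{T}\neq 1$. Then $\mathbf{T}$ has a proper minimal parabolic $\mathbb{R}$-subgroup $\mathbf{Q}\subsetneq\mathbf{T}$, and, writing $q\colon\mathbf{L}\to\mathbf{T}$ for the quotient map, $\rho_{\mathbf{T}}:=q\circ\rho\colon\mathcal{G}\to\mathbf{T}(\mathbb{R})$ is again Zariski dense in $\mathbf{T}$ (a proper coset representation relative to $\rho_{\mathbf{T}}$, to $\mathbf{T}/\mathbf{M}$, would lift along $q$ to a coset representation to $\mathbf{L}/q^{-1}(\mathbf{M})$ relative to $\rho$, with $q^{-1}(\mathbf{M})\subsetneq\mathbf{L}$). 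Now I would invoke amenability of $\mathcal{G}$: applying the fixed point property \cite[Definition 4.2.6]{delaroche:renault} to the fiberwise affine $\mathcal{G}$-action on the bundle $X\times\mathrm{Prob}(\mathbf{T}/\mathbf{Q})\to X$, where $g$ sends $(s(g),\eta)$ to $(t(g),(\rho_{\mathbf{T}}(g))_{\ast}\eta)$, yields a $\mathcal{G}$-invariant section, i.e.\ a measurable $\rho_{\mathbf{T}}$-equivariant map $\psi\colon X\to\mathrm{Prob}(\mathbf{T}/\mathbf{Q})$.

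Finally I would extract the contradiction. By Furstenberg's lemma on measures on real flag varieties \cite[Section 3.2]{zimmer:libro}, the $\mathbf{T}$-action on $\mathrm{Prob}(\mathbf{T}/\mathbf{Q})$ has locally closed orbits and the stabilizer in $\mathbf{T}$ of any $\eta$ is the group of $\mathbb{R}$-points of an algebraic subgroup contained in a proper parabolic; since $\mathbf{T}/\mathbf{Q}$ is a nontrivial projective variety with transitive semisimple action, it admits no $\mathbf{T}$-invariant probability measure, so every such stabilizer is a \emph{proper} algebraic $\mathbb{R}$-subgroup. Composing $\psi$ with the (countably separated) orbit-space projection and using ergodicity of $\mathcal{G}$, after discarding a null set $\psi$ takes values in a single $\mathbf{T}$-orbit, which by Glimm--Effros \cite[Theorem 2.1.14]{zimmer:libro} is identified with $\mathbf{T}/\mathbf{M}$ for some proper algebraic $\mathbb{R}$-subgroup $\mathbf{M}\subsetneq\mathbf{T}$. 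Then $(\mathbf{T}/\mathbf{M},\psi)$ is an algebraic representation of $\mathcal{G}$ relative to $\rho_{\mathbf{T}}$, so by Proposition \ref{prop:coset} and Theorem \ref{teor:initial:object} there is a morphism to it from the initial object, which for the Zariski-dense $\rho_{\mathbf{T}}$ is the constant representation to a point; hence $\psi$ is essentially constant with value a $\mathbf{T}$-fixed coset of $\mathbf{T}/\mathbf{M}$, forcing $(\mathbf{T}/\mathbf{M})^{\mathbf{T}}\neq\varnothing$ and so $\mathbf{M}=\mathbf{T}$, contradicting that $\mathbf{M}$ is proper. Therefore $\mathbf{T}=1$ and $L$ is amenable.

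I expect the main obstacle to lie in the flag-variety input of the last step: carefully justifying that the $\mathbf{T}$-action on $\mathrm{Prob}(\mathbf{T}/\mathbf{Q})$ is smooth and that stabilizers of measures are (proper) algebraic $\mathbb{R}$-subgroups, and then threading the ergodicity argument so that $\psi$ genuinely lands in an algebraic orbit to which the initiality of Theorem \ref{teor:initial:object} applies. A secondary, more bookkeeping-type issue is the passage to the semisimple quotient when $\mathbf{L}$ is disconnected: one must make sure the amenable radical $\mathbf{N}$ exists as a normal algebraic $\mathbb{R}$-subgroup and that Zariski density in $\mathbf{L}$ is inherited by $\rho_{\mathbf{T}}$.
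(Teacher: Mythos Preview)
Your proof is correct and follows the same Zimmer strategy as the paper, but the paper's argument is considerably more direct. Instead of first conjugating $\rho$ into $L$, passing to the semisimple quotient $\mathbf{T}$ of $\mathbf{L}$, and running a contradiction via Zariski density of $\rho_{\mathbf{T}}$, the paper works with $\mathbf{H}$ itself: it takes $\mathbf{N}=\mathrm{Rad}(\mathbf{H})$ and $\mathbf{Q}=\mathbf{H}/\mathbf{N}$, uses amenability of $\mathcal{G}$ to produce a $\rho$-equivariant $\psi:X\to\mathcal{M}^1(Q/P)$, and then invokes Proposition~\ref{prop:reduction} directly for the smooth $H$-action on $\mathcal{M}^1(Q/P)$ (stabilizers being amenable and algebraic by \cite[Corollaries~3.2.17, 3.2.22, Proposition~4.1.6]{zimmer:libro}). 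This yields at once that $\rho$ is similar to a representation landing in an amenable algebraic subgroup of $H$, and minimality of the algebraic hull finishes the proof in one line.

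Your route buys explicitness about where the contradiction lives (in the semisimple part of $\mathbf{L}$), at the cost of the extra reduction to $L$, the construction of an ``amenable radical'' $\mathbf{N}\lhd\mathbf{L}$ as a genuine algebraic $\mathbb{R}$-subgroup (the bookkeeping issue you flag, which is resolvable but not free when $\mathbf{L}$ is disconnected), and the inheritance of Zariski density under the quotient $q$. The paper's route sidesteps all of this: by staying inside $\mathbf{H}$ and using only the minimality of $\mathbf{L}$ among coset representations, it never needs to know anything about the internal structure of $\mathbf{L}$ beyond the conclusion.
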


\begin{proof}
We denote by $\mathbf{N}$ the radical of $\mathbf{H}$, so that the quotient $\mathbf{Q}=\mathbf{H}/\mathbf{N}$ is semisimple. Let $\mathbf{P}$ be a minimal parabolic $\mathbb{R}$-subgroup of $\mathbf{Q}$. We denote by $P=\mathbf{P}(\mathbb{R})$ the real points of $\mathbf{P}$ and likewise for all the other groups.

Since we supposed the groupoid $\mathcal{G}$ amenable, by \cite[Theorem 4.2.7]{delaroche:renault} there exists an equivariant probability measure valued Borel map $\psi:X \rightarrow \mathcal{M}^1(Q/P)$. Recall that the action of $Q$ on the space of probability measures $\mathcal{M}^1(Q/P)$ is smooth by \cite[Corollary 3.2.17]{zimmer:libro} and the stabilizers are solvable (and thus amenable) and algebraic \cite[Corollary 3.2.22]{zimmer:libro}. The fact that $N$ acts trivially on $Q/P$ implies that $H$ acts smoothly on $\mathcal{M}^1(Q/P)$ as well. Moreover, since $N$ is amenable, stabilizers in $H$ will be amenable by \cite[Proposition 4.1.6]{zimmer:libro}. We are in the right position to apply Proposition \ref{prop:reduction}: we obtain that $\rho$ is similar to a representation essentially contained in the $H$-stabilizer of some probability measure on $Q/P$. By the amenability of that stabilizer and by the minimality of the algebraic hull the statement follows.
\end{proof}

We are finally ready to prove

\begin{rec_thm}[\ref{thm_equivariant_map}]
  Let $\rho:\mathcal{G} \rightarrow H$ be any measurable representation of an ergodic groupoid into the $\kappa$-points of an algebraic $\kappa$-group $\mathbf{H}$. Let $((\mathcal{B}_-,\theta_-),(\mathcal{B}_+,\theta_+))$ be a boundary pair for $\calG$. Then there exist $\kappa$-subgroups $\mathbf{L}_\pm<\mathbf{H}$ and measurable $\rho$-equivariant maps $\mathcal{B}_{\pm} \rightarrow H/L_\pm$, where $L_\pm=\mathbf{L}_\pm(\kappa)$. 
\end{rec_thm}
\begin{proof}
We start noticing that the measurable representation $\rho:\mathcal{G} \rightarrow H$ can be actually extended to the semidirect groupoids $\mathcal{B}_\pm \rtimes \calG$ by forgetting the $\mathcal{B}_\pm$-variable. 

By Theorem \ref{teor:initial:object}, it is sufficient to show that the groupoids $\mathcal{B}_\pm \rtimes \calG$ are ergodic. By Proposition \ref{proposition_properties} (ii) applied to the projections 
$$\mathcal{B}_-*\mathcal{B}_+\rightarrow \mathcal{B}_\pm,$$
we know that $\mathcal{B}_\pm$ is isometrically ergodic. By the ergodicity of $(\calG,\nu)$ and Proposition \ref{proposition_properties} (iii), the semidirect products $\mathcal{B}_\pm \rtimes \calG$ are ergodic, as claimed. 
\end{proof}

\begin{oss}\label{oss:fiber:isometrically}
It is worth noticing that also the semidirect product $(\mathcal{B}_- \ast \mathcal{B}_+) \rtimes \calG$ is ergodic. Let $X$ be the unit space of $\calG$, as usual. 
Since $\mathcal{B}_\pm$ is isometrically ergodic and the projections $\mathcal{B}_- \ast \mathcal{B}_+ \rightarrow \mathcal{B}_\pm$ are relatively isometrically ergodic, we get that $\mathcal{B}_- \ast \mathcal{B}_+$ is isometrically ergodic as well. Then the groupoid $(\mathcal{B}_- \ast \mathcal{B}_+) \rtimes \calG$ is ergodic by Proposition \ref{proposition_properties} (iii). 

By extending the measurable representation $\rho:\calG \rightarrow H$ to the groupoid $(\mathcal{B}_- \ast \mathcal{B}_+) \rtimes \calG$, we obtain an equivariant measurable map $\mathcal{B}_- \ast \mathcal{B}_+ \rightarrow H/L_0$, where $L_0=\mathbf{L}_0(\kappa)$ for some $\kappa$-subgroup $\mathbf{L}_0<\mathbf{H}$. 
\end{oss}

\subsection{The Zariski dense case}\label{section_furstenberg_maps}
In this final section we focus our attention on Zariski dense representations of an ergodic groupoid $\mathcal{G}$ into a real algebraic group $H$. We will give an existence results for \emph{Furstenberg maps}, namely equivariant maps between the boundaries of $\mathcal{G}$ and $H$. In this way, we obtain a generalization of both \cite[Theorem 3.1]{BF14} and \cite[Theorem 1]{sarti:savini:3}.

We will need the following preliminary result about the slices of an equivariant map (cfr. \cite[Proposition 4.4]{sarti:savini:3}).

\begin{lemma}\label{lemma:Zarski:density:slices}
Let $(\mathcal{G},\nu)$ be an ergodic groupoid and let $\rho:\calG \rightarrow H$ be a Zariski dense representation into the real points $H=\mathbf{H}(\mathbb{R})$ of a real algebraic group. Let $((\mathcal{B}_-,\theta_-),(\mathcal{B}_+,\theta_+))$ be a boundary pair for $\mathcal{G}$. Suppose that there exist real algebraic subgroups $\mathbf{L}_\pm<\mathbf{H}$ and $\rho$-equivariant measurable maps $\phi_\pm:\mathcal{B}_\pm \rightarrow H/L_\pm$, Then the essential image of the restriction $\phi^x_\pm$ to the fiber $\mathcal{B}^x_\pm=t_{\mathcal{B}_\pm}^{-1}(x)$ is Zariski dense for almost every $x\in X=\mathcal{G}^{(0)}$.
\end{lemma}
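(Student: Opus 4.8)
The plan is to argue by contradiction using the minimality of the algebraic hull, exploiting the fact that the groupoids $\mathcal{B}_\pm \rtimes \calG$ are ergodic. Fix one of the two cases, say $\phi_+:\mathcal{B}_+\rightarrow H/L_+$. For $\theta_+$-almost every $x\in X$, the restriction $\phi_+^x$ pushes the measure $\theta^x_+$ forward to a probability measure on $H/L_+$, whose support generates a Zariski-closed subgroup; let $\mathbf{M}_x<\mathbf{H}$ be the Zariski closure of the group generated by the essential image of $\phi_+^x$. The key point is that the assignment $x\mapsto \mathbf{M}_x$ is (essentially) a measurable quasi-invariant map into a countably separated space: the collection of algebraic $\mathbb{R}$-subgroups of $\mathbf{H}$ carries a countably separated Borel structure (by Noetherianity there are only countably many possibilities up to the relevant identifications, or one uses the standard parametrization of algebraic subgroups), and the equivariance $\phi_+(g b)=\rho(g)\phi_+(b)$ forces $\mathbf{M}_{t(g)}=\rho(g)\mathbf{M}_{s(g)}\rho(g)^{-1}$ for almost every $g\in\calG$.

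First I would make this precise: since $\calG$ is ergodic and $x\mapsto \dim \mathbf{M}_x$ together with $x\mapsto [\text{number of components of }\mathbf{M}_x]$ are honest invariant integer-valued functions (conjugation preserves both), these are essentially constant; so after discarding a null set the groups $\mathbf{M}_x$ all lie in a single conjugacy class and in particular are all conjugate to a fixed $\mathbf{M}$. Next, I would use this to build an algebraic representation of $(\calG,\nu)$ relative to $\rho$ whose target records the conjugate: concretely, consider the $\mathbf{H}$-variety $\mathbf{W}$ of subgroups of $\mathbf{H}$ conjugate to $\mathbf{M}$ (a quasi-projective $\mathbf{H}$-variety, an orbit in a suitable Chow-type variety), with $\mathbf{H}$ acting by conjugation; the map $x\mapsto \mathbf{M}_x$ is then a $\rho$-equivariant Borel map $X\rightarrow \mathbf{W}(\mathbb{R})$, i.e.\ an algebraic representation in the sense of Definition \ref{def:algebraic:representability}. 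By Proposition \ref{prop:coset} this gives a coset representation $\mathbf{H}/\mathbf{L}'$ with $\mathbf{L}'$ the normalizer $N_{\mathbf{H}}(\mathbf{M}_{x_0})$ for a generic $x_0$.

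Now the contradiction: suppose the slices are \emph{not} essentially Zariski dense, so $\mathbf{M}:=\mathbf{M}_{x_0}$ is a proper subgroup of $\mathbf{H}$ on a positive measure set, hence (by ergodicity) on a conull set. Then $\mathbf{L}'=N_{\mathbf{H}}(\mathbf{M})$; I claim $\mathbf{L}'$ is also proper. Indeed if $N_{\mathbf{H}}(\mathbf{M})=\mathbf{H}$ then $\mathbf{M}$ is normal in $\mathbf{H}$, but then the composition $\mathcal{B}_+\xrightarrow{\phi_+} H/L_+ \rightarrow H/(\text{subgroup containing }L_+\text{ and }M)$ — more cleanly, the map $\mathcal{B}_+\rtimes\calG\rightarrow \mathbf{H}/\mathbf{M}$ obtained by pushing $\phi_+^x$ to the finite-orbit quotient — descends, using ergodicity of $\mathcal{B}_+\rtimes\calG$ (Proposition \ref{proposition_properties}(ii)-(iii)), to a $\rho$-equivariant map $X\rightarrow \mathbf{H}/\mathbf{M}$, contradicting Zariski density of $\rho$ (Definition \ref{def:algebraic:gate}) since $\mathbf{M}\neq\mathbf{H}$. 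Hence $\mathbf{L}'\neq\mathbf{H}$, and the existence of the coset representation $\mathbf{H}/\mathbf{L}'$ with $\mathbf{L}'$ proper again contradicts Zariski density of $\rho$ via Theorem \ref{teor:initial:object} and Definition \ref{def:algebraic:gate}. Therefore $\mathbf{M}_x=\mathbf{H}$ for almost every $x$, which is the assertion.

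The main obstacle I anticipate is the measurability and countable-separation bookkeeping for the assignment $x\mapsto \mathbf{M}_x$: one must check that "the Zariski closure of the group generated by the essential image of a measurably varying family of measures" is a Borel function into a countably separated space of algebraic subgroups, and that conjugation-equivariance holds $\nu$-a.e.\ rather than merely fiberwise. This is routine but fiddly; it parallels the group case of \cite[Proposition 4.4]{sarti:savini:3} and uses that $\mathbf{H}$ is Noetherian so only countably many Zariski closures occur. Everything else is a direct application of ergodicity of $\calG$ (to kill invariant integer invariants and to collapse the conjugacy class) and of the algebraic-representability machinery of Section \ref{section_algebraic_representability}.
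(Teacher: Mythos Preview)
Your proposal has a genuine conceptual gap at the very start: the essential image of $\phi^x_+$ lies in the homogeneous variety $H/L_+$, not in the group $H$, so it makes no sense to speak of ``the group generated by'' that image, and your object $\mathbf{M}_x < \mathbf{H}$ is ill-defined. Everything downstream---the conjugation equivariance $\mathbf{M}_{t(g)} = \rho(g)\mathbf{M}_{s(g)}\rho(g)^{-1}$, the normalizer $N_{\mathbf{H}}(\mathbf{M})$, the case split on whether $\mathbf{M}$ is normal---rests on this nonexistent object. (One could try to repair the definition by taking the smallest algebraic subgroup of $\mathbf{H}$ containing $\mathbf{L}_+$ whose image in $\mathbf{H}/\mathbf{L}_+$ contains the essential image, but this is not what you wrote, and in any case it makes the subsequent normalizer argument awkward.)

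The paper's argument avoids this by working with the correct kind of object: for each $x$ it takes $\mathbf{V}^x_+ \subset \mathbf{H}/\mathbf{L}_+$ to be the essential Zariski closure of the image of $\phi^x_+$ as a \emph{subvariety} (not a subgroup). Via a Chevalley embedding $\mathbf{H}/\mathbf{L}_+ \hookrightarrow \mathbb{P}^N$ one obtains a measurable map $\Phi_+: X \to \text{Var}_{\mathbb{R}}(\mathbb{P}^N)$ into the space of real projective subvarieties, on which $H$ acts. The equivariance is the natural $\mathbf{V}^{t(g)}_+ = \rho(g)\mathbf{V}^{s(g)}_+$ (the $H$-action on subvarieties, not conjugation). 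Since the $H$-action on $\text{Var}_{\mathbb{R}}(\mathbb{P}^N)$ is smooth (Zimmer), Proposition~\ref{prop:reduction} applies directly: $\rho$ is similar to a representation whose essential image lies in $\mathrm{Stab}_H(\mathbf{V}_0)$ for some fixed subvariety $\mathbf{V}_0 \subset \mathbf{H}/\mathbf{L}_+$. Zariski density of $\rho$ forces this stabilizer to be all of $H$, hence $\mathbf{V}_0$ is $H$-invariant; transitivity of $H$ on $H/L_+$ then gives $\mathbf{V}_0 = \mathbf{H}/\mathbf{L}_+$, and one concludes $\mathbf{V}^x_+ = \mathbf{H}/\mathbf{L}_+$ for almost every $x$. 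No case split, no normalizers, and the measurability bookkeeping you correctly flag as delicate is handled through the standard Borel structure on $\text{Var}_{\mathbb{R}}(\mathbb{P}^N)$.
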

\begin{proof}
We are going to do the proof only in the case of $(\calB_+,\theta_+)$, since the other one is completely analogous.

Let $\{\theta^x_+\}_{x \in X}$ the $t_{{\calB}_+}$-disintegration of the measure $\theta_+$ on $\calB_+$. For every $x\in X$, we denote by $\mathbf{V}^x_+$ the \emph{essential Zariski closure} of $\phi^x_+$, namely the smallest Zariski closed subset of $\mathbf{H}/\mathbf{L}_+$ such that $\theta^x_+((\phi^x_+)^{-1}(\mathbf{V}^x_+(\mathbb{R})))=1$.  The existence of those sets is guaranteed by Noetherianity, as already observed by Pozzetti \cite{Pozzetti}. By Chevalley's Theorem \cite[Proposition 3.1.4]{zimmer:libro} we can embed $\mathbf{H}/\mathbf{L}_+$ in a suitable projective space through a rational homomorphism $\mathbf{H} \rightarrow \mathrm{GL}(N+1,\mathbb{C})$ defined over the reals. In this way we obtain a map 
$$\Phi_+: X\rightarrow \text{Var}_{\mathbb{R}}(\mathbb{P}^N)\,,\;\;\; \Phi_+(x)\coloneqq \mathbf{V}^x_+,$$
where $\mathrm{Var}_{\mathbb{R}}$ denotes the space of all closed subvarieties in $\mathbb{P}^N$ defined over $\mathbb{R}$, endowed with the Hausdorff topology. This is well-defined since Zariski closed sets are closed in the analytic topology. 

We claim that $\Phi_+$ is measurable. By Hahn disintegration Theorem \cite[Theorem 2.1]{Hahn}, the slice $\phi^x_+$ depends measurably on $x$. Here we are tacitly viewing $\mathrm{L}^0(\mathcal{B}_+,H/L_+)$ as the space of sections of the measurable bundle of metric spaces $\{L^0(\mathcal{B}^x_+,H/L_+)\}_{x \in X}$. As a consequence, the coefficients of any homogeneous polynomial defining $\mathbf{V}^x_+$ depends measurably on $x$.  Finally, the fact that the zero locus associated to an homogeneous ideal varies measurably with respect to the coefficients guarantees that $\Phi_+$ is measurable. Hence, the claim follows.

Such map $\Phi_+$ is also $\rho$-equivariant, namely
$$\mathbf{V}^{t(g)}_+=\rho(g)\mathbf{V}^{s(g)}_+,$$
where the $H$-action on $\textup{Var}_{\mathbb{R}}(\mathbb{P}^N)$ is induced by the natural left-action of $\textup{GL}(N+1,\mathbb{C})$ on $\textup{Var}_{\mathbb{R}}(\mathbb{P}^N)$. As noticed by Zimmer \cite[Proposition 3.3.2]{zimmer:libro}, the set $\mathrm{Var}_{\mathbb{R}}(\mathbb{P}^N)$ decomposes as a countable union of (subsets of) varieties on which $\mathrm{GL}(N+1,\mathbb{C})$ acts algebraically and hence smoothly \cite[Theorem 3.1.3, Theorem 2.1.14]{zimmer:libro}. By Proposition \ref{prop:reduction} the representation $\rho$ must be similar to a representation preserving some fixed Zariski closed subvariety $\mathbf{V}_0$ of $\mathbf{H}/\mathbf{L}_+$. By Zariski density of $\rho$ we argue that 
$$
\mathbf{V}_0=\mathbf{H}/\mathbf{L}_+,
$$
so almost every slice of $\phi$ is essentially Zariski dense, as claimed. 
\end{proof}

We are now ready to prove the main result of this section.

\begin{rec_thm}[\ref{theorem_furstenberg_map}]
  Let $\rho: \mathcal{G}\rightarrow H$ be a measurable Zariski dense representation of an ergodic groupoid $(\calG,\nu)$ into the real points $\text{H}=\textbf{\textup{H}}(\mathbb{R})$ of a real algebraic group. Let $((\mathcal{B}_-,\theta_-),(\mathcal{B}_+,\theta_+))$ be a boundary pair for $\mathcal{G}$. Then there exist $\mathcal{G}$-equivariant measurable maps $\mathcal{B}_\pm \rightarrow H/P_\pm$, where $P_\pm<H$ are minimal parabolic subgroups.

  \noindent In particular if $(\mathcal{B},\theta)$ is a $\mathcal{G}$-boundary, then there exists an equivariant measurable map $\mathcal{B} \rightarrow H/P$.
\end{rec_thm}

\begin{proof}


By Theorem \ref{thm_equivariant_map} and Remark \ref{oss:fiber:isometrically} there exist $\phi_\pm: \mathcal{B}_\pm \rightarrow H/L_\pm $ and $\phi_0:\mathcal{B}_-*\mathcal{B}_+\rightarrow H/L_0$ initial objects in the category of algebraic representations relative to $\rho$ for $\mathcal{B}_\pm \rtimes \mathcal{G}$ and for $(\mathcal{B}_-*\mathcal{B}_+)\rtimes \mathcal{G}$, respectively. Here $L_\pm=\textbf{L}_\pm(\mathbb{R})<H$, $L_0=\textbf{L}_0(\mathbb{R})<H$ are real points of $\textbf{L}_\pm,\textbf{L}_0<\textbf{H}$ suitable algebraic subgroups. Additionally, the amenability of the groupoids $\calB_\pm \rtimes \calG$ implies that $L_\pm$ are amenable by Theorem \ref{teor:amenable:hull}. 

By looking at the diagram 
\begin{equation}\label{equation_diagram_phi0}
 \begin{tikzcd}
   \mathcal{B}_-*\mathcal{B}_+ \arrow{rr}{\phi_0} \arrow{d}[swap]{p_+} && H/L_0 \arrow[dotted]{d}\\
   \mathcal{B}_+ \arrow{rr}[swap]{\phi} && H/L_+\,,
 \end{tikzcd}   
\end{equation}
the universal property of $\phi_0$ implies the existence of a $\mathcal{G}$-map $H/L_0\rightarrow H/L_+$. Thus, up to conjugation, we obtain $L_0<L_+$. If $R_+\coloneqq \text{Rad}_u(L_+)$ denotes the unipotent radical of $L_+$, we get the chain of inclusion 
$$L_0<L_0R_+<L_+$$
and the associated chain of projections
$$H/L_0\xrightarrow{q'_+} H/L_0R_+\xrightarrow{q_+} H/L_+\,.$$
Consider now the following commutative square
\begin{equation}\label{equation_diagram5}
 \begin{tikzcd}
   \mathcal{B}_-*\mathcal{B}_+\arrow{rr}{\widehat{q'_+\circ \phi_0}}\arrow{d}[swap]{p_+} &&
   H/L_0R_+\times X\arrow{d}{q_+ \times \id_X} \\
  \mathcal{B}_+\arrow{rr}{\widehat{\phi}_+}&& H/L_+\times X\,.
 \end{tikzcd}   
\end{equation}
where $\widehat{q'_+ \circ \phi_0}(b,b')=(q'_+\circ \phi_0(b,b'),t_{\mathcal{B}_+}(b))$ and $\widehat{\phi}_+(b)=(\phi_+(b),t_{\mathcal{B}_+}(b))$. The right vertical arrow admits a fiberwise isometric action, as observed by Bader and Furman \cite[Theorem 3.4]{BF14}. Indeed the fibers $(q_+ \times \mathrm{id}_X)^{-1}(L_+,x)$ can be identified with $L_+/L_0R_+$, and the latter admits a natural $L_+/R_+$-invariant metric. Using the $H$-action we can extend that metric to all the other fibers to get a fiberwise isometric action of $\mathcal{G}$. Therefore, we are in the right setting to apply the relative isometric ergodicity of $p_+$. Precisely, we obtain a $\mathcal{G}$-map $\phi'_+:\mathcal{B}_+\rightarrow H/L_0R_+$ such that $q_+\circ \phi'_+=\phi_+$. On the other hand, being $\phi_+$ an initial object, there exists a right-inverse $\widetilde{q}_+:H/L_+\rightarrow H/L_0R_+$ for $q_+$, which implies that $q_+$ is an isomorphism.
Hence, up to conjugation, we can assume that $L_0R_+=L_+$. The same will hold also in the other case, namely $L_0R_-=L_-$. 

Consider now the map 
$$\phi_-\times \phi_+:\mathcal{B}_-*\mathcal{B}_+\rightarrow H/L_-\times H/L_+\,,\;\;\; (\phi_-\times \phi_+)(b_1,b_2)\coloneqq (\phi_-(b_1),\phi_+(b_2))\,,$$ that fits in the commutative triangle
\begin{center}
 \begin{tikzcd}
 \mathcal{B}_-*\mathcal{B}_+\arrow{rr}{\phi_0}\arrow{ddr}[swap]{\phi_-\times \phi_+} & &H/L_0\arrow[dotted]{ldd}[]{\widetilde{\phi}} \\& &\\
  &H/L_-\times H/L_+&
 \end{tikzcd}
 \end{center}
where the existence of $\widetilde{\phi}$ follows since $\phi_0$ is initial. 
By Lemma \ref{lemma:Zarski:density:slices} almost every slice $\phi^x_\pm$ is Zariski dense in $H/L_\pm$, so the product $\phi^x_-\times \phi^x_+$ is Zariski dense in $H/L_-\times H/L_+$ as well. This implies the Zariski density of the essential image of $\widetilde{\phi}$. Equivalently, $R_-L_0R_+$ is dense in $H$. Hence, by \cite[Lemma 3.5]{BF14} the subgroups $L_+$ and $L_-$ are parabolic and, being amenable, they are also minimal. This concludes the proof.
\end{proof}

\section*{Competing interests}
The authors declare none.

\bibliographystyle{alpha}

\bibliography{biblionote}

\end{document}